\documentclass[12pt,reqno]{amsart}
\usepackage[T1]{fontenc}
\usepackage[utf8]{inputenc}
\usepackage[english]{babel}
\usepackage{amsmath}
\usepackage{amssymb}
\usepackage{amscd}
\usepackage{amsfonts}
\usepackage{amsthm}
\usepackage{bm}
\usepackage{cite}
\usepackage{braket}
\usepackage{graphicx}
\usepackage{marginnote}
\usepackage[margin=1.3in]{geometry}
\usepackage[colorlinks=true, pdfstartview=FitV, linkcolor=darkblue, citecolor=darkred, urlcolor=cyan, linktoc=all]{hyperref}

\usepackage{dsfont} 
\usepackage{pxfonts}

\usepackage{color}
\usepackage{csquotes}

\definecolor{darkred}{RGB}{203,65,84}
\definecolor{darkblue}{RGB}{70,130,180}
\definecolor{brown}{RGB}{139,69,19}

\theoremstyle{plain}
\newtheorem{proposition}{Proposition}[section]
\newtheorem{theorem}[proposition]{Theorem}
\newtheorem{lemma}[proposition]{Lemma}
\newtheorem{corollary}[proposition]{Corollary}

\newcounter{foo}
\newtheorem{theo}[foo]{Theorem}

\theoremstyle{definition}
\newtheorem{definition}[proposition]{Definition}
\theoremstyle{remark}
\newtheorem{remark}[proposition]{Remark}

\numberwithin{equation}{section}

\DeclareMathOperator{\dal}{\Box}

\newcommand{\mk}{\mathfrak}
\newcommand{\bC}{\mathbf{C}}

\newcommand{\bF}{\mathbf{F}}

\newcommand{\mK}{\mathsf{K}}
\newcommand{\G}{\mathsf G}

\newcommand{\bM}{\mathbf M}
\newcommand{\mG}{\mathsf{G}}

\newcommand{\Proj}{\mathbf{P}}

\newcommand{\RR}{\mathbb{R}}

\newcommand{\cH}{\mathcal{H}}
\newcommand{\cM}{\mathcal{M}}
\newcommand{\cP}{\mathcal{P}}
\newcommand{\cG}{\mathcal{G}}

\newcommand{\cF}{\mathcal{F}}

\newcommand{\II}{\mathrm{II}}
\newcommand{\I}{\mathrm{I}}
\newcommand{\III}{\mathrm{III}}
\newcommand{\tr}{\operatorname{tr}}
\newcommand{\bq}{\mathbf{q}}

\newcommand{\cQ}{\mathcal{Q}}
\newcommand{\psld}{\mathsf{PSL}_2(\mathbb R)}

\newcommand{\dS}{\mathbf{dS}^{1,1}}
\newcommand{\Ker}{\mathrm{Ker}}

\newcommand{\D}{{\mathrm D}}
\newcommand{\q}{\mathbf{q}}

\renewcommand{\epsilon}{\varepsilon}

\newcommand{\PO}{\mathsf{PO}}
\newcommand{\SO}{\mathsf{SO}}

\newcommand{\PSL}{\mathsf{PSL}}

\newcommand{\R}{{\mathbb R}}
\newcommand{\sld}{\mathsf{SL}_2(\mathbb R)}
\newcommand{\Rp}{\mathbf{RP}^1}
\newcommand{\Rpn}{\mathbf{RP}}
\newcommand{\AdS}{\mathbf{H}^{2,1}}

\newcommand{\Ein}{\mathbf{Ein}^{1,1}}

\newcommand{\End}{\operatorname{End}}

\newcommand{\Id}{\operatorname{Id}}
\newcommand{\rI}{\mathrm{I}}

\newcommand{\Vol}{\mathbf{Vol}}

\newcommand{\Conf}{\operatorname{Conf}}

\newcommand{\B}{\mathrm{B}}

\newcommand{\Ad}{\operatorname{Ad}}

\newcommand{\vol}{\operatorname{vol}}
\newcommand{\function}[5]{#1 \left\{
    \begin{array}{rcl}
      #2 &\rightarrow& #3\\
      #4 &\mapsto& #5
    \end{array}
  \right.  }
\renewcommand{\leq}{\leqslant}
\renewcommand{\geq}{\geqslant}

\newcommand{\T}{\mathsf T}
\newcommand{\ms}{\mathsf}

\newcommand{\defeq}{\coloneqq}
\newcommand{\eqdef}{\eqqcolon}

\renewcommand{\d}{{\rm d}}

\newcommand{\U}{{\mathsf U}}

\newcommand{\cW}{{\mathcal W}}
\newcommand{\cL}{{\mathcal L}}

\newcommand{\sbt}{\,\begin{picture}(-1,1)(-1,-1)\circle*{2}\end{picture}\ }

\newcommand{\bb}{{\mathrm b}}
\newcommand{\bA}{\mathbf A}

\newcommand{\cS}{\mathcal{S}}

\newcommand{\dx}{\partial_x}
\newcommand{\dy}{\partial_y}
\renewcommand{\dot}[1]{\overset{\sbt}{#1}}
\title[Liouville action]
{Lorentz--Epstein surfaces and a Liouville action for positive curves}
\author[F. Labourie]{Fran\c cois Labourie}
\address{EPF Lausanne, SB-SCI-FL, Station 8, CH-1015 Lausanne,  Switzerland}
\email{francois.labourie@epfl.ch}

\author[J. Toulisse]{J\'{e}r\'{e}my Toulisse}
\address{Universit\'e C\^ote d'Azur, CNRS,  LJAD,  France}
\email{jtoulisse@univ-cotedazur.fr}

\author[Y. Wang]{Yilin Wang}
\address{ETH Z\"urich, Department of Mathematics, Switzerland}
\email{yilin.wang@math.ethz.ch}
\thanks{F.~L.  and J.~T. acknowledge funding by the European Research Council under ERC-Advanced grant 101095722. J.~T. acknowledges the support of the Institut Universitaire de France.  Y.~W. is supported by the Swiss State Secretariat for Education, Research and Innovation (SERI): MB25.00004. F.~L. is supported by the Swiss State Secretariat for Education, Research and Innovation (SERI): MB25.00031.}

\newcommand{\VB}{\operatorname{VB}}

\date{\today}
\AtBeginDocument{%
   \def\MR#1{}
}

\begin{document}
\begin{abstract}
We investigate and define in this paper, in the context of the correspondence between anti-de Sitter $3$-space and $(1,1)$-conformal metrics, the analogs of $\cW$-volume, Epstein surfaces, and Liouville action.  These notions were well-studied in the correspondence between $3d$-hyperbolic manifolds and $2d$ conformal metrics. We apply our construction to positive curves in flag manifolds equipped with a positive structure to obtain invariants of these curves that are finite in the case of {\em piecewise circles}.
\end{abstract}

\maketitle

\section{Introduction}

Renormalized volume is motivated by the holographic principle of AdS/CFT correspondence in String Theory discussed by C. Robin Graham and Edward Witten as well as Kirill Krasnov in  \cite{Graham_witten,krasnov2000holography}, allowing one to renormalize the volume of an infinite volume Einstein manifold using a truncation procedure determined by a metric on the conformal boundary. 

In the mathematics literature, the most studied case is the correspondence between convex co-compact hyperbolic 3-manifolds $P$, with conformal boundary $\partial_\infty P$ consisting of Riemann surfaces. The renormalized volume is defined as the $\cW$-volume of a submanifold $P_g$ obtained by ``truncating'' $P$ using a surface introduced by Charles Epstein \cite{epstein-envelopes} determined by a choice of the conformal metric $g$ on $\partial_\infty P$ \cite{KrasnovSchlenker_CMP,bridgeman2024epstein}:
\begin{align}
	\cW (P, g) := \vol (P_g)- \frac{1}{2} \int_{\partial P_g} H \ \d a \ ,\label{eq:cW-class}
\end{align}
where $\vol$ is the volume form of $P$, $H$ is the mean curvature, and $\d a$ the area form on $\partial P_g$ induced from $P$.

The case of $2+1$ dimensions was particularly interesting, as it was shown by Kirill Krasnov and Jean-Marc Schlenker in \cite{krasnov2000holography,KrasnovSchlenker_CMP} that the $\cW$-volume holographically expresses the Liouville action of Leon Takhtajan and Peter Zograf \cite{TZ87} on the conformal boundary. In particular, its Weyl anomaly follows Polyakov's formula, 
$$\cW (P,g) - \cW (P, e^{2\sigma} g) \propto \int_{\partial_\infty P} \left(\tfrac{1}{2} |\nabla \sigma|^2 + K_g \sigma \right) \ \d \vol_g \ , $$
where $K_g$ is the Gau\ss curvature of $g$. The right-hand side is proportional to the Liouville action $\cS_g(\sigma)$ with zero cosmological constant in the physics literature.

One of the first motivations for studying the Liouville action is the uniformization theorem. 
In fact, within the same conformal class of fixed area metrics, the critical point of the Liouville action is that associated with the constant curvature metric in $\partial_\infty P$ (i.e., the hyperbolic metric if the area is well-chosen).  Moreover, as a function on the Teichm\"uller space (i.e., when we vary the hyperbolic structure on $\partial_\infty P$), the $\cW$-volume turns out to be a K\"ahler potential for the Weil--Petersson metric (see Kirill Krasnov and Jean-Marc Schlenker as well as Leon Takhtajian and Lee-Peng Teo \cite{KrasnovSchlenker_CMP,Takhtajan:2003ur}) on Teichm\"uller space of the boundary. See the recent survey of Schlenker in \cite{schlenkerICM}. 

The $\cW$-volume can also be used to describe the geometry of Jordan curves on the Riemann sphere $\bf{CP}^1 = \partial_\infty \mathbf{H}^3$. In fact, each Jordan curve $\gamma$ determines two Epstein surfaces, meeting at $\gamma$, determined by the hyperbolic metrics on the connected components $\bf{CP}^1 \setminus \gamma$. The $\cW$-volume of the $3$-manifold between these two surfaces is proportional to the universal Liouville action (introduced by Leon Takhtajian and Lee-Peng Teo \cite{Takhtajan_Teo06}) of the curve studied by Martin Bridgeman, Kenneth Bromberg, Franco Vargas Pallete, and Yilin Wang in \cite{Bridgeman:2025aa}, which also has a deep link to the theory of random curves SLE by Yilin Wang \cite{Wang_equivalent,Wang_AMS}.
We also mention that the definition of Epstein (hyper)surfaces is not limited to $2+1$ dimensions. In fact, even in the simpler $1+1$ dimension, the ``renormalized area'' of the hyperbolic disk truncated by the Epstein curve coincides with the Schwarzian action (see Franco Vargas Pallete, Yilin Wang, and Catherine Wolfram in \cite{VWW}).

\bigskip

Nevertheless, the construction of Epstein hypersurfaces and renormalized volume is only studied in the Riemannian setup (i.e., in hyperbolic spaces, which are called Euclidean anti-de Sitter spaces by physicists).

\bigskip The goal of the present paper is to explore the definition of Epstein surfaces associated with a conformal metric of type $(1,1)$ and the corresponding $\cW$-volume and Liouville action for the Lorentzian anti-de Sitter space $\AdS$: while the hyperbolic 3-space is replaced by the $(2,1)$ anti-de Sitter space $\AdS$, we consequently replace $\bf{CP}^1$, with its conformal structure -- geometrically the boundary at infinity of the hyperbolic 3-space -- by the {\em Einstein Universe} $\Ein$, which is topologically a 2-dimensional torus and which has a conformal structure of type $(1,1)$. We will therefore proceed by analogy, as in the Riemannian case, to define Epstein surfaces and the $\cW$-volume in that context. Our construction gives rise to an invariant for positive curves, curves which are objects of interest by Jonas Beyrer, Olivier Guichard, François Labourie, Beatrice Pozzetti, and Anna Wienhard \cite{Beyrer:2024aa,Guichard:2025ab}, in particular for smooth hyperconvex curves in real projective spaces studied by Vladimir Fock and Alexander Goncharov as well as Fran\c{c}ois Labourie in \cite{Fock:2006a,Labourie:2006}. 

\bigskip

\noindent {\sc Epstein surfaces. } We first focus on the following situation. Let $(S,g)$ be a Lorentzian surface, then we define an {\em equivariant immersion} of $(S,g)$ in $\Ein$ as a pair $(\phi,\rho)$, 
\begin{enumerate}
	\item where $\phi$ is a conformal immersion from the universal cover of $(S,g)$ to $\Ein$,
	\item and  $\rho$ is  a representation of the fundamental group of $\pi_1(S)$ to $\psld\times\psld$ such that $\phi\circ\gamma=\rho(\gamma)\circ\phi$, for any element $\gamma$ in $\pi_1(S)$.
\end{enumerate} 
We remark that while this condition is satisfied locally, it does not need to  hold globally: as observed by Ravindra S. Kulkarni in \cite{Kulkarni:1985aa}, the global geometry of conformal Lorentz surfaces is considerably more subtle than their Riemannian analogue.

We first construct the Lorentzian analogs of Epstein surfaces. More precisely, we show in Theorem~\ref{theo:isotr-metr}:

\begin{theo} Let $(S,g)$ be a Lorentzian surface, and $\phi$ a conformal immersion from $(S,g)$ to $\Ein$ satisfying a topological hypothesis. Then there exists a  unique  holonomic surface $\Sigma_g$ in the space of tangent vectors of norm $1$ in $\AdS$, whose first fundamental form at infinity is $g$.
\end{theo}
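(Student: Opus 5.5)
Following Epstein's construction, the plan is to build $\Sigma_g$ as an envelope of horospheres.

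\emph{Step 1: horospheres.} Write $\AdS$ as the projectivised negative cone of a quadratic form $q$ of signature $(2,2)$ on $\R^4$, and $\Ein$ as the projectivised null cone. A nonzero null vector $v$ in the cone over $x\in\Ein$ determines a horosphere
\[
H_v=\{y\in\AdS : \langle y,v\rangle=-1\}.
\]
Rescaling $v$ by $e^{t}$ changes the induced "visual" conformal metric at $x$ by $e^{2t}$. Hence the choice of a metric in the conformal class at $x$ is the same as the choice of a lift $v(x)$ in the null cone.

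So $g$, together with $\phi$, gives a lift $\hat\phi\colon\tilde S\to\R^{2,2}$ of $\phi$ into the null cone, defined by requiring that $\hat\phi^*q$-type data reproduce $g$. Concretely, $\langle \d\hat\phi,\d\hat\phi\rangle=g$.

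\emph{Step 2: envelope.} Impose tangency of the family $\{H_{\hat\phi(s)}\}$ at a point $y(s)$. This means $\langle y,\hat\phi\rangle=-1$ and $\langle y,\d\hat\phi\rangle=0$, with $\langle y,y\rangle=-1$. Together with the unit normal $n$ to the horosphere at $y$, we get a point of the unit tangent bundle. The normal should be $n=y+\hat\phi$ (up to sign convention), which is unit spacelike or timelike as required.

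Since $\hat\phi$ and $\d\hat\phi$ span a degenerate 3-space $\hat\phi^{\perp}$, these are linear conditions. The unique solution is
\[
y=-\tfrac12\,\ell\,\hat\phi-\,\hat\phi^{\ast}+\ldots,
\]
where $\hat\phi^{\ast}$ is the null vector complementary to the plane $\mathrm{span}(\d\hat\phi)$. Equivalently, $y$ is obtained by solving in the frame $(\hat\phi,\partial_1\hat\phi,\partial_2\hat\phi,N)$, using the Laplacian of $\hat\phi$ for $g$. This frame is non-degenerate precisely because $\phi$ is an immersion and $g$ is non-degenerate; the Lorentzian signature only changes signs. This gives existence and uniqueness of the pair $(y,n)$, and the map $s\mapsto(y(s),n(s))$ is smooth and $\rho$-equivariant by naturality.

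\emph{Step 3: holonomic and boundary data.} Check that $\Sigma_g$ is holonomic, i.e.\ that $n$ is orthogonal to $\d y$. This follows by differentiating $\langle y,\hat\phi\rangle=-1$ and using $\langle y,\d\hat\phi\rangle=0$, which gives $\langle \d y,\hat\phi\rangle=0$, and hence $\langle \d y,n\rangle=0$. Then identify the first fundamental form at infinity with $g$. Here I would use the renormalised pullback
\[
\lim_{t\to\infty} e^{-2t}\,\mathrm{I}_t,
\]
computed for the geodesic flow $(y\cosh t+n\sinh t)$, which tends to $\langle \d\hat\phi,\d\hat\phi\rangle=g$.

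For uniqueness, I would run the argument backwards. A holonomic surface with these boundary data has a Gauss map at infinity $y+n$ that is null and projects to $\phi$. The normalisation of its first fundamental form at infinity forces this lift to be $\hat\phi$, and Step 2 then forces $(y,n)$.

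\emph{Main obstacle.} I expect the main obstacle to be the topological hypothesis. The Gauss map at infinity must actually coincide with $\phi$, rather than with another lift or a "dual" choice: in Lorentzian signature, the light-cone structure admits two boundary maps related to the two null foliations. The hypothesis is presumably what ensures the relevant lift to the double cover, or the orientation of the null cone, is globally consistent on $\tilde S$. I would handle this by working locally, where everything is canonical, and then using the hypothesis to glue.
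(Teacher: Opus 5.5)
Your proposal is correct in substance and is essentially the paper's argument. The core is Theorem~\ref{theo:isotr-metr}: a lift $\sigma$ of $\phi$ to the null cone with $\langle\D\sigma,\D\sigma\rangle=g$ exists and is unique up to sign, because $\langle\sigma,\D\sigma\rangle=0$ makes $e^f\sigma$ induce $e^{2f}\langle\D\sigma,\D\sigma\rangle$. The topological hypothesis is only the triviality of $\phi^*\tau$, which lets one fix that sign globally, so there is no further obstacle of the kind you anticipate. Your envelope equations are equivalent to the paper's construction of the dual isotropic surface $\eta$ and of $(x,n)=\tfrac{\sqrt2}{2}(\sigma-\eta,\sigma+\eta)$; the paper proves the envelope property separately.

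Only the constants are off. At your level $\langle y,\hat\phi\rangle=-1$ the unit normal is $n=\hat\phi-y$, since your $y+\hat\phi$ has norm $-3$ and is not orthogonal to $y$. With that $n$ one gets $\lim e^{-2t}\I_t=\tfrac14\langle\D\hat\phi,\D\hat\phi\rangle$, not $g$. To recover $g$, take, as the paper does, horospheres at level $-\tfrac{\sqrt2}{2}$ and define $\I^*=\tfrac12\langle\D(x+n),\D(x+n)\rangle$, i.e.\ twice your limit.
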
	
The terminology of this theorem requires some explanation and definitions that are given in the main part of the paper. For the moment, we just make the following remarks:
\begin{itemize}
	\item The topological hypothesis says that the pullback of a geometrically natural line bundle is trivial; this is made explicit in Theorem~\ref{theo:isotr-metr}. 
	\item We explain what a holonomic surface is in Section \ref{sec:holon}. For the sake of this introduction, we just say that a typical example of a holonomic surface is the set of normal vectors $n(S_g)$ to a surface $S_g$ of type $(1,1)$ in $\AdS$.
	\item In this example of typical holonomic surface, the first fundamental form at infinity is $\frac{1}{2}(\I +2\II +\III)$ where $\I$, $\II$ and $\III$ are respectively the first, second, and third fundamental forms of $S_g$ as we shall see in Proposition~\ref{pro:first-infinity}.
	\item These results extend immediately to equivariant conformal immersions thanks to the uniqueness part. 
\end{itemize}
The precise statement of the theorem gives more information on the uniqueness result.

We explain what this theorem means in terms of the envelope of AdS horospheres in paragraph \ref{sec:env}, recovering a classical feature of Epstein surfaces.  This result is the exact analog of the theorem giving Epstein surfaces in the Euclidean case \cite{epstein-envelopes}.

\bigskip

\noindent {\sc $\cW$-volume and 3-manifold in $\AdS$: }
Once Epstein surfaces are defined, we can proceed to the definition of the $\cW$-volume. 
 The $\cW$-volume is an invariant $\cW(M,\phi,\rho)$ for an equivariant map $(\phi,\rho)$ of a 3-manifold $M$ in  the space of unit spacelike vectors in $\AdS$, denoted $\U_+\AdS$ which means that 
\begin{enumerate}
	\item the map  $\phi$ is  a smooth map from the universal cover of $M$ to $\U_+\AdS$,
	\item and  $\rho$ is  a representation of the fundamental group of $\pi_1(M)$ to $\psld\times\psld$ such that $\phi\circ\gamma=\rho(\gamma)\circ\phi$, for any element $\gamma$ in $\pi_1(M)$.
\end{enumerate} 
We assume furthermore that 
\begin{enumerate}
	\item The map $\phi$ is an immersion when restricted to the lift of $\partial M$ to the universal cover of $M$,
	\item there is a compact set $K$ of $M$ such that $M\setminus K=S\times [0,1]$, where $S$ is a surface, and $\phi$ restricted to $S\times [0,1]$ does not depend on the second factor. 
\end{enumerate}  When $M$ is a product $M=N\times[0,1]$, this means in particular, up to some homéomorphism of $M$, that $\phi_0$ and $\phi_1$ from $N$ to $\U_+\AdS$ are "equal outside a compact set",  where $\phi_i(s)=\phi(s,i)$.

\bigskip 
In the case where the  boundary surface of $M$ is the set of normal vectors to an  (equivariantly) immersed  $(1,1)$ surface $\partial P$, hence bounding a 3-manifold $P$ smoothly mapped in $\AdS$ --- see paragraph \ref{sec:equal-at-infinity} --- we show in Proposition~\ref{pro:class-formula}

\begin{theo} The $\cW$-volume satisfies
	$$
\cW(M,\phi,\rho)=\Vol(P)- \frac{1}{2}\int_{\partial P}H\  \d a\ ,
$$
where $H$ is the mean curvature of $\partial M$ in $\AdS$ and $a$ its volume form.
\end{theo}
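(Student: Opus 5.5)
The plan is to unwind the definition of $\cW(M,\phi,\rho)$ as an integral over $M$ of a pulled-back $3$-form on $\U_+\AdS$ and reduce it, by Stokes, to a bulk volume term and a boundary term. I expect $\cW$ to be defined as $\int_M \phi^*\omega$ for a $\psld\times\psld$-invariant $3$-form $\omega$ on $\U_+\AdS$. This form should satisfy $\omega=\pi^*\vol+\d\beta$, where $\pi\colon\U_+\AdS\to\AdS$ is the footpoint projection and $\beta$ is an invariant $2$-form. On a holonomic surface $n(S)$, the pullback of $\beta$ should give, up to sign, $-\frac12 H\,\d a$ (plus possibly a term that is exact or vanishes). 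The assumption that $\phi$ does not depend on the second factor on $S\times[0,1]$ guarantees that the integral is finite and that only the genuine boundary contributes.

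The first step is to write down the invariant contact-type structure on $\U_+\AdS$, viewed as the unit tangent bundle restricted to spacelike vectors. I would identify $T\U_+\AdS$ with horizontal and vertical parts via the Levi-Civita connection. The tautological $1$-form $\alpha_{(x,v)}(\xi)=\langle \d\pi(\xi),v\rangle$ vanishes on holonomic surfaces. I would then look for $\beta$ as an invariant $2$-form built from the horizontal and vertical components and contracted with $v$. The natural candidate is $\beta(\xi_1,\xi_2)=c\,\vol(v,\d\pi\xi_1,\nabla_{\xi_2}v)$, antisymmetrized, together with possibly a term $\vol(v,\d\pi\xi_1,\d\pi\xi_2)$.

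The second step is to compute $\d\beta$ using the curvature $-1$ of $\AdS$. The curvature term should produce exactly $\pi^*\vol$ modulo terms that are multiples of $\alpha$ or $\d\alpha$, or that vanish on $3$-dimensional images. Choosing the constant $c$ correctly recovers the defining relation for $\omega$. If $\omega$ was instead defined directly, for instance through pulling back to $S\times[0,1]$, I would instead check that $\omega-\pi^*\vol$ is exact with primitive $\beta$.

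The third step handles the bulk and boundary terms. Equivariance lets me work on a fundamental domain, or pass to the quotient, since everything is invariant. On $n(\partial P)$ at $n(x)$, the vertical derivative is $\nabla n=-B$, where $B$ is the shape operator. Then $\beta$ restricted to $T(n(\partial P))$ gives $c\,\vol(n,e_1,-Be_2)-c\,\vol(n,e_2,-Be_1)=-c\,\tr B\,\d a=-2cH\,\d a$. With $c=\frac14$ and the paper's normalization $H=\frac12\tr B$, this yields the term $-\frac12 H\,\d a$. For the bulk term, I would homotope $\phi$ rel boundary to a map factoring through $P$, since $\pi\circ\phi$ bounds $P$. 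Then $\int_M\phi^*\pi^*\vol=\Vol(P)$ holds by degree and homotopy invariance of integrals of closed top forms, with the behavior at the product end controlled.

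The main obstacle is the second step: getting the exact invariant primitive and the sign and normalization conventions for $H$ right in Lorentzian signature. Orientation of normals and the signature of the $(1,1)$ surface can flip signs, and one must check that $\beta$ is genuinely $\psld\times\psld$-invariant so that the equivariant integral is well defined. I would first verify the computation on an explicit example, such as equidistant surfaces to a spacelike totally geodesic plane, to pin down $c$.
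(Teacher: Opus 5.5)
Your proposal is essentially the paper's argument. The paper defines $\cW(M,\phi)=\int_M\phi^*\omega-\int_{\partial M}\phi^*\alpha$ directly, with $\omega=\pi^*\vol_{\AdS}$ and $\alpha(u,v)=\frac14\bigl(\det(x,n,u_2,v_1)+\det(x,n,u_1,v_2)\bigr)$; this $\alpha$ is exactly your antisymmetrized $\beta$ with $c=\frac14$, so the bulk term is $\Vol(P)$ by definition, and the whole proof is your step-3 computation that on the holonomic lift (tangent vectors $(u,B(u))$) one gets $\alpha=\frac14\tr(B)\,\d a$. Consequently your steps 1--2 (computing $\d\beta$ via curvature) and the homotopy argument for the bulk term are unnecessary, and the only care needed is matching signs: the paper subtracts the boundary term and uses $(u,B(u))$, while you add it and take $\nabla n=-B$, and both give $-\frac12\int H\,\d a$.
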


Observe the perfect parallel with equation \eqref{eq:cW-class} for the $\cW$-volume in the Hyperbolic context as proved by Kirill Krasnov and Jean-Marc Schlenker in  \cite{KrasnovSchlenker_CMP}.

We then prove the variational formula --- Theorem \ref{theo:VariationalFormula} --- for this $\cW$-volume, from which we draw two conclusions. 

\begin{enumerate}
	\item The $\cW$-volume only depends on the first fundamental form at infinity on $\partial M$. 
	\item Surfaces whose first fundamental form at infinity has constant curvature are critical points of this Liouville action with respect to compactly supported deformations: Corollary \ref{cor:critical_pt}.
	\end{enumerate} 
\bigskip
\noindent{\sc Liouville Action:}
We say two conformal Lorentzian metrics $g$ and $h$ on $S$ are {\em equal outside a compact set} when  $h=e^{2u} g$ with $u$ of compact support.  \begin{definition}
	Let $g$ and $h$ be two conformal metrics equal outside a compact set with $h=e^{2u}g$. The Liouville action of $(g,h)$ is then defined as
	\begin{align}
		\cS(g,h)= \frac{1}{2}\int_S u F_g - \frac{1}{4}\int_S u\ \d (\d u\circ \I)~,\label{def:Liouville-action-eq}
	\end{align}
	where   $F_g$ is the curvature form of $g$ and $\I$ is the {\em split involution} on $\T S$ whose eigenspaces define the light-like directions.
	\end{definition}
Our definition of Liouville action is proportional to the Lorentzian Liouville action in the physics literature with zero cosmological constant (or among metrics of the same area). See Remark~\ref{rem:liouville_physics} and, e.g., Joerg Teschner \cite{Teschner:2001ds}. 

It is also a perfect analog of the Liouville action defined by the Polyakov formula for conformal Riemannian metrics when one replaces the split involution $\I$ with the complex structure $J$.

The Liouville action is then shown to have the following properties in Proposition \ref{prop:liouville_expression}.

\begin{theo}\label{theo:B}
		Let $g$, $h$ and $k$ be metrics equal outside a compact set and let $u$ be of compact support  defined by $h=e^{2u}g$. Then
		\begin{align}
	\cS(g,h)&=\cS(g,k)+\cS(k,h)\ , \ \ \hbox{\sc(Chasles Formula)}\label{eq:intro-B1}\\
	\cS(g,h)&=\frac{1}{4}\int_S u(F_g+F_h)\ . \ \hbox{\sc(Monotonicity Formula)}
\end{align}
We have the {\em variational formula}: for a family of metrics $h_t=e^{2u_t}g$
	$$
	\left.\frac{\rm d}{\rm d t}\right\vert_{t=0}\cS(g,h_t)=\frac{1}{2}\int_S\dot u\  F_{g}\ ,
	$$ 
	where $\dot u$ is the function defined by $\dot u  =\left.\frac{\rm d}{\rm d t}\right\vert_{t=0}u_t$.	
	\end{theo}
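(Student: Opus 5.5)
The whole argument rests on the Lorentzian analogue of the conformal change of curvature. For $h=e^{2u}g$ one expects
$$F_h=F_g-\d(\d u\circ \I),$$
with the sign fixed by the convention implicit in the definition of $\cS$. Indeed, the definition is built so that this is the exact analogue of the Riemannian identity $K_h\,\d a_h=K_g\,\d a_g-\d(\d u\circ J)$. I would first check this identity once, in null coordinates $(x,y)$ with $g=e^{2\varphi}\d x\,\d y$. In these coordinates $\d u\circ \I=\pm(u_x\d x-u_y\d y)$, so $\d(\d u\circ\I)=\mp 2u_{xy}\,\d x\wedge\d y$. Likewise $F_g$ is a multiple of $\varphi_{xy}\,\d x\wedge \d y$, and replacing $\varphi$ by $\varphi+u$ gives the formula. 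I expect this normalisation (signs and factors) to be the only real obstacle; everything after it is formal.

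Next, the monotonicity formula. Substituting $\d(\d u\circ\I)=F_g-F_h$ into the definition gives
$$\cS(g,h)=\frac12\int_S uF_g-\frac14\int_S u(F_g-F_h)=\frac14\int_S u(F_g+F_h).$$

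For the variational formula, apply Stokes' theorem using that $u$ has compact support. This gives the symmetry
$$\int_S v\,\d(\d w\circ\I)=-\int_S \d v\wedge(\d w\circ\I),$$
and the right-hand side is symmetric in $v,w$: for the split involution, $\d v\wedge(\d w\circ \I)$ equals $\pm(v_xw_y+v_yw_x)\,\d x\wedge\d y$. Differentiating the quadratic term $-\frac14\int_S u_t\,\d(\d u_t\circ\I)$ therefore yields $-\frac12\int_S \dot u\,\d(\d u\circ\I)$ at $t=0$. Together with the linear term this gives
$$\left.\frac{\rm d}{\rm d t}\right\vert_{t=0}\cS(g,h_t)=\frac12\int_S \dot u\,\bigl(F_g-\d(\d u\circ\I)\bigr)=\frac12\int_S\dot u\,F_{h_0}.$$
When $h_0=g$, i.e.\ $u_0=0$, this is $\frac12\int_S\dot u\,F_g$, as stated. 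The general base point is recovered from the Chasles formula.

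For the Chasles formula, write $k=e^{2v}g$ and $h=e^{2w}k$, so that $u=v+w$, and expand
$$\cS(g,h)=\frac12\int_S (v+w)F_g-\frac14\int_S (v+w)\,\d(\d(v+w)\circ\I).$$
The pure $v$ terms give $\cS(g,k)$. The terms linear in $w$ are
$$\frac12\int_S wF_g-\frac14\int_S w\,\d(\d v\circ\I)-\frac14\int_S v\,\d(\d w\circ\I).$$
By the symmetry above, the two cross terms combine to $-\frac12\int_S w\,\d(\d v\circ\I)$. Using $F_k=F_g-\d(\d v\circ\I)$, the linear terms then become $\frac12\int_S wF_k$. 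Adding the quadratic term in $w$ gives exactly $\cS(k,h)$, which proves the Chasles formula.
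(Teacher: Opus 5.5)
Your proof is correct and is essentially the paper's argument: you use the conformal change identity $\d(\d u\circ \I)=F_g-F_h$ (Proposition~\ref{pro:ConformalChange}) together with the Stokes symmetry of $(v,w)\mapsto\int_S v\,\d(\d w\circ\I)$, which gives Chasles as in the paper. Monotonicity then follows by direct substitution, which the paper itself notes as an alternative to deducing it from Chasles via $\cS(g,h)=\tfrac12(\cS(g,h)-\cS(h,g))$. Your variational computation at a general base point, giving $\tfrac12\int_S\dot u\,F_{h_0}$, is a harmless refinement. The paper's one-line argument uses $u_0=0$, which the statement implicitly assumes, and in that case the quadratic term has zero derivative without needing the symmetry.
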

	We also prove that constant curvature surfaces are exactly the critical points of the Liouville action for area-preserving deformations. More precisely, let us say that an {\em area-preserving deformation} of $g_0$ is a 1-parameter family of conformal metrics $(g_t)_{t\in\mathbb R}$ on $S$, equal outside a compact set $K$ satisfying 
	$$
	\left.\frac{\d}{\d t}\right\vert_{t=0}\vol_{g_t}(K)=0\ ,
	$$ 
As an immediate corollary of the variational formula

 \begin{theo}	If $(g_t)_{t\in\mathbb R}$ is a one-parameter family of area-preserving deformations such that $g_0$ has constant curvature, then  		
        $$
		\left.\frac{\d}{\d t}\right\vert_{t=0}\cS(g_0,g_t)=0\ .
	$$
	Conversely, if for any area-preserving variation of metrics, 
    $$
		\left.\frac{\d}{\d t}\right\vert_{t=0}\cS(g_0,g_t)=0\ ,
	$$
	then $g_0$ has constant curvature.
\end{theo}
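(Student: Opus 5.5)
We will deduce both directions from the variational formula of Theorem \ref{theo:B}, rewriting the area-preserving constraint in terms of $\dot u$. Write $g_t=e^{2u_t}g_0$ with $u_0=0$ and $u_t$ supported in $K$; then $\dot u=\left.\frac{\d}{\d t}\right\vert_{t=0}u_t$ has compact support in $K$. Since $S$ is a surface, $\vol_{g_t}=e^{2u_t}\vol_{g_0}$. Differentiating at $t=0$ gives
$$
\left.\frac{\d}{\d t}\right\vert_{t=0}\vol_{g_t}(K)=2\int_K \dot u\ \d\vol_{g_0}\ .
$$
So a deformation is area-preserving exactly when $\int_S\dot u\,\d\vol_{g_0}=0$. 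Conversely, every compactly supported $\dot u$ with $\int_S\dot u\,\d\vol_{g_0}=0$ arises this way, for instance from $u_t=t\dot u$.

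Next we express the curvature form as $F_{g_0}=K_{g_0}\,\d\vol_{g_0}$, where $K_{g_0}$ is the Gau\ss\ curvature; we use the normalization of the curvature form fixed earlier in the paper, and any sign convention only changes an overall constant. The variational formula then reads
$$
\left.\frac{\d}{\d t}\right\vert_{t=0}\cS(g_0,g_t)=\frac12\int_S \dot u\, K_{g_0}\,\d\vol_{g_0}\ .
$$
If $K_{g_0}\equiv c$ is constant, the right-hand side equals $\frac c2\int_S\dot u\,\d\vol_{g_0}$. This vanishes by the area-preserving condition, which proves the first assertion.

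For the converse, assume $\int_S \dot u\,K_{g_0}\,\d\vol_{g_0}=0$ for every compactly supported $\dot u$ with zero $g_0$-integral. This is the usual Lagrange multiplier argument. Suppose $K_{g_0}$ takes two different values at points $p$ and $q$. Choose nonnegative bump functions $\beta_p$ and $\beta_q$ with small disjoint supports near $p$ and $q$ and with equal $g_0$-integrals, and set $\dot u=\beta_p-\beta_q$. This $\dot u$ has zero $g_0$-integral, while $\int_S\dot u\,K_{g_0}\,\d\vol_{g_0}$ is approximately a positive multiple of $K_{g_0}(p)-K_{g_0}(q)$, which is nonzero by continuity. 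This contradicts the hypothesis, so $K_{g_0}$ is constant.

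The only point needing care is the identification of $F_{g}$ with a multiple of $K_g\,\d\vol_g$ for Lorentzian metrics, since the argument rests on it. This should follow directly from the definition of the curvature form given earlier in the paper. Apart from that, the proof is a routine consequence of the variational formula.
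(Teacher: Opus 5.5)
Your proposal is correct and takes the same route as the paper, which calls the statement an immediate corollary of the variational formula $\frac{\mathrm d}{\mathrm dt}\big\vert_{t=0}\cS(g_0,g_t)=\frac12\int_S \dot u\,F_{g_0}$ and gives no further argument. Your first-order area constraint $\int_S\dot u\,\omega_{g_0}=0$ and your bump-function (Lagrange multiplier) argument for the converse fill in exactly the omitted details. The point you flagged needs no care: the paper defines the curvature form as $F_{g}\defeq K(g)\,\omega_{g}$, so the identification holds by definition.
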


Finally, we can relate the Liouville action to the $\cW$-volume in the following case. Assume 
\begin{enumerate}
	\item that $\{g_t\}_{t\in [0,1]}$ is a one-parameter family of conformal Lorentzian metrics on $S$,
	\item we have an equivariant immersion $\phi$ of $S$ in $\Ein$ for $t$ in $[0,1]$
\end{enumerate} 

Let then $\{(\phi_t,\rho)\}_{t\in [0,1]}$ the  family of equivariant immersions of $S$ in $\U_+\AdS$ which are  equivariant Lorentz--Epstein surfaces for $\phi$ and  
$\{\phi_t\}_{t\in[0,1]}$. Let finally $M=S\times[0,1]$ and $\psi:M\to\U_+\AdS$ where by abuse of language $\psi$ is a $\rho$-equivariant map defined by 
$$
\psi(s,t)=\phi_t(s)\ ,
$$ then  we prove in Theorem \ref{theo:cScW}:
  \begin{theo}
We have 
$$
\cS(g_0,g_1)=\cW(M,\psi,\rho)\ .
$$
\end{theo}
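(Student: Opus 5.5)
The strategy is to show that both sides vanish at $t=0$ and have equal $t$-derivatives. Set $M_\tau=S\times[0,\tau]$ and $\psi_\tau=\psi|_{M_\tau}$. Consider
$$
f(\tau)=\cS(g_0,g_\tau)\quad\text{and}\quad w(\tau)=\cW(M_\tau,\psi_\tau,\rho)\ .
$$
Both vanish at $\tau=0$. For $\cS$ this holds because $u=0$. For $\cW$ it holds because $M_0$ is degenerate: the volume term vanishes and the boundary contributions from the two copies of $S\times\{0\}$ cancel.

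It therefore suffices to prove $f'(\tau)=w'(\tau)$ for every $\tau$. For the Liouville side, the Chasles formula of Theorem~\ref{theo:B} gives $\cS(g_0,g_{\tau+s})=\cS(g_0,g_\tau)+\cS(g_\tau,g_{\tau+s})$. The variational formula, applied with base metric $g_\tau$, then yields
$$
f'(\tau)=\frac12\int_S \dot u_\tau\, F_{g_\tau}\ ,
$$
where $g_{\tau+s}=e^{2u_{\tau+s}}g_\tau$ and $\dot u_\tau=\partial_s u_{\tau+s}|_{s=0}$. This function has compact support by assumption.

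For the $\cW$ side, I would use the variational formula for the $\cW$-volume (Theorem~\ref{theo:VariationalFormula}). Moving the boundary component $S\times\{\tau\}$ of $M_\tau$ through the family of Lorentz--Epstein surfaces $\phi_\tau$ changes $\cW$ by a boundary integral. That integral is expressed through the first fundamental form at infinity $g_\tau$ (by Theorem~\ref{theo:isotr-metr}) and the variation $\dot\phi_\tau$. The support hypothesis on $M$ holds because the $g_t$ agree outside a compact set: by the uniqueness part of Theorem~\ref{theo:isotr-metr}, which is local, the Epstein surfaces agree there as well.

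The key step is then to identify the variation of the Epstein surface induced by a conformal change $\dot u$ of the metric at infinity. This means computing, in the horosphere-envelope description of \S\ref{sec:env}, how $\phi_\tau$ moves when $g_\tau$ is replaced by $e^{2s\dot u}g_\tau$. I expect the normal component of the variation to be essentially $\dot u$ (via the shift of horospheres) plus a tangential term. The tangential term should drop out after integration by parts, using compact support. Substituting this into the boundary integrand, which involves the second-fundamental-form data of the Epstein surface, should produce exactly $\frac12\dot u\,F_{g_\tau}$. The mechanism is the usual Gauss-equation identity relating the curvature of $\frac12(\I+2\II+\III)$ to the curvature and mean-curvature terms of the surface.

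This last identification, matching the $\cW$ variational integrand with $\frac12\dot u F_{g_\tau}$, is the main obstacle. I would first check it in the model case where $g_\tau$ is flat and $\phi$ is the standard embedding, and then in general use the expression of Proposition~\ref{pro:first-infinity}. Once the derivatives agree, integrating from $0$ to $1$ gives $\cS(g_0,g_1)=\cW(M,\psi,\rho)$.
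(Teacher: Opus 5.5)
Your argument is essentially the paper's proof. Both sides vanish for the trivial cobordism, their $t$-derivatives agree by Chasles plus the variational formula for $\cS$ on one side and Theorem~\ref{theo:VariationalFormula} on the other, and one then integrates from $0$ to $1$.

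The step you single out as the main obstacle is not one. Theorem~\ref{theo:VariationalFormula} already states $\frac{\d}{\d t}\cW=-\tfrac12\int_{\partial M}uF_g$ with $g$ the first fundamental form at infinity. The paper obtains this from Lemma~\ref{lem:var-se} and the pointwise identity $\iota_\zeta(\omega-\d\alpha)=\tfrac12 u\,\d\beta$, in which the tangential part of the variation drops out pointwise rather than after integration by parts. So no horosphere or Gauss-equation computation is needed. You only have to note that the moving component $S\times\{\tau\}$ enters $\partial M$ with reversed orientation, which turns $-\tfrac12$ into $+\tfrac12$.
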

\bigskip 
\noindent{\sc $\cS$-class and Liouville action for the split Annulus: }
	So far, this discussion has been concerned with general conformal Lorentzian surfaces and associated metrics that coincide outside a compact set.  We now go beyond this discussion. 
    First, we restrict our discussion to the case of the split annulus $\bA$, conformal to $\dS$ -- see paragraph \ref{sec:dA}. We then extend it to a larger set of metric pairs.

    More precisely, we define an equivalence relation (Lemma~\ref{lem:Sclass-equi}) between metrics. The equivalence classes of this relation are called {\em $\cS$-classes}, see Definition \ref{def:Sclass}. We are then able to define the Liouville action in Definition~\ref{def:Liouville} for two $(1,1)$-metrics in the same $\cS$-class. The formula for the Liouville action for two metrics $g$ and $h$ related by the conformal factor $u$ such that $h=e^{2u}g$ is the same as formula  \eqref{def:Liouville-action-eq}
	-- see Lemma \ref{lemma:EquivalentSplit2} for details. With this $\cS$-class at hand, we prove in Propositions \ref{pro:propSclass} and \ref{pro:ActionBetweenUniformizing}:
	
	\begin{theo}\label{theo:F}
		The monotonicity formula and the Chasles relation hold for metrics in the same $\cS$-class. Moreover if two metrics $g$ and $h$ in the same $\cS$-class have the same constant curvature then 	\begin{align}\cS(g,h)&=0\ .\label{eq:intro-B4}\end{align} 
	\end{theo}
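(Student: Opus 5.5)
We work with the $\cS$-class of Definition~\ref{def:Sclass}. For metrics in one class the Liouville action of Definition~\ref{def:Liouville} is given by formula \eqref{def:Liouville-action-eq} (Lemma~\ref{lemma:EquivalentSplit2}), but $u$ is no longer compactly supported, so each integration by parts now produces boundary terms. The strategy is to exhaust $\bA$ by compact subannuli $\bA_R$, run the compact-support proofs of Theorem~\ref{theo:B} on $\bA_R$, and show that the boundary contributions vanish as $R\to\infty$. The vanishing should come from the decay of the conformal factors that defines the $\cS$-class.

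\emph{Monotonicity formula.} For $h=e^{2u}g$ we have $F_h=F_g-\d(\d u\circ \I)$, up to the sign convention used in the proof of Theorem~\ref{theo:B}. Hence
\[
\frac14\int_{\bA_R}u(F_g+F_h)=\frac12\int_{\bA_R}uF_g-\frac14\int_{\bA_R}u\,\d(\d u\circ\I)\ ,
\]
which is the truncated version of \eqref{def:Liouville-action-eq}. Monotonicity therefore holds on each $\bA_R$ with no integration by parts at all. One only needs both sides to converge, and this is exactly what the definition of $\cS$-class and Lemma~\ref{lemma:EquivalentSplit2} guarantee.

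\emph{Chasles relation.} Write $k=e^{2v}g$ and $h=e^{2w}k$, so that $u=v+w$. Expand $\cS(g,h)-\cS(g,k)-\cS(k,h)$ using $F_k=F_g-\d(\d v\circ\I)$. The cross terms reduce to
\[
-\frac14\int_{\bA_R}\bigl(v\,\d(\d w\circ\I)-w\,\d(\d v\circ\I)\bigr)\ .
\]
By Stokes and the symmetry $\d v\wedge(\d w\circ\I)=\d w\wedge(\d v\circ\I)$, this equals the boundary term
\[
-\frac14\int_{\partial\bA_R}\bigl(v\,\d w\circ\I-w\,\d v\circ\I\bigr)\ .
\]
The symmetry identity holds because $\I$ is self-adjoint for the pairing involved, being split rather than complex. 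The key step is to show this boundary term tends to $0$. I would use the asymptotic control on conformal factors within an $\cS$-class: the factors, or their differences from a fixed model, together with their first derivatives, decay fast enough at the two ends of $\bA$ that the products are $o(1)$ on $\partial\bA_R$. Chasles must be established before monotonicity if the well-definedness of $\cS$ relied on it, so I would check the order against Lemma~\ref{lem:Sclass-equi}. I expect this decay estimate to be the main obstacle, because it depends on exactly how the $\cS$-class is defined. If the definition only controls $u$ in an averaged sense, I would choose the exhaustion $R$ along a subsequence where the boundary integrals are small.

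\emph{Vanishing for equal constant curvature.} Suppose $F_g=\kappa\,\d\vol_g$ and $F_h=\kappa\,\d\vol_h$ with the same constant $\kappa$. By monotonicity,
\[
\cS(g,h)=\frac{\kappa}{4}\int_{\bA} u\,(\d\vol_g+e^{2u}\d\vol_g)\ .
\]
If $\kappa=0$ we are done. Otherwise, the natural route is to use the classification of constant curvature metrics on $\dS$: two such metrics in the same $\cS$-class differ by a conformal diffeomorphism, namely an element of $\psld\times\psld$ acting on the boundary. One then shows the action is invariant under this diffeomorphism. Alternatively, join $g$ to $h$ by a path $g_t$ of constant-curvature metrics inside the class. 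The variational formula of Theorem~\ref{theo:B} (extended to the $\cS$-class as above) gives
\[
\frac{\d}{\d t}\cS(g,g_t)=\frac12\int\dot u_t\,F_{g_t}=\frac{\kappa}{2}\int\dot u_t\,\d\vol_{g_t}=\frac{\kappa}{4}\,\frac{\d}{\d t}\vol_{g_t}\ .
\]
Here the last volume is a renormalized one, which vanishes by the area condition built into the $\cS$-class. Integrating in $t$ yields $\cS(g,h)=0$.
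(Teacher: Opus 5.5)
Your monotonicity argument is the same as the paper's: direct substitution of $F_h=F_g-\d(\d u\circ\I)$, with no integration by parts. Your algebra reducing Chasles to the boundary term $-\frac14\int_{\partial\bA_R}(v\,\d w\circ\I-w\,\d v\circ\I)$ is also correct.

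\textbf{Gap in the Chasles relation.} The problem is making this boundary term vanish. The $\cS$-class (Definition~\ref{def:Sclass}) gives no control on first derivatives. It only gives that $u\to 0$ uniformly, that $\dal_g u\in\mathsf L^\infty\cap\mathsf L^1$, and that $\VB$ is finite along one polygonal curve. Even for $C^2$ metrics, nothing bounds $\d u$ near $\Delta$. On a non-lightlike boundary curve with tangent $T$, $(\d w\circ\I)(T)=\d w(\I T)$ is a derivative in a direction transverse to the curve, so nothing in the hypotheses controls it. Choosing $R$ along a good subsequence is not an argument without some uniform bound.

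The missing idea is to exhaust $\bA$ by \emph{lightlike polygonal curves} $C_N\to\Delta$. On each side, $\I$ acts by $\pm1$ on the tangent line, so $\d w\circ\I$ restricts to $\pm\d w$. Then $\left|\int_{C_N}v\,\d w\circ\I\right|$ is bounded by $\sup_{C_N}|v|$ times the variation $\VB(w,C_N)$ of $w$ along the sides.
\begin{itemize}
\item The first factor tends to $0$ by condition (2).
\item The second factor is bounded independently of $N$. Stokes on the lightlike diamonds between two polygonal curves gives $|\VB(w,P_0)-\VB(w,P_1)|\le\frac12\int_\bA|\dal_g w|\,\d a_g$ (Lemma~\ref{lem:BV}), which is finite by condition (3). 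Condition (4) supplies one curve of finite variation (Corollary~\ref{cor:BV}).
\end{itemize}
This is precisely why conditions (3) and (4) appear in the definition.

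\textbf{Gap in the constant-curvature statement.} Your first route rests on a false classification. By Proposition~\ref{pro:SplitDiff}, split diffeomorphisms of $\bA$ are $\Phi(x,y)=(\varphi(x),\varphi(y))$ with $\varphi$ an \emph{arbitrary} diffeomorphism of $\Rp$. A pair $(A,B)\in\psld\times\psld$ with $A\neq B$ does not even preserve $\Delta$. So the metrics $\Phi^*g_0$ form an infinite-dimensional family, and the projective $\varphi$ just give back $g_0$. Moreover, split invariance gives $\cS(\Phi^*g,\Phi^*h)=\cS(g,h)$, not $\cS(g,\Phi^*g)=0$; the latter is the entire content of the claim.

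Your second route has the paper's skeleton (Proposition~\ref{pro:ActionBetweenUniformizing}):
\begin{itemize}
\item take a path $\Phi_t^*g_0$ coming from a path in $\operatorname{Diff}(\Rp)$;
\item use split invariance to move the base point to $t=0$;
\item apply the variational formula to reduce to $\int_\bA\alpha\,\d a_{g_0}=0$, where $\alpha$ is the $t$-derivative of $u_t$ at $t=0$.
\end{itemize}
However, the decisive fact is the vanishing of the renormalized area $\int_\bA(e^{2u_t}-1)\,\d a_{g_0}$, and you attribute it to an area condition that is not in the $\cS$-class. Condition (3) together with $\dal_{g_0}u_t=\kappa(1-e^{2u_t})$ only makes this integral finite.

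The paper proves the vanishing as follows. Constant curvature gives $\kappa(1-e^{2u_t})\,\d a_{g_0}=\d(\d u_t\circ\I)$. The Schwarzian expansion $u_t=\frac1{12}(x-y)^2S_{\varphi_t}(x)+o((x-y)^2)$ (Lemma~\ref{lem:LinkWithSchwarzian}) shows that $u_t$ extends to a $C^2$ function on the torus $\Rp\times\Rp$ vanishing on $\Delta$. Hence $\int_\bA\d(\d u_t\circ\I)=0$ by Stokes on the closed torus. Therefore $\int_\bA(1-e^{2u_t})\,\d a_{g_0}=0$ for every $t$, and differentiating at $t=0$ gives $\int_\bA\alpha\,\d a_{g_0}=0$.
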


Observe that the last statement \eqref{eq:intro-B4} in  Theorem~\ref{theo:F} indeed makes sense: given a split annulus, there are several metrics of constant curvature in the same conformal class and in the same $\cS$-class (see Proposition \ref{pro:ActionBetweenUniformizing} for details). 
	
This theorem allows us to define the {\em Liouville action} for a metric $h$ in the $\cS$-class of a constant curvature metric $h_0$, as
\[ \cS(h)\defeq \cS(h_0,h)\ .\]
This definition is unambiguous in the choice of constant curvature metric $h_0$ in the conformal class of $h$, thanks to Chasles relation and assertion~\eqref{eq:intro-B4} in Theorem~\ref{theo:F}.

\bigskip
\noindent{\sc Positive curves:} We now apply these results for (locally) positive curves in a flag manifold $\cF$ associated with a group $\mG$ equipped with a positive structure. Positive structures were introduced in \cite{Guichard:2018vo} and positive curves in \cite{Guichard:2025ab}. Among classical examples are spacelike curves in the Einstein universe of arbitrary dimension; they may also arise as convex curves in the projective plane $\Rpn^2$ or more generally hyperconvex curves in $\Rpn^n$. They are discussed in section \ref{sec:positive-curves}.

We sketch now how positive curves give rise to $(1,1)$-metrics on $\bA$: if we consider a $C^1$ positive curve $c$ from $\Rp$ to $\cF$, we obtain a $C^1$-immersion from $\bA=\Rp\times\Rp\setminus \Delta$ to $\cG=\cF\times\cF\setminus \Delta$. The latter is equipped with a $(p,p)$ metric, and the induced metric $h_c$ on $\bA$ is $(1,1)$. 

We now restrict our discussion to {\em piecewise circles}: 
a special case of positive curves arising from {\em circles} which are orbits of {\em positive $\psld$} -- see Section \ref{sec:positive-flag}. The corresponding metric on $\bA$ has constant curvature $k$ (since $\psld$ is a transitive group of isometries), and the value of the constant curvature will depend on the choice of the conjugacy class of $\psld$.

We then define -- see Definition \ref{def:piecewise circle} -- a {\em piecewise circle} as a $C^1$-curve which is piecewise a circle for a given conjugacy class of $\psld$. This generalizes the notion of piecewise M\"obius curves discussed in Mario Bonk, Janne Junnila, Donald Marshall, Steffen Rohde, and Yilin Wang as well as Dragomir \v{S}ari\'{c}, Yilin Wang, and Catherine Wolfram \cite{v-Saric:2024aa,MRW2}. We can then compare the metric $h_c$ with the metric $h_0$ of constant curvature coming from a circle. We show the following result.

\begin{theo}
	The metric $h_c$ is in the $\cS$-class of $h_0$. Consequently, its Liouville action $\cS(h_c,h_0)$ is finite.
\end{theo}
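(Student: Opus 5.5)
We do not yet have the precise definition of the $\cS$-class, so the plan assumes the following reading, consistent with how the introduction motivates it. Two $(1,1)$-metrics $g$ and $h=e^{2u}g$ on the split annulus $\bA=\Rp\times\Rp\setminus\Delta$ are in the same $\cS$-class when $u$ decays fast enough near the diagonal $\Delta$ and near the ends of $\bA$. The condition should make the integrals $\int u F_g$ and $\int u\,\d(\d u\circ\I)$ absolutely convergent. It should also allow approximation by compactly supported conformal factors, so that the formula of Lemma~\ref{lemma:EquivalentSplit2} applies.

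The proof then reduces to a local analysis of the conformal factor $u$ with $h_c=e^{2u}h_0$. Here $h_0$ is the constant curvature metric pulled back by a single circle $c_0$, chosen to agree with $c$ on one arc.

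\textbf{Step 1: the conformal factor in coordinates.} Write the metric on $\cG=\cF\times\cF\setminus\Delta$ at a pair $(x,y)$ restricted to the image of $c\times c$. In lightlike coordinates $(s,t)$ on $\bA$ this gives
\[
h_c=\beta_c(s,t)\,\d s\,\d t .
\]
The coefficient $\beta_c$ is built from a cross-ratio-type pairing of $\dot c(s)$ and $\dot c(t)$ relative to the transverse pair $(c(s),c(t))$. For a circle this coefficient is the explicit model $\beta_0=k'/(s-t)^2$, the standard constant curvature metric on $\dS$.

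Since $c$ is piecewise a circle, $\Rp$ splits into finitely many arcs $I_1,\dots,I_n$, with $c|_{I_j}=g_j\cdot c_0|_{I_j}$ for $g_j$ in the positive $\psld$ conjugacy class. Three consequences follow.
\begin{itemize}
\item On each product $I_j\times I_k$ with $j=k$, positivity and $\psld$-invariance give $u\equiv 0$ once we reparametrize compatibly. This only uses that the two circles agree up to the group, which acts isometrically.
\item On off-diagonal blocks $I_j\times I_k$ with $j\neq k$, $u$ is smooth and bounded, since it is a fixed analytic expression in two circle arcs and transversality holds.
\item The only delicate region is near the diagonal points $(p,p)$ with $p$ a breakpoint of $c$.
\end{itemize}

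\textbf{Step 2: the breakpoints.} Near a breakpoint $p$, the curve $c$ is $C^1$, so the two circles $g_j c_0$ and $g_{j+1}c_0$ share position and tangent at $p$. I would normalize $g_j=\Id$ and write $g_{j+1}$ as an element of the stabilizer of the first-order jet of the circle at $p$. This element is a unipotent/parabolic-type element that agrees with the identity to first order.

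A Taylor expansion of $\beta_c/\beta_0$ at $(s,t)$ with $s<p<t$ then gives $u(s,t)=O(|s-t|)$, or at least $O(|s-p|+|t-p|)$. This is combined with the curvature $F_{h_0}=k\,\beta_0\,\d s\wedge\d t$, whose density is of order $(s-t)^{-2}$. Whether the integrability condition defining the $\cS$-class holds at this corner is the main obstacle. Because $C^1$ matching only gives first-order vanishing of $u$, one must check that the exact combinatorics of the $\cS$-class condition (presumably decay of $u$ and $\d u$ against $\beta_0$) is satisfied.

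I would try first to exploit the explicit group structure: the mismatch $g_{j+1}$ acts on the quadrant $\{s<p<t\}$, and in the coordinates where $p=\infty$ it becomes an affine map. There $u$ should become a function of a single homogeneous variable, bounded and with integrable derivatives in the quadrant. The estimate would then follow by direct integration.

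\textbf{Step 3: conclusion.} Finiteness of $\cS(h_c,h_0)$ then follows from the monotonicity formula for metrics in the same $\cS$-class (Theorem~\ref{theo:F}, Proposition~\ref{pro:propSclass}):
\[
\cS(h_0,h_c)=\tfrac14\int_\bA u\,(F_{h_0}+F_{h_c}) .
\]
Both terms are absolutely integrable by the estimates in Steps 1 and 2, using the boundedness of the curvature density of $h_c$ relative to $h_0$ on each block.
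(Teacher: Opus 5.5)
\paragraph{Main gap: the corner estimates.} There is a genuine gap. The only nontrivial part of the theorem is the behaviour of $u$ at the corners $(p,p)$, where $p$ is a turning point, and you leave exactly this as ``the main obstacle''.

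Even the estimate you aim for would not suffice. The $\cS$-class (Definition~\ref{def:Sclass}) requires $u\in\mathsf L^\infty$ and $u\to0$ uniformly at $\partial\bA$. It also requires $\dal_{h_0}u\in\mathsf L^\infty(\bA,\d a_{h_0})\cap\mathsf L^1(\bA,\d a_{h_0})$, and $\VB(u,P)<\infty$ for some polygonal curve $P$.

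A pointwise bound $u=O(|s-t|)$ gives no control on $\partial^2_{st}u$. So it yields neither $\int|\partial^2_{st}u|\,\d s\,\d t<\infty$ nor boundedness of $(s-t)^2\partial^2_{st}u$ near the corner.

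In Step~3 you invoke ``boundedness of the curvature density of $h_c$ relative to $h_0$''. Since $\dal_{h_0}u=K(h_0)-e^{2u}K(h_c)$, this is precisely the statement $\dal_{h_0}u\in\mathsf L^\infty$ near the corner, i.e.\ part of what must be proved. Away from the corners there is a further point: $u$ is only continuous across the lines $s=a_i$, $t=a_i$. So $\dal u$ is a distribution, and you must check it has no singular part there (this is Lemma~\ref{lem:uSclass0}).

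\paragraph{Why the proposed mechanism fails, and what replaces it.} Two adjacent arcs are circles with the same position and velocity at $p$, so they differ by an element of $\G$ fixing $p$ and that velocity vector. This element is not unipotent in general. It cannot lie in $\iota(\psld)$ either, since then all arcs would lie on one circle.

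This element acts on $\bF$, not on the parameter circle. Only the M\"obius reparametrization part becomes a translation when $p$ is sent to $\infty$. So reducing $u$ to a function of one homogeneous variable works for $(\psld,\Rp)$, where the mismatch is a pure reparametrization. It fails for a general positive $(\G,\bF)$, where $\bb_{\gamma,\omega}$ has no explicit formula to Taylor-expand.

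The paper instead uses a renormalization argument:
\begin{enumerate}
\item Normalize so that the turning point is $0$ and the first arc is the circle $\alpha_0$. Take $h$ the dilation fixing $0$ and $\infty$, and $H=\iota_0(h)$.
\item Parametrize the circles $C\in\bC(p,\ell)$ near $C_0$ through an $H$-invariant transversal $\Sigma$ (Lemma~\ref{lem:Hht}). This gives $u_{H^{-1}C}=u_C\circ h$.
\item Tile the corner by the $L$-shapes $L_k=h^k(L_0)$. Then $\sup_{L_k}|u_C|$, $\sup_{L_k}|\dal_{g_C} u_C|$ and $\int_{L_k}|\dal_{g_C} u_C|$ equal the corresponding quantities on $L_0$ for the circle $H^{-k}C$.
\item These quantities are Lipschitz functions of the circle and vanish at $C_0$. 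Since $H^{-1}$ contracts $\bC(p,\ell)$ to $C_0$ (Lemma~\ref{lem:Attracting}), they decay like $\lambda^k$ with $\lambda<1$.
\end{enumerate}
This gives $u\to0$ and $\dal u\in\mathsf L^\infty$ near each turning point. Summing the geometric series gives $\dal u\in\mathsf L^1$. The argument works for any positive flag variety. Some argument of this kind is what your Step~2 is missing.
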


Thus, given $\mG$, a positive structure on a flag manifold $\cF$ for $\mG$, and a positive $\psld$, 
we obtain an invariant (under the action of $\mG$) of a piecewise circle map $c$ in $\cF$ (with respect to our choice of $\psld$) as
$$
\cS(c)\defeq \cS(h_0,h_c)\ .
$$
Our main result is then that $\cS(c)$ is finite and circles are critical points of this action $\cS(c)$ by Corollary~\ref{cor:critical_pt}.

\tableofcontents

\section{Conformal Lorentz surfaces}\label{sec:conformal}

\subsection{Split structure}

Let $V$ be a two-dimensional real vector space. We fix an orientation on $V$. 

\subsubsection{Split vector spaces} A {\em split structure} on $V$ is a pair $(V_1,V_2)$ of distinct lines in $V$.  A {\em split basis} is then an oriented basis $(v_1,v_2)$ of $V$ with $v_i$ spanning $V_i$.  The {\em canonical involution} $\rI$ is the involution of $V$ preserving each $V_i$ and such that $\rI_{\vert V_i}= (-1)^i$.

 A {\em Lorentz product} is a quadratic form of signature $(1,1)$ on $V$. Split structures are linked with Lorentz metrics as follows.  For a Lorentz metric $\bq$,  there is a unique split structure $(V_1,V_2)$ on $V$ such that each $V_i$ is $\bq$-isotropic (namely, $\bq_{\vert V_i}=0$) and $\bq(v_1,v_2)$ is positive for any split basis $(v_1,v_2)$. The quadratic form $\bq$ is then said to be {\em compatible} with the split structure $(V_1,V_2)$.

Recall that an endomorphism $\varphi$ of $(V,\bq)$ is {\em conformal} if $\varphi^*\bq=e^\lambda \bq$ for some real number $\lambda$. The group $\Conf(V,\bq)$ of conformal endomorphisms of $V$ has four connected components, and we denote by $\Conf_+(V,\bq)$ the index 2 subgroup consisting of orientation-preserving conformal endomorphisms. Observe that $\Conf_+(V,\bq)$ is isomorphic to the nonconnected Lie group $\Conf_+(1,1)=\RR_{>0}\times \SO(1,1)$.

\begin{lemma}\label{lemma:EquivalentSplit1}
An orientation-preserving endomorphism of $(V,\bq)$ is conformal if and only if it preserves the associated split structure.
\end{lemma}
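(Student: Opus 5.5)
Since the orientation-preserving condition and both properties are basis-independent, I will fix a split basis $(v_1,v_2)$ adapted to the split structure $(V_1,V_2)$ compatible with $\bq$. In this basis $\bq(x_1v_1+x_2v_2)=2a\,x_1x_2$ with $a=\bq(v_1,v_2)>0$, since both $V_i$ are isotropic. The plan is to translate both conditions into conditions on the matrix $\begin{pmatrix}\alpha&\beta\\ \gamma&\delta\end{pmatrix}$ of $\varphi$ in this basis, with $\alpha\delta-\beta\gamma>0$.

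For the forward direction, suppose $\varphi^*\bq=e^\lambda\bq$. Then $\varphi$ sends isotropic vectors to isotropic vectors, because $\bq(\varphi v)=e^\lambda\bq(v)$. Since $\bq$ has signature $(1,1)$, its isotropic cone is exactly $V_1\cup V_2$, so $\varphi$ permutes the two lines. It cannot exchange them: if $\varphi(V_1)=V_2$ and $\varphi(V_2)=V_1$, the matrix is antidiagonal, $\varphi(v_1)=\gamma v_2$ and $\varphi(v_2)=\beta v_1$, with determinant $-\beta\gamma$. Then $\varphi^*\bq(v_1,v_2)=\bq(\gamma v_2,\beta v_1)=\beta\gamma a$, and equating this with $e^\lambda a>0$ forces $\beta\gamma>0$, hence determinant $<0$. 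This contradicts orientation preservation. So $\varphi$ preserves each $V_i$.

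For the converse, if $\varphi$ preserves each $V_i$, its matrix is diagonal, $\varphi(v_i)=\mu_iv_i$, and orientation preservation gives $\mu_1\mu_2>0$. Then $\varphi^*\bq(x_1v_1+x_2v_2)=2a\mu_1\mu_2x_1x_2=\mu_1\mu_2\,\bq$. This is conformal with $e^\lambda=\mu_1\mu_2$, and it is invertible, which matters if endomorphisms are meant to be automorphisms.

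The only point needing care is excluding the swap of the two isotropic lines. That is where the positivity normalization $\bq(v_1,v_2)>0$ in the definition of the compatible split structure, together with orientation, is used; everything else is a direct computation.
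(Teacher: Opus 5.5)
Your proof is correct and follows the same route as the paper. Conformality forces $\varphi$ to preserve the isotropic cone $V_1\cup V_2$. Exchanging the two lines is ruled out because $\bq(v_1,v_2)>0$ and $e^\lambda>0$ force the antidiagonal coefficients to have positive product, hence negative determinant. The converse is the same diagonal computation $\varphi^*\bq=\mu_1\mu_2\,\bq$. You also spell out details the paper leaves implicit: the explicit determinant $-\beta\gamma$, and the fact that $\varphi$ is invertible (from $\det>0$), so it genuinely permutes the two lines.
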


\begin{proof}
Let $\varphi$ be an orientation-preserving endomorphism of $V$ and let $(v_1,v_2)$ be a split basis.

If $\varphi$ is conformal, then it maps $v_i$ to an isotropic vector, so it globally preserves $V_1\cup V_2$. To prove that $\varphi$ cannot exchange $V_1$ and $V_2$, just observe that if $\varphi(v_i)=\mu_i v_{i+1}$ then the conformality of $\varphi$ implies $\mu_1\mu_2>0$ and so $\varphi$ reverses the  orientation.

Conversely, if $\varphi$ preserves each $V_i$, since it preserves orientation, there exists $\mu_i$ such that $\varphi(v_i)=\mu_iv_i$ with $\mu_1\mu_2>0$. So $\varphi$ is conformal.
\end{proof}

In particular, a split structure on $V$ is equivalent to a conformal class of Lorentz structures.

\subsubsection{Split surfaces and Lorentz metric} Let $S$ be a smooth, oriented surface. A {\em split structure} $\boldsymbol\sigma$   on $S$ is a pair $(\cL_1,\cL_2)$ of transverse 1-dimensional foliations. A {\em split surface} is then a pair $(S,\boldsymbol\sigma)$ where $S$ is an oriented surface and $\boldsymbol\sigma$ is a split structure on $S$. A {\em split framing} is a frame $(u_1,u_2)$ of $\T S$ such that at any point $x$, the pair $(u_1(x),u_2(x))$ is a split basis of $\T_xS$ with split structure $(\T_x\cL_1,\T_x\cL_2)$.

A ($C^k$) {\em Lorentz metric} on $S$ is a ($C^k$) field of Lorentz products on $\T S$. Two Lorentz metrics $g$ and $h$
 are {\em conformally equivalent} or {\em conformal} if there is a function $f$ on $S$ such that $g=e^{2f}h$. We define $\cM(S,\sigma)$ to be the space of metrics in the conformal defined by the split structure of $\sigma$.

Lemma~\ref{lemma:EquivalentSplit1} has the following consequence:

\begin{lemma}\label{lemma:EquivalentSplit2}
Given a smooth, oriented surface $S$, the following structures are equivalent:
\begin{enumerate}
\item a split structure,
\item a conformal class of Lorentz metric,
\item a field of involutions $\rI$ in $\Gamma(S,\End(\T S))$ with $1$-dimensional eigenspaces,
\item a reduction of the structure group of the bundle of oriented frames to $\Conf_+(1,1)$.
\end{enumerate}
\end{lemma}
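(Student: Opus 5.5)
The plan is to globalize Lemma~\ref{lemma:EquivalentSplit1} pointwise and check that the pointwise constructions depend smoothly on the point. I will build explicit maps between the four structures and check that they are mutually inverse. Each construction is fibrewise (done in $\T_xS$ for each $x$) and visibly smooth, so the work is linear algebra in a two-dimensional oriented vector space.

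For (1)$\Leftrightarrow$(3), a split structure $(\cL_1,\cL_2)$ gives at each $x$ the split structure $(\T_x\cL_1,\T_x\cL_2)$ on $\T_xS$. Its canonical involution $\rI_x$ acts as $(-1)^i$ on $\T_x\cL_i$, so it has one-dimensional eigenspaces. Smoothness follows from writing $\rI$ in a local split framing, which exists because line fields locally admit nonvanishing sections. Conversely, a smooth field of involutions with one-dimensional eigenspaces has eigenline fields $E_{\pm1}=\Ker(\rI\mp\Id)$, which are smooth rank-one distributions. One-dimensional distributions are always integrable, so they define two transverse foliations. The ordering $(\cL_1,\cL_2)=(E_{-1},E_{+1})$ is fixed by the sign convention $\rI_{\vert V_i}=(-1)^i$.

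For (1)$\Leftrightarrow$(2), a conformal class of Lorentz metrics gives at each point the unique compatible split structure described before Lemma~\ref{lemma:EquivalentSplit1}: the isotropic lines, ordered so that $\bq(v_1,v_2)>0$ for a split (oriented) basis. This ordering is unchanged under $\bq\mapsto e^{2f}\bq$, since $e^{2f}>0$, and it varies smoothly because the isotropic lines of a smooth Lorentz metric are smooth line fields. Conversely, given a split structure, choose locally a split framing $(u_1,u_2)$ and set $\bq(u_1,u_1)=\bq(u_2,u_2)=0$ and $\bq(u_1,u_2)=1$. Changing the framing replaces $u_i$ by $\mu_iu_i$ with $\mu_1\mu_2>0$, which multiplies $\bq$ by a positive function. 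So the local metrics glue, using a partition of unity (convex combinations of positive multiples remain positive multiples), into a global metric whose conformal class is well defined. The two constructions are inverse to each other by the uniqueness statement quoted above.

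For (2),(1)$\Leftrightarrow$(4), a split structure determines the subbundle of oriented frames that are split bases. By Lemma~\ref{lemma:EquivalentSplit1}, the orientation-preserving linear maps carrying split bases to split bases are exactly those in $\Conf_+(\T_xS,\bq)\cong\Conf_+(1,1)$, so this subbundle is a principal $\Conf_+(1,1)$-reduction. One must be careful here: if the split-basis subbundle is defined literally by $v_i\in V_i$, its structure group is the diagonal group with $\mu_1\mu_2>0$, which is exactly the image of $\Conf_+(1,1)=\RR_{>0}\times\SO(1,1)$ in the basis $(v_1,v_2)$, since $\SO(1,1)$ includes $-\Id$. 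Conversely, a reduction $P$ to $\Conf_+(1,1)$ determines a split structure by declaring $V_i$ to be the span of $v_i$ for any frame $(v_1,v_2)\in P_x$. This is well defined because $\Conf_+(1,1)$, in these coordinates, preserves each coordinate axis.

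I expect the main obstacle to be the orientation and sign bookkeeping rather than anything analytic. The ordering of $(V_1,V_2)$ has to be matched consistently with the eigenvalue sign of $\rI$ and with the positivity $\bq(v_1,v_2)>0$. One must also check that $\Conf_+(1,1)$ is exactly the diagonal group with $\mu_1\mu_2>0$, not its identity component, which is the reason the paper notes that this group is nonconnected.
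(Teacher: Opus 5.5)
Your proposal is correct and follows the same route as the paper, which states the lemma as a pointwise consequence of Lemma~\ref{lemma:EquivalentSplit1}, together with the remark that $\cL_i$ is tangent to $\Ker(\rI-(-1)^i\Id)$. You additionally supply the smoothness, integrability and partition-of-unity details that the paper leaves implicit, and your identification of $\Conf_+(1,1)$ with the diagonal matrices $\mathrm{diag}(\mu_1,\mu_2)$, $\mu_1\mu_2>0$, in a split basis is right.
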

\begin{remark}
In the equivalence described above, the leaves of the foliation $\cL_i$ are the integral curves of the distribution $\Ker(\rI-(-1)^i\Id)$ on $S$.
\end{remark}

Observe that the existence of a split structure on a surface $S$ implies that the tangent bundle of $S$ is trivial. In particular, if $S$ is closed, then it is diffeomorphic to the torus.

A {\em split map} between two split surfaces $(S_1,\boldsymbol\sigma_1)$ and $(S_2,\boldsymbol\sigma_2)$ is a homeomorphism $f$ from $S_1$ to $S_2$ that sends lightlike geodesics to lightlike geodesics. We will only be interested in split diffeomorphisms, which can be characterised as maps that are conformal with respect to the underlying Lorentz conformal structures. The horizontal and vertical lines define a standard split structure $\boldsymbol\sigma_0$ on $\mathbf R^2$ and an {\em isothermal coordinate} on $(S,\boldsymbol\sigma)$ is a local chart with values in $(\mathbf R^2,\boldsymbol\sigma_0)$ which is a split map. 

\begin{lemma}[\sc Existence of isothermal coordinates]
Let $(S,\boldsymbol\sigma)$ be a split surface. Then locally $(S,\boldsymbol\sigma)$ admits isothermal coordinates.
\end{lemma}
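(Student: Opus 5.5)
Fix a point $x$ in $S$. The split structure $\boldsymbol\sigma=(\cL_1,\cL_2)$ consists of two transverse one-dimensional foliations. The plan is to build coordinates whose level sets are exactly the leaves, by choosing a first integral for each foliation. Concretely, I look for two smooth functions $f_1$ and $f_2$ on a neighbourhood $U$ of $x$ with the following properties.

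\begin{enumerate}
\item The function $f_i$ is constant along the leaves of $\cL_i$, so that $\d f_i$ vanishes on $\T\cL_i$.
\item The differential $\d f_i(x)$ is nonzero.
\end{enumerate}

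To construct $f_1$, I apply the local flow-box (Frobenius) theorem to the line field $\T\cL_1$. Any line field is integrable in dimension $2$, so near $x$ there are coordinates $(a,b)$ in which the leaves of $\cL_1$ are the lines $\{a=\mathrm{const}\}$. I then set $f_1=a$. The function $f_2$ is constructed in the same way from $\cL_2$.

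Next, consider the map $\Phi=(f_2,f_1)\colon U\to\R^2$, with the components ordered so that the leaves of $\cL_1$ go to horizontal lines and those of $\cL_2$ to vertical lines, or the other way round. The kernels of $\d f_1$ and $\d f_2$ at $x$ are $\T_x\cL_1$ and $\T_x\cL_2$. These lines are distinct by transversality, so $\d f_1(x)$ and $\d f_2(x)$ are linearly independent. By the inverse function theorem, $\Phi$ is therefore a diffeomorphism from a smaller neighbourhood of $x$ onto an open subset of $\R^2$.

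By construction, each leaf of $\cL_1$ lies in a level set of $f_1$, so its image is contained in a horizontal line. Likewise, each leaf of $\cL_2$ is mapped into a vertical line. Hence $\Phi$ sends the split structure to $\boldsymbol\sigma_0$, which is precisely the condition for $\Phi$ to be an isothermal coordinate chart.

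Two details still need adjusting.

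\begin{enumerate}
\item The ordering convention. Depending on how $\boldsymbol\sigma_0$ labels horizontal and vertical lines, I may have to swap the two components of $\Phi$.
\item Orientation. If $\Phi$ reverses orientation, I replace $f_1$ by $-f_1$. This does not change the level sets, so the leaves are still sent to the correct lines, and the chart now preserves orientation.
\end{enumerate}

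With these choices, a split framing is carried to a positive multiple of the frame $(\partial_x,\partial_y)$. By Lemma~\ref{lemma:EquivalentSplit1}, this is exactly the statement that $\Phi$ is conformal for the underlying Lorentz conformal classes.

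There is no real obstacle here. The only points needing care are the regularity of the foliations, since the flow-box theorem requires the line fields to be at least $C^1$, and the orientation bookkeeping described above.
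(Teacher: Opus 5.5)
Your proof is correct, but it takes the route dual to the paper's. The paper works with vector fields. It rescales a split framing $(X_1,X_2)$ to $Y_i=\mu_iX_i$ with $[Y_1,Y_2]=0$, and takes as chart the inverse of $F(s,t)=\Phi^s_{Y_1}\circ\Phi^t_{Y_2}(p)$. In other words, it builds the parametrization, whose coordinate vector fields are $Y_1$ and $Y_2$.

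You work with functions, or equivalently with the $1$-forms $\d f_i$ annihilating $\T\cL_i$. You build the chart itself from first integrals and conclude with the inverse function theorem. The two constructions are equivalent: the coordinate vector fields of your chart $(f_2,f_1)$ are exactly commuting rescalings of $X_1$ and $X_2$.

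Your version only needs the flow-box theorem, or really just the foliation charts that come with the definition of a foliation. The paper instead asserts without proof that the rescaling exists. Writing $[X_1,X_2]=aX_1+bX_2$, that step amounts to solving the transport equations $X_2(\log\mu_1)=a$ and $X_1(\log\mu_2)=-b$ along flow lines. In exchange, the paper gets orientation for free, since the $\mu_i$ are positive on an oriented framing, whereas you need your sign change $f_1\mapsto -f_1$.

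Two minor imprecisions in your write-up:
\begin{enumerate}
\item The level sets of $f_1$ contain the local leaves (plaques) of $\cL_1$ in $U$, not whole leaves. This is all that is needed.
\item A split basis is carried to $(a\partial_x,b\partial_y)$ with $ab>0$, rather than to ``a positive multiple'' of $(\partial_x,\partial_y)$.
\end{enumerate}
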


\begin{proof}
Let $(X_1,X_2)$ be a split framing around a point $p$. One can find positive functions $f_1$ and $f_2$ such that the new split framing $(Y_1,Y_2)$ with $Y_i=f_iX_i$ satisfies $[Y_1,Y_2]=0$. In particular, the flows commute, and the inverse of the map
\[F(s,t)= \Phi_{Y_1}^s \circ \Phi_{Y_2}^t (p)\]
defines isothermal coordinates around $p$.
\end{proof}

\subsubsection{Compatible Lorentz structure} Let $(S,\boldsymbol\sigma)$ be a split surface with canonical involution $\rI$, and let $V_i$ be the distribution tangent to $\cL_i$ for $i=1,2$.

\begin{definition} A Lorentz metric $g$ on $S$ is {\em compatible with $\boldsymbol\sigma$} if its conformal class coincides with $\boldsymbol\sigma$ (see Lemma \ref{lemma:EquivalentSplit2}). For $k \ge 1$, we denote by $\cM^k(S,\boldsymbol\sigma)$ the space of $C^k$-Lorentz metrics on $S$ compatible with $\boldsymbol\sigma$.
\end{definition}

\begin{remark} Observe that $C^k(S)$ acts on $\cM^k(S,\boldsymbol\sigma)$, where the action is given for a function $u$ and a metric $g$ by $(u,g)\mapsto e^{2u}g$. This action is simply transitive, since any two metrics in $\cM^k(S,\boldsymbol\sigma)$ are conformal. In other words, the space $\cM^k(S,\boldsymbol\sigma)$ is an affine space over  $C^k(S)$.
\end{remark}

\begin{remark}
For any Lorentz metric $g$ in $\cM^k(S,\boldsymbol\sigma)$, its {\em volume form} $\omega_g$  compatible with the orientation satisfies 
\[\omega_g(u,v)= g(u,\rI v)~.\]
Indeed, the standard flat Lorentz metric on $\R^2$ is given by $$g_{\scriptscriptstyle{flat}}=\d x\ \d y$$  where $\partial_x$ spans $V_1$ and $\partial_y$ spans $V_2$, and the corresponding volume form is $\omega_{\scriptscriptstyle{flat}} =  \d x\wedge \d y$. In particular, if $(v_1,v_2)$ is a split basis for $g$ satisfying $g(v_1,v_2)=1$, then
$$
\omega_g(v_1,v_2)=1\ . $$ 
It follows that the map 
\[\function{
\varphi:}{\cM(S,\boldsymbol\sigma)}{\Omega^2_+(S)}{g}{\omega_g} \]
defines a one-to-one correspondence between $\cM^k(S,\boldsymbol\sigma)$ and the set $\Vol_+^k(S)$ of $C^k$-volume forms compatible with the orientation.
\end{remark}

\begin{remark} Observe finally that if $\varphi$ is a $C^{k+1}$-split diffeomorphism from $(S_1,\boldsymbol\sigma_1)$ to $(S_2,\boldsymbol\sigma_2)$, then the pull-back $\varphi^*$ induces a one-to-one correspondence from $\cM^k(S_2,\boldsymbol\sigma_2)$ to $\cM^k(S_1,\boldsymbol\sigma_1)$.
	\end{remark}

\subsubsection{Curvature of a Lorentz surface}

Let $g$ be a compatible metric on a split surface $(S,\boldsymbol\sigma)$, and denote by $\nabla$ the associated Levi-Civita connection. The {\em d'Alembertian} $\dal_g$ is the differential operator defined on a  $C^2$ function $f$ by
\[\dal_g f = \tr_g(\nabla \d f)~.\]
We recall that, for a symmetric bilinear form $Q$ associated with the linear operator $A$ by $Q(u,v)=g(A u,v)$, $$
\tr_g(Q)\defeq  2Q(v_1,v_2)=\tr(A)\ 
$$
where $(v_1,v_2)$ is a split basis for $g$ satisfying $g(v_1,v_2)=1$. Observe in particular that $\tr_g(g)=2$.

Let $R_g$ be the curvature tensor of $g$, and let $\I$ define the split structure. The {\em sectional curvature} of $g$ is denoted by $K(g)$ and defined by the relation 
\[R_g = -K(g)\omega_g\otimes \I\ .\]
The {\em curvature $2$-form} of $g$  is  $F_g\defeq K(g) \omega_g$. Thus  $R_g=-F_g\otimes I$.

\begin{proposition}[\sc Conformal change]\label{pro:ConformalChange}
Let $g$ and $h$ be metrics in $\cM(S,\boldsymbol\sigma)$ with $h=e^{2u}g$. Then
\begin{align}
\dal_h&=e^{-2u}\dal_g\ ,\label{eq:daldal} \\ 
(\dal_g u)\omega_g&=\d(\d u\circ \I) = F_g-F_h~.
\end{align}

Equivalently, we have 
\[\dal_g u = K(g)-e^{2u} K(h)\]
which is the Lorentzian analog of the conformal change formula of the sectional curvature. 
\end{proposition}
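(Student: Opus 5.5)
Everything is local, so I will compute in isothermal coordinates $(x,y)$, given by the lemma on existence of isothermal coordinates, in which the split structure is $(\partial_x,\partial_y)$. Every compatible metric then has the form $g=e^{2\varphi}\,\d x\,\d y$, and $h=e^{2(\varphi+u)}\d x\,\d y$. The canonical involution is $\I\partial_x=-\partial_x$, $\I\partial_y=\partial_y$. By the remark on volume forms, $\omega_g=e^{2\varphi}\d x\wedge \d y$. I will prove each identity for a general $\varphi$ in this normal form and then specialise.

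The first step is the d'Alembertian. For $g=e^{2\varphi}\d x\d y$ the only nonzero Christoffel symbols are $\Gamma^x_{xx}=2\varphi_x$ and $\Gamma^y_{yy}=2\varphi_y$, because the isotropic lines are preserved by conformality. Hence
\[
\nabla\d f(\partial_x,\partial_y)=f_{xy}.
\]
Using the trace convention $\tr_g Q=2Q(v_1,v_2)$ with $v_1=e^{-\varphi}\partial_x$ and $v_2=e^{-\varphi}\partial_y$, this gives
\[
\dal_g f=2e^{-2\varphi}f_{xy}.
\]
Replacing $\varphi$ by $\varphi+u$ immediately yields \eqref{eq:daldal}.

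The second step handles $\d(\d u\circ\I)$. We have $\d u\circ\I=-u_x\,\d x+u_y\,\d y$. Therefore
\[
\d(\d u\circ \I)=(u_{yx}+u_{xy})\,\d x\wedge\d y=2u_{xy}\,\d x\wedge \d y=(\dal_g u)\,\omega_g,
\]
by the first step.

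The third step, and the main obstacle, is the curvature, where signs must match the convention $R_g=-K(g)\omega_g\otimes\I$. I would compute $R(\partial_x,\partial_y)\partial_x$ from the Christoffel symbols above:
\[
\nabla_{\partial_y}\nabla_{\partial_x}\partial_x=\nabla_{\partial_y}(2\varphi_x\partial_x)=2\varphi_{xy}\partial_x,
\]
while $\nabla_{\partial_x}\nabla_{\partial_y}\partial_x=0$. Hence $R(\partial_x,\partial_y)\partial_x=-2\varphi_{xy}\partial_x$, up to the sign convention for $R$. Comparing with $-K\omega_g(\partial_x,\partial_y)\I\partial_x=Ke^{2\varphi}\partial_x$ gives, with the standard convention $R(X,Y)=[\nabla_X,\nabla_Y]-\nabla_{[X,Y]}$,
\[
K(g)=-2e^{-2\varphi}\varphi_{xy}\quad\text{and}\quad F_g=-2\varphi_{xy}\,\d x\wedge \d y.
\]
I would check this sign against a known example, for instance de Sitter/AdS$^2$ with $e^{2\varphi}=4/(x-y)^2$, which should give constant curvature of the expected sign. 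I would also check the $\partial_y$ component to confirm it is consistent with the $\I$ factor; it gives the opposite sign, as required. Given the formula for $F_g$, we get
\[
F_g-F_h=2u_{xy}\,\d x\wedge\d y=\d(\d u\circ\I).
\]

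Finally, dividing $F_g-F_h$ by $\omega_g$ and using $\omega_h=e^{2u}\omega_g$ gives $\dal_g u=K(g)-e^{2u}K(h)$. Since all the identities are tensorial and coordinate-free, the local computation proves the proposition.
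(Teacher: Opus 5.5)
Your proposal is correct and follows essentially the same route as the paper: isothermal coordinates, the Christoffel symbols $\nabla_{\partial_x}\partial_x=2\varphi_x\partial_x$, $\nabla_{\partial_y}\partial_y=2\varphi_y\partial_y$ and $\nabla_{\partial_x}\partial_y=0$, the formula $\dal_g f=2e^{-2\varphi}f_{xy}$, the curvature $K(g)=-\dal_g\varphi$, and $\d(\d u\circ\I)=2u_{xy}\,\d x\wedge\d y$. The differences are cosmetic: you subtract the curvature forms $F_g=-2\varphi_{xy}\,\d x\wedge\d y$ directly and then divide by $\omega_g$, whereas the paper first derives $K(g)-e^{2u}K(h)=\dal_g u$ and then multiplies by $\omega_g$; you also evaluate $R(\partial_x,\partial_y)$ on $\partial_x$ rather than on $\partial_y$.
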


To see this,  we first compute the relevant terms in isothermal coordinates.
\begin{lemma}\label{lem:isothermal}
Let $g$ be a $C^2$-Lorentz metric on $S$ given by $g=e^{2u}\d x \d y$ in some isothermal coordinates $(x,y)$. Let $f$ be a function on $S$.

\begin{enumerate} 
\item The Levi-Civita connection $\nabla$ of $g$ is given by
\begin{align*}	
	\nabla_{\partial_x}\partial_y  = \nabla_{\partial_y}\partial_x = 0\ \ , \ \
	\nabla_{\partial_x}\partial_x  = 2 (\partial_x u) \ \partial_x\ \ , \ \
	\nabla_{\partial_y}\partial_y  = 2 (\partial_y u)\ \partial_y \ .
\end{align*}
\item The d'Alembertian with respect to $g$ is given by
$$\dal_g f= 2e^{-2u}\partial^2_{xy}f~.$$
\item The sectional curvature of $g$ satisfies $$K(g)=-\dal_gu~.$$ 
\end{enumerate}
\end{lemma}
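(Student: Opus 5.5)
The plan is a direct computation in the coordinate frame $(\partial_x,\partial_y)$, where $g=e^{2u}\,\d x\,\d y$. Throughout, $g(\partial_x,\partial_x)=g(\partial_y,\partial_y)=0$ and $g(\partial_x,\partial_y)=\tfrac12 e^{2u}$, since the symmetric form $\d x\,\d y$ evaluates to $\tfrac12$ on $(\partial_x,\partial_y)$. (If the paper's convention is instead $\d x\,\d y(\partial_x,\partial_y)=1$, the factors of $2$ in the statement change accordingly, so I will fix the normalization so that the stated formulas come out, and keep it consistent.)

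\emph{Item (1).} I will use the Koszul formula with commuting coordinate fields. Because $g_{xx}=g_{yy}=0$, the only nonzero Christoffel symbols are $\Gamma^x_{xx}=\partial_x\log g_{xy}=2\partial_x u$ and $\Gamma^y_{yy}=2\partial_y u$.

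Concretely, I would check each identity by pairing with $\partial_x$ and $\partial_y$:
\begin{itemize}
\item $g(\nabla_{\partial_x}\partial_y,\partial_x)=\tfrac12\partial_y g_{xx}=0$, and similarly $g(\nabla_{\partial_x}\partial_y,\partial_y)=0$. By nondegeneracy this gives $\nabla_{\partial_x}\partial_y=0$, and torsion-freeness gives $\nabla_{\partial_y}\partial_x=0$.
\item $g(\nabla_{\partial_x}\partial_x,\partial_x)=0$ and $g(\nabla_{\partial_x}\partial_x,\partial_y)=\partial_x g_{xy}$. Hence $\nabla_{\partial_x}\partial_x=(\partial_x g_{xy}/g_{xy})\,\partial_x=2\partial_x u\,\partial_x$.
\item The formula for $\nabla_{\partial_y}\partial_y$ follows by symmetry.
\end{itemize}

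\emph{Item (2).} I will use $\dal_g f=\tr_g(\nabla\d f)=2\,\nabla\d f(v_1,v_2)$, where $v_1=\lambda\partial_x$, $v_2=\mu\partial_y$ is a split basis with $g(v_1,v_2)=1$. Then
$$\nabla\d f(v_1,v_2)=\lambda\mu\bigl(\partial^2_{xy}f-\d f(\nabla_{\partial_x}\partial_y)\bigr)=\lambda\mu\,\partial^2_{xy}f,$$
using item (1). Finally, $\lambda\mu=1/g(\partial_x,\partial_y)$, which yields the stated factor $2e^{-2u}$ under the normalization fixed above.

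\emph{Item (3).} This is the only step with real content, namely getting the sign conventions right. I will compute $R(\partial_x,\partial_y)\partial_x=\nabla_{\partial_x}\nabla_{\partial_y}\partial_x-\nabla_{\partial_y}\nabla_{\partial_x}\partial_x=-\nabla_{\partial_y}(2\partial_xu\,\partial_x)=-2\partial^2_{xy}u\,\partial_x$, using $\nabla_{\partial_y}\partial_x=0$.

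This must be compared with $R_g=-K\,\omega_g\otimes\I$. With the paper's convention $\I\partial_x=-\partial_x$ (on $V_1$) and $\omega_g(\partial_x,\partial_y)=g(\partial_x,\I\partial_y)=g(\partial_x,\partial_y)$, the right-hand side evaluated on $(\partial_x,\partial_y)$ acting on $\partial_x$ gives $K\,g(\partial_x,\partial_y)\,\partial_x$. Equating the two expressions,
$$K=-2\partial^2_{xy}u/g(\partial_x,\partial_y)=-\dal_g u$$
by item (2). The main obstacle is checking that the sign and normalization conventions for $R$, $\I$, $\omega_g$ and $\d x\,\d y$ are mutually consistent. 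As a sanity check, I would test the result on the constant curvature example $g=4\,\d x\,\d y/(x-y)^2$ on $\bA$.
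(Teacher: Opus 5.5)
Your proof is correct and follows the paper's route. Item (1) comes from isotropy of $\partial_x,\partial_y$, metric compatibility and torsion-freeness: you use Koszul pairings, while the paper notes that $V_1,V_2$ are parallel, so $\nabla_{\partial_x}\partial_y\in V_1\cap V_2=\{0\}$. Item (2) is a trace over a null basis, and item (3) reads $K$ off one curvature component ($R_g(\partial_x,\partial_y)\partial_x$ for you, $R_g(\partial_x,\partial_y)\partial_y$ in the paper), with the same signs for $\I$ and $\omega_g$.

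One correction: your normalization remark is backwards. The paper's convention is $g(\partial_x,\partial_y)=e^{2u}$. This is forced by $\omega_{\rm flat}=\d x\wedge \d y$ together with $\omega_g(u,v)=g(u,\I v)$, and the paper's proof uses it explicitly. It is exactly this convention that gives $2e^{-2u}\partial^2_{xy}f$ in item (2); your $g(\partial_x,\partial_y)=\tfrac12 e^{2u}$ would give $4e^{-2u}\partial^2_{xy}f$ instead. Items (1) and (3) do not depend on this constant. The sanity check consistent with the statement is the paper's $2\,\d x\,\d y/(x-y)^2$, which has $K=1$, rather than $4\,\d x\,\d y/(x-y)^2$.
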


\begin{proof}
Differentiating the equations $g(\partial_x,\partial_x)=g(\partial_y,\partial_y)=0$, one obtains that both $V_1$ and $V_2$ are parallel with respect to $\nabla$. Thus, $\nabla$ splits into $\nabla=\nabla^1\oplus\nabla^2$ with $\nabla^i$ being a connection of the bundle $V_i$. Using $[\partial_x,\partial_y]=0$  we get
\[
\nabla_{\partial_x}\partial_y  =\nabla_{\partial_y}\partial_x\in V_1\cap V_2=\{0\} ~.\]
Differentiating $g(\partial_x,\partial_y)=e^{2u}$ then gives the expression of $\nabla$. This concludes the proof of item (1).

For item (2), given a function $f$, using the definition of the d'Alembertian, we have
\[\nabla \d f = \tfrac{1}{2}(\dal_g f)g + H_0~,\]
where $H_0$ is traceless and thus $H_0(\dx,\dy)=0$. This gives
\begin{align*}
\dal_g f & = \frac{2\nabla \d f (\partial_x,\partial_y)}{g(\partial_x,\partial_y)} \\
& = \frac{2\left( \partial_x(\d f(\partial_y))  - \d f(\nabla_{\partial_x}\partial_y)\right)}{e^{2u}}\\
& = 2e^{-2u} \partial^{2}_{xy}f~.
\end{align*}
Finally, for item (3), using the previous items, we have
\begin{align*}
R_g(\partial_x,\partial_y)\partial_y & =\nabla_{\partial_x}\nabla_{\partial_y}\partial_y \\
& = 2(\partial^2_{xy}u)\ \partial_y \\
& = (\dal_g u)  \omega_g(\partial_x,\partial_y) \I(\partial_y)~.
\end{align*}
The expression of $K(g)$ follows.
\end{proof}

\begin{proof}[Proof of Proposition \ref{pro:ConformalChange}] Observe first that item (2) of Lemma \ref{lem:isothermal} implies that
$$
\dal_h=e^{-2u}\dal_g\ .
$$ 
Write $g=e^{2v}\d x\d y$ in isothermal coordinates. Thus, the previous lemma gives
\[K(h)=-\dal_h(u+v) = -e^{-2u}\dal_g(u+v)\ .\] 
Thus 
\[K(g)-e^{2u}K(h) = -\dal_g v + \dal_g u + \dal_g v = \dal_gu~. \]
Now we have
\[\d (\d u \circ \I) = 2\partial^2_{xy}u\  \d x\wedge \d y = (\dal_g u)\omega_g~,\]
hence
$$F_g-F_h = \left(K(g) - e^{2u}K(h)\right) \omega_g = (\dal_g u)\omega_g =  \d (\d u\circ \I)~.$$
This concludes the proof. \end{proof}

\subsection{De Sitter surface}
\label{sec:dS}

Given a 3-dimensional real vector space $E$ equipped with a quadratic form $Q$ of signature $(2,1)$, the {\em de Sitter surface} is
\[\dS    = \{ x \in E~,~ Q(x)=1\}~.\]
The quadratic form $Q$ restricts to a Lorentz product on each tangent space $\T_x \dS   $ since this tangent space is identified with $x^\bot$. The resulting metric has constant curvature $+1$ and the group $\SO_0(Q)$ acts by isometries.

Given a point $x$ in $\dS$ and a split basis $(v_1,v_2)$ at $\T_x \dS$, the leaf $\cL_\alpha$ through $x$ is given by $x+\R v_\alpha$ and so its projection to $\Proj(E)$ intersects the quadric $\Rp\cong\{x\in \Proj(E)~,~\q_{\vert x}=0\}$ in $[v_\alpha]$. This defines a map
\[ \function{\Phi:}{\dS}{(\Rp\times\Rp)\setminus\Delta ,}
{p}{([v_1],[v_2])\ ,}\]
where $\Delta$ is the diagonal. Observe that given two distinct points $(x_1,x_2)$ in $(\Rp\times\Rp)\setminus\Delta$, the planes $x_1^\bot$ and $x_2^\bot$ intersect along a positive line and one can find a unique point $p$ in $\dS$ on this line such that $V_\alpha$ is contained in $x_\alpha$. Thus, $\Phi$ is bijective and is indeed a diffeomorphism. In this model, the split structure is given by the fiber of the projection on each factor. 

\begin{lemma}
Any affine chart on $\Rp$ defines an open set in $(\Rp\times\Rp)\setminus\Delta$ in which the de Sitter metric $g_0$ satisfies
\[g_0 = \frac{2\ \d x\ \d y}{(x-y)^2}~.\]
\end{lemma}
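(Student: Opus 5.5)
We will compute the pullback of the de Sitter metric under $\Phi^{-1}$ directly, after choosing a convenient model for $(E,Q)$.

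\emph{Choice of model.} Take $E$ to be the space of binary quadratic forms, or equivalently symmetric $2\times 2$ matrices, with $Q=-\det$ (suitably normalized). This has signature $(2,1)$. Its null cone consists of the squares $\ell^2$ of linear forms, so the quadric in $\Proj(E)$ is identified with $\Rp$. In an affine chart, the isotropic point with coordinate $x$ corresponds to the null vector
\[
n(x)=(X-xY)^2 .
\]
Write $B$ for the polarization of $Q$. Then
\[
B\bigl(n(x),n(y)\bigr)=c\,(x-y)^2
\]
for a positive constant $c$ depending on the normalization.

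\emph{Inverse of $\Phi$.} Given $(x,y)$ with $x\neq y$, the point $p\in\dS$ lying on the line $n(x)^\perp\cap n(y)^\perp$ is, up to sign,
\[
p(x,y)=\lambda\,(X-xY)(X-yY).
\]
This vector is orthogonal to both $n(x)$ and $n(y)$ because it contains both linear factors. We normalize $Q(p)=1$. Since $-\det$ of $(X-xY)(X-yY)$ is a positive multiple of $(x-y)^2$, this forces $\lambda=\pm\kappa/(x-y)$ for some constant $\kappa>0$. The sign is then fixed so that the lightlike directions $\partial_x p$ and $\partial_y p$ point toward $[n(x)]$ and $[n(y)]$ in the order required by $\Phi$; this amounts to choosing the orientation convention.

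\emph{Checking the lightlike directions.} We differentiate. Since
\[
\partial_x\!\left(\frac{X-xY}{x-y}\right)=\frac{X-yY}{(x-y)^2},
\]
we get that $\partial_x p$ is proportional to $(X-yY)^2/(x-y)^2$, which is the null vector $n(y)$. By symmetry, $\partial_y p$ is proportional to $n(x)$. So the coordinate lines are indeed the lightlike leaves. With the convention that the leaf $x+\R v_\alpha$ meets the quadric at $[v_\alpha]$, this agrees with $\Phi$ up to swapping the labels of the factors, which is harmless for the formula.

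\emph{Computing the metric.} Because $\partial_x p$ and $\partial_y p$ are null, $g_0(\partial_x,\partial_x)=g_0(\partial_y,\partial_y)=0$. The only surviving term is
\[
g_0(\partial_x,\partial_y)=B(\partial_x p,\partial_y p)=\pm\kappa^2\,\frac{B\bigl(n(y),n(x)\bigr)}{(x-y)^4}=\pm\frac{\kappa^2 c}{(x-y)^2}.
\]
Working out the normalization, $\kappa^2c=1$ and the sign is positive. Since $g_0$ is symmetric, the quadratic form is
\[
2\,g_0(\partial_x,\partial_y)\,\d x\,\d y=\frac{2\,\d x\,\d y}{(x-y)^2}.
\]
Different affine charts are related by elements of $\PSL_2(\R)$, which act isometrically and preserve $(x-y)^{-2}\d x\,\d y$. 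Hence the formula holds in any affine chart, consistently.

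\emph{Main obstacle.} The real work is bookkeeping of the constants. One must choose the normalization of $Q$ on quadratic forms so that $Q(p)=1$ gives exactly the constant $2$, and fix the signs so that $g_0(\partial_x,\partial_y)>0$, as the compatibility with the split structure requires. I would settle this with the explicit check $Q=b^2-ac$ on $aX^2+2bXY+cY^2$. With this choice, $-\det=b^2-ac$ has signature $(2,1)$, and $p=(X-xY)(X-yY)/(x-y)$ gives $b=-(x+y)/(2(x-y))$ and $ac=xy/(x-y)^2$. Then
\[
b^2-ac=\frac{(x+y)^2-4xy}{4(x-y)^2}=\frac14,
\]
so we take $\lambda=2/(x-y)$. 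Redoing the derivative computation with this $\lambda$ gives the stated constant.
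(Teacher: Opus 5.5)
Your route, a direct computation in the model of binary quadratic forms, differs from the paper's. The paper only observes that invariance under the affine group forces $g_0=\lambda\,\d x\,\d y/(x-y)^2$, and then fixes $\lambda$ by requiring curvature $1$. Your method is sound, but the constant is wrong as written, and you single it out yourself as the whole content of the lemma. You reach the displayed formula only through errors that happen to cancel.

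Take $Q=b^2-ac$ with its polarization $B$, so that $B(v,v)=Q(v)$, as the induced metric requires. Then $B(n(x),n(y))=xy-\tfrac12(x^2+y^2)=-\tfrac12(x-y)^2$, so $c=-\tfrac12<0$: null vectors in the same half-cone pair negatively. Also $\partial_x\bigl(\frac{X-xY}{x-y}\bigr)=-\frac{X-yY}{(x-y)^2}$, not $+\frac{X-yY}{(x-y)^2}$. With $\lambda=2/(x-y)$, i.e.\ $\kappa=2$, this gives $\partial_xp=-2n(y)/(x-y)^2$ and $\partial_yp=2n(x)/(x-y)^2$, hence
\[
g_0(\partial_x,\partial_y)=B(\partial_xp,\partial_yp)=-\frac{\kappa^2c}{(x-y)^2}=\frac{2}{(x-y)^2}\ .
\]
So $\kappa^2c=-2$, not $1$. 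Your two sign errors cancel, but the factor $2$ does not. If you then apply your last step, ``the quadratic form is $2g_0(\partial_x,\partial_y)\,\d x\,\d y$'', you get $4\,\d x\,\d y/(x-y)^2$, which contradicts the statement. In your own convention, your claimed value $g_0(\partial_x,\partial_y)=1/(x-y)^2$ is the metric of curvature $2$, not $1$. This follows from the paper's formula $K(g)=-2e^{-2u}\partial^2_{xy}u$ for $g(\partial_x,\partial_y)=e^{2u}$.

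The fix is to use the paper's convention. There $\d x\,\d y$ denotes the symmetric bilinear form with value $1$ on $(\partial_x,\partial_y)$. You can see this in the isothermal-coordinates lemma, where $g=e^{2u}\d x\,\d y$ means $g(\partial_x,\partial_y)=e^{2u}$, and in the remark that $\d x\,\d y$ has volume form $\d x\wedge\d y$. The lemma therefore says exactly $g_0(\partial_x,\partial_y)=2/(x-y)^2$. Your computation gives precisely this once the arithmetic is corrected and the extra factor $2$ in the conversion is dropped.

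Two smaller points. $B(\partial_xp,\partial_yp)$ is unchanged under $p\mapsto-p$, so no orientation choice enters the sign. And since $\partial_yp\propto n(x)$, your coordinates already agree with $\Phi$ without any relabeling.

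Compared with the paper, your approach actually checks that $\Phi^{-1}$ yields null coordinates and computes the constant from the embedding. The paper's two-line argument takes the null coordinates for granted and gets the constant from its curvature formula. Because the lemma is only about the constant, your write-up needs the corrected computation above to count as a proof.
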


\begin{proof}
Any split metric on $(\R \times \R) \setminus \Delta$ invariant under the affine group acting diagonally on $\R \times \R$ is of the form $\frac{\lambda \d x\d y}{(x-y)^2}$ for some positive $\lambda$. The value of $\lambda$ is determined by the condition that the curvature is equal to $1$. 
\end{proof}

\subsection{Split annulus} 
\label{sec:dA}
\begin{definition}
The {\em split annulus} is the split surface $\bA$ underlying $\dS   $, that is, $\bA=(\Rp\times \Rp)\setminus \Delta$ where the split structure $(\cL_1,\cL_2)$ is given by the fibers of the projection on the $i^{th}$ factor.
\end{definition}

\begin{proposition}\label{pro:SplitDiff}
Every $C^k$-split map of $\bA$ is of the form
$$
\Phi:(x,y)\mapsto (\varphi(x),\varphi(y))\ ,
$$
where $\varphi$ is a $C^k$-diffeomorphism (or homeomorphism for $k=0$) of $\Rp$. The map $\Phi$ is an isometry of $(\bA,g_0)$ if and only if $\varphi$ is projective. 
	
\end{proposition}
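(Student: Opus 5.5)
A split map $\Phi$ of $\bA$ sends leaves of $\cL_1$ to leaves and leaves of $\cL_2$ to leaves, possibly exchanging the two families. We restrict to orientation-preserving split maps, as in the convention of Lemma~\ref{lemma:EquivalentSplit1}. Exchanging the foliations on the annulus reverses orientation, since the swap $(x,y)\mapsto(y,x)$ does. So $\Phi$ preserves each foliation.

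The leaves of $\cL_1$ are the fibres $\{x\}\times(\Rp\setminus\{x\})$ of the first projection. Hence $\Phi$ induces a map $\varphi_1$ on the leaf space $\Rp$ of $\cL_1$, with $\Phi(x,y)=(\varphi_1(x),\ast)$. Likewise it induces $\varphi_2$ from $\cL_2$, so that $\Phi(x,y)=(\varphi_1(x),\varphi_2(y))$. Both $\varphi_i$ are bijections of $\Rp$ because $\Phi$ is a homeomorphism. They are $C^k$, or continuous, because $\varphi_1(x)=\pi_1\circ\Phi(x,y_0)$ locally for any fixed $y_0\ne x$, and similarly for $\varphi_2$.

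It remains to show $\varphi_1=\varphi_2$. The map $\Phi$ is a bijection of $\bA=\Rp^2\setminus\Delta$, so the product map $\varphi_1\times\varphi_2$ of $\Rp^2$ must preserve $\Delta$. Fix $x$ and set $z=\varphi_2^{-1}(\varphi_1(x))$. The point $(x,z)$ is sent to $(\varphi_1(x),\varphi_1(x))\in\Delta$. It therefore cannot lie in $\bA$, which forces $z=x$, i.e. $\varphi_1(x)=\varphi_2(x)$. This is the only slightly delicate point, and it is handled purely set-theoretically.

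Conversely, any map $(x,y)\mapsto(\varphi(x),\varphi(y))$ with $\varphi$ a $C^k$ diffeomorphism or homeomorphism preserves $\Delta$ and both fibrations. It is therefore a split map.

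For the isometry statement, work in an affine chart where $g_0=2\,\d x\,\d y/(x-y)^2$. Then
$$
\Phi^*g_0=\frac{2\varphi'(x)\varphi'(y)\,\d x\,\d y}{(\varphi(x)-\varphi(y))^2}.
$$
If $\varphi$ is projective, the classical identity $(\varphi(x)-\varphi(y))^2=\varphi'(x)\varphi'(y)(x-y)^2$ holds, so $\Phi$ is an isometry. Alternatively, $\mathsf{PGL}_2$ acts on $\dS$ through $\SO(Q)$ compatibly with $\Phi$.

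Conversely, suppose $\Phi$ is an isometry. First reduce to the case where $\varphi$ fixes three points: compose $\Phi$ with the projective map sending $(\varphi(0),\varphi(1),\varphi(\infty))$ back to $(0,1,\infty)$. Then use that an isometry of the connected surface $\dS$ is determined by its value and differential at one point, since isometries are determined by their 1-jet. Take the point $(0,\infty)$. The leaves through it are preserved. The scaling of the differential on each leaf is then pinned down, because $\varphi$ fixes $1$ and the leaf-wise derivatives at $0$ and $\infty$ are tied by the isometry condition. Hence $\Phi$ agrees with the identity to first order, so $\Phi=\Id$ and $\varphi$ is projective.

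A more direct alternative applies when $k\ge 3$. Take $\partial_x\partial_y\log$ of the isometry equation; this gives $\varphi'(x)\varphi'(y)/(\varphi(x)-\varphi(y))^2=1/(x-y)^2$. Expanding at $y\to x$ yields that the Schwarzian of $\varphi$ vanishes, so $\varphi$ is Möbius. The 1-jet argument has the advantage of also covering low regularity, because isometries of smooth metrics are automatically smooth.
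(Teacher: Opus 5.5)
Your proof is correct, and its first half is the paper's argument. The paper also uses preservation of each foliation to write $\Phi(x,y)=(f(x),g(y))$, and then uses that the product map sends the diagonal to itself to get $f=g$. Your version of the diagonal step, via $z=\varphi_2^{-1}(\varphi_1(x))$, is cleaner than the paper's. Your orientation remark makes explicit a convention the paper uses silently: its proof assumes $\Phi$ preserves each foliation, and the swap $(x,y)\mapsto(y,x)$ shows some such convention is needed. Strictly speaking, your claim that exchanging the foliations reverses orientation relies on the conclusion. If $\Phi$ exchanges them, the swap composed with $\Phi$ preserves both, hence equals $\varphi\times\varphi$, which is orientation-preserving.

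The paper's proof stops at that point and never addresses the isometry assertion, so your second half is an addition rather than an alternative route. The forward direction via $(\varphi(x)-\varphi(y))^2=\varphi'(x)\varphi'(y)(x-y)^2$ is fine. Your Schwarzian route for $k\geq 3$ uses exactly the expansion the paper later records in Lemma~\ref{lem:LinkWithSchwarzian}.

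In the $1$-jet route, the sentence saying the leafwise scalings are "pinned down" should be spelled out. At $(0,\infty)$ the isometry condition only gives $\varphi'(0)\,\varphi'_\infty=1$, with the derivative at $\infty$ taken in the chart $-1/y$. The dilations $x\mapsto ax$ show that this point alone cannot give more. You can close the step in either of two ways.

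\begin{enumerate}
\item Use the isometry condition also at the fixed points $(0,1)$ and $(1,\infty)$. This yields $\varphi'(1)^2=1$, hence $\varphi'(0)=\varphi'_\infty=1$, because a homeomorphism of $\Rp$ with three fixed points preserves orientation.
\item Normalize only $0$ and $\infty$. Then $\Phi$ has the same $1$-jet at $(0,\infty)$ as the projective isometry $x\mapsto\varphi'(0)x$, so the two coincide by rigidity.
\end{enumerate}
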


\begin{proof}
	Let $\Phi$ be a split homeomorphism of $\bA$ to itself, namely, $\Phi$ sends each oriented foliation to itself. Let us write $\Phi(x,y)=(f_y(x),g_x(y))$. For every $y$, $f_y$ is a homeomorphism from $\Rp\setminus\{y\}$ to $\Rp\setminus\{z\}$, for some $z \in \Rp$ which we denote $z : = \varphi (y)$. We can thus extend $f_y$ to $\Rp$ as a homeomorphism by setting $f_y(y)=\varphi(y)$. Similarly for $g_x$. As $\Phi$ is a split homeomorphism, we observe that $g_x(y)$ does not depend on $x$ and $f_y(x)$ does not depend on $y$. It follows that $\Phi(x,y)=(f(x),g(y))$. Since $\Phi$ sends the diagonal to itself, we have $f (z) = g (z) = \varphi (z)$ for all $z \in \Rp$. We also obtain that $\varphi$ is a homeomorphism of $\Rp$ as $\Phi (x,y) = (\varphi(x), \varphi(y))$ is a homeomorphism of $\bA$. This completes the proof.
\end{proof}

Observe that, unlike the hyperbolic disk, the group of conformal diffeomorphisms of $\bA$ is infinite-dimensional and thus much larger than the finite-dimensional group of isometries.

\section{Isotropic surfaces}

In this section, we consider a four-dimensional vector space $W$ equipped with a quadratic form $\bq$ of signature $(2,2)$ and denote by $\langle\cdotp,\cdotp\rangle$ the corresponding polar form. 

\subsection{The split structure of the Einstein torus}\label{ss:Einstein}
The {\em 2-dimensional Einstein Universe} or {\em Einstein Torus} is the quadric
\[\Ein  = \{ x\in \Proj(W)~,~\bq(x)= 0 \}~.\]

Let $\tau$ be the {\em tautological line bundle} over the Einstein Torus $\Ein $, that is, the line bundle whose fiber $\tau_x$ over a point $x$ is the isotropic line defined by $x$ in $W$.

The Einstein Torus is naturally equipped with a canonical split structure $(\cL_1,\cL_2)$ which we now describe. Let $(V,\omega)$ be a 2-dimensional real vector space equipped with a volume form $\omega$, and consider $(V\otimes V,\bq)$, where $\bq\defeq -\omega\otimes\omega$. 
Explicitly, we have
\[-\omega\otimes\omega(u_1\otimes u_2,v_1\otimes v_2) = - \omega(u_1,v_1)\omega(u_2,v_2)~,\]
so $-\omega\otimes\omega$ is a signature $(2,2)$ quadratic form on the $4$-dimensional vector space $V\otimes V$. Once we choose an isomorphism between $(W,\bq)$ and $(V\otimes V,-\omega\otimes\omega)$, the image of the Segre embedding
\begin{equation}\label{eq:Segre}
\function{\mathcal S:}{\Proj(V) \times \Proj(V)}{\Proj(V\otimes V)\ ,}
{([v_1],[v_2])}{[v_1\otimes v_2]\ ,} 
\end{equation}
 is exactly $\Ein $. This defines an isomorphism between $\Ein $ and $\Rp \times \Rp$. Define $\cL_i$ as the foliation of $\Ein $ whose leaves are the fibers of the projection on the $i^{\operatorname{th}}$ factor.

\subsection{Isotropic surfaces} 

An {\em isotropic surface} is an immersion $\sigma$ of a surface $S$ in $W$ whose image of every point is an isotropic vector and such that $\sigma(s)$ is transverse to the image $I_s$ of  $\T_s\sigma$. 
Equivalently,  we have for any $s$ in $S$, for any $u$ in $\T_sS$, 
$$
 \braket{\sigma,\sigma}=0\ , \dim(\textrm{span}\{\D_u\sigma,\sigma(s)\})=2\ .
$$
Observe that projecting $\sigma$ to $\Proj(W)$ defines an immersion $[\sigma]$ from $S$ to $\Ein $.  Two isotropic surfaces $\sigma_0$ and $\sigma_1$ are equivalent if $\sigma_0=\pm\sigma_1$ .

Our first result is the following.
\begin{theorem}\label{theo:isotr-metr}
Let $\phi$ be an immersion of $S$ in $\Ein $, such that $L\defeq\phi^*\tau$ is trivializable over $S$.  Let $g$ be any $C^k$-Lorentz metric on $S$ compatible with the induced split structure.  Then there exists a unique equivalence class of $C^{k}$-isotropic surface $\sigma$ from $S$ to $W$, such that $[\sigma]=\phi$ and  $$
g(u,v)=\braket{\D_u\sigma, \D_v\sigma}\ .
$$
\end{theorem}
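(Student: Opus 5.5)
Every isotropic lift of $\phi$ is a section of $L=\phi^*\tau$ with no zeros. Since $L$ is trivializable, we fix a nowhere-vanishing section $\sigma_0$; all other lifts are then $\sigma=e^{f}\sigma_0$ up to sign, with $f$ a function on $S$ (equivalence $\sigma\sim-\sigma$ accounts for the sign). The plan is to compute how the induced symmetric form $g_\sigma(u,v)\defeq\braket{\D_u\sigma,\D_v\sigma}$ transforms under $\sigma\mapsto e^f\sigma$. We will show that $g_\sigma$ is a Lorentz metric compatible with the split structure, and that $g_{e^f\sigma}=e^{2f}g_\sigma$. Granting this, the theorem follows at once: by the Remark that $\cM^k(S,\boldsymbol\sigma)$ is an affine space over $C^k(S)$, write $g=e^{2u}g_{\sigma_0}$ and take $\sigma=e^{u}\sigma_0$. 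Uniqueness holds because $e^{2f}g_{\sigma_0}=e^{2u}g_{\sigma_0}$ forces $f=u$, and any other lift differs by $\pm e^{f}$.

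\emph{Step 1: transformation rule.} Differentiating $\braket{\sigma,\sigma}=0$ gives $\braket{\sigma,\D_u\sigma}=0$. Then
$\D_u(e^f\sigma)=e^f(\d f(u)\sigma+\D_u\sigma)$, and expanding the product, every cross term contains $\braket{\sigma,\cdot}$ with $\sigma$ or $\D\sigma$, hence vanishes. So $g_{e^f\sigma}=e^{2f}g_\sigma$ exactly, with no derivative terms. This is the key simplification.

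\emph{Step 2: $g_\sigma$ is Lorentz and compatible.} This is local, so we use the Segre model $W=V\otimes V$, $\bq=-\omega\otimes\omega$. Locally $\phi=([a],[b])$ with $a,b$ curves in $V$ depending on coordinates $(x,y)$; since $\phi$ is an immersion transverse to the product splitting, we can choose local coordinates with $\phi(x,y)=([a(x)],[b(y)])$, and $a'(x)$ not proportional to $a(x)$ (similarly for $b$). Any lift is $\sigma=\lambda\, a(x)\otimes b(y)$. By Step 1 it suffices to check the case $\lambda=1$. Then $\D_x\sigma=a'\otimes b$ and $\D_y\sigma=a\otimes b'$, so
\[\braket{\D_x\sigma,\D_x\sigma}=-\omega(a',a')\omega(b,b)=0,\qquad\braket{\D_y\sigma,\D_y\sigma}=0,\]
\[\braket{\D_x\sigma,\D_y\sigma}=-\omega(a',a)\omega(b,b')\neq0.\]
Thus $g_\sigma$ is nondegenerate of signature $(1,1)$ with isotropic lines $\partial_x,\partial_y$, which are the tangent lines of $\cL_1,\cL_2$ pulled back. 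The same computation shows that $\sigma$ is transverse to $\D\sigma$, so $\sigma$ is an isotropic surface. The sign of $\braket{\D_x\sigma,\D_y\sigma}$ must be positive for a split basis, as required by the compatibility convention. It can be fixed by the orientation on $S$ (the one induced from $\Ein$ via $\phi$), and it is unchanged by rescaling $\sigma$ or by the sign $\pm$, since both $\D_x\sigma$ and $\D_y\sigma$ scale by the same factor.

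\emph{Regularity and main obstacle.} If $g$ is $C^k$ and $\phi$ is smooth, then $u$ and hence $\sigma=e^u\sigma_0$ are $C^k$, which gives the $C^k$ claim. The only delicate point is the orientation/sign bookkeeping in Step 2: we must verify that the orientation of $S$ making $g_\sigma$ compatible with $\boldsymbol\sigma$ is the one used in the split structure, rather than its opposite. If it turns out to be the opposite, replacing $\omega$ by $-\omega$ does not help, since $\bq$ is quadratic in $\omega$. In that case we would instead use the ordering convention of the factors in the Segre embedding \eqref{eq:Segre} to match the ordering of $\cL_1,\cL_2$.
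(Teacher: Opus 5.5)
Your proposal is correct and follows the paper's argument. The paper also rests on the identity $g_{e^f\sigma}=e^{2f}g_\sigma$, obtained from $\braket{\D_u\sigma,\sigma}=0$, together with the affine structure of compatible metrics over functions, which gives a bijection between sections of $L^+$ and metrics in the conformal class. Your Step 2 in the Segre model supplies checks that the paper's proof leaves implicit: that $g_\sigma$ is Lorentzian with null directions the pulled-back foliations, that $\sigma$ is transverse to $\D\sigma$, and that the sign of $\braket{\D_x\sigma,\D_y\sigma}$ is positive for the orientation induced from $\Ein$.
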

 We remark that although the formula defining $g$ seems at first sight to depend on the first derivative of $\sigma$, due to the "lightlike" nature of $\sigma$, $g$ only depends on $\sigma$ pointwise as we shall see in the proof.

\vskip 0.1 truecm
Let $\phi$ be an immersion of $S$ in $\Ein $.  Observe that the natural inclusion of $L$ in $W$ defines an immersion $i$ from the complement $L^*$ of the zero section in $L$ to $W$. We start with a remark: in the sequel,
we shall freely identify (equivalence classes of) isotropic surfaces with sections of $L$.

Now, the proof of the theorem follows from the following  lemma:

\begin{lemma}
	Assume that $L$ is trivial and let $L^+$ be a connected component of the complement of the zero section in $L$.  
	
	Then, the map which associates with a section $\sigma$ of $L^+$ the metric $g_\sigma$ defined by $g_\sigma(u,v)=\braket{\D_u\sigma,\D_v\sigma}$ is a bijection from the space $\Gamma^k(L^+)$ of $C^k$-sections to the space $\cM^k(S)$ of $C^k$-metrics on $S$ in the conformal class determined by the split structure on $S$.
\end{lemma}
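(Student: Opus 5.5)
The plan is to show first that $g_\sigma$ is a well-defined Lorentz metric compatible with the split structure, and depends only on the value of $\sigma$ at each point. Then we compute how it transforms when $\sigma$ is rescaled by a positive function. Bijectivity then reduces to the simple transitivity of $C^k(S)$ acting on $\cM^k(S,\boldsymbol\sigma)$, i.e.\ the affine structure noted in the Remark after the definition of compatible metrics.

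\textbf{Step 1: rescaling.} Fix a nonvanishing section $\sigma_0$ of $L^+$; one exists since $L$ is trivial and $L^+$ is a component of $L\setminus 0$. Every section of $L^+$ is then uniquely $\sigma=e^{f}\sigma_0$ with $f\in C^k(S)$. Differentiate:
\[
\D_u\sigma=e^f(\D_u\sigma_0+\d f(u)\sigma_0).
\]
Differentiating $\braket{\sigma_0,\sigma_0}=0$ gives $\braket{\D_u\sigma_0,\sigma_0}=0$. Together with $\braket{\sigma_0,\sigma_0}=0$, the cross terms and the $\d f\otimes\d f$ term vanish, so
\[
g_\sigma=e^{2f}g_{\sigma_0}.
\]
This also shows that $g_\sigma$ at $s$ depends only on the line $\phi(s)$ and the scale of $\sigma(s)$, which justifies the remark after Theorem~\ref{theo:isotr-metr}.

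\textbf{Step 2: $g_{\sigma_0}$ is Lorentzian and in the right conformal class.} I work pointwise in the Segre model of \S\ref{ss:Einstein}, with $W=V\otimes V$ and $\phi(s)=[v_1\otimes v_2]$. Locally $\sigma_0=a\,v_1(s)\otimes v_2(s)$ with $v_i$ depending only on the $i$th coordinate of $\Ein\cong\Rp\times\Rp$; by Step 1 the scalar $a$ is harmless, so take $a=1$.
\begin{itemize}
\item A tangent vector $u$ along $\cL_1$, where only $v_2$ moves, gives $\D_u\sigma_0=v_1\otimes \dot v_2$. Then $\bq(\D_u\sigma_0)=-\omega(v_1,v_1)\omega(\dot v_2,\dot v_2)=0$, so these directions are isotropic. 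The same holds for $\cL_2$.
\item For $u\in\cL_1$ and $w\in\cL_2$, $\braket{v_1\otimes\dot v_2,\dot v_1\otimes v_2}=-\omega(v_1,\dot v_1)\omega(\dot v_2,v_2)$. This is nonzero because $\phi$ is an immersion, so $\dot v_i\notin\R v_i$.
\end{itemize}
Hence $g_{\sigma_0}$ is a nondegenerate form of signature $(1,1)$ whose null lines are exactly the split directions pulled back by $\phi$. Its sign on a split basis is constant, hence positive after possibly replacing the chosen component $L^+$, or the ordering and orientation convention, by the other one. Regularity of $g_{\sigma}$ is $C^k$ provided $\sigma$ enters only through its value pointwise, which Step 1 establishes: $g_{e^f\sigma_0}=e^{2f}g_{\sigma_0}$ with $g_{\sigma_0}$ smooth.

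\textbf{Step 3: conclusion, and where the difficulty lies.} By Steps 1 and 2, $f\mapsto g_{e^f\sigma_0}=e^{2f}g_{\sigma_0}$ identifies $\Gamma^k(L^+)\cong C^k(S)$ with the orbit of $g_{\sigma_0}$ under the simply transitive action of $C^k(S)$ on $\cM^k(S,\boldsymbol\sigma)$, so the map is bijective. The main obstacle is the sign bookkeeping in Step 2: checking that $\braket{\D_{u_1}\sigma,\D_{u_2}\sigma}>0$ on split bases, so that the image lands in $\cM^k$ with the prescribed compatibility (positivity) convention rather than in its negative. This depends on the choice of orientation and of the identification $W\cong V\otimes V$. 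I would settle it by the explicit Segre computation above, choosing the orientation so that the sign comes out positive.
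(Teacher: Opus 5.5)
Your proposal is correct and follows the paper's proof: the paper's whole argument is your Step 1 identity $g_{e^f\sigma_1}=e^{2f}g_{\sigma_1}$ (obtained from $\langle \D_u\sigma,\sigma\rangle=0$), after which it declares the lemma immediate. Your Step 2 Segre computation usefully supplies a check the paper leaves implicit, namely that $g_{\sigma_0}$ has signature $(1,1)$ with null lines exactly the split directions. One small correction: swapping $L^+$ for the other component cannot fix the sign on split bases, because $g_{-\sigma}=g_\sigma$; only the orientation or ordering convention on $S$ can do that, which is the fix you end up adopting.
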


\begin{proof}
Let $\sigma_0$ and $\sigma_1$ be two sections of $L^+$. Let us write $\sigma_0=e^f \sigma_1$. Then 
\begin{align}
	g_{\sigma_0}(u,v)=\braket{\D_u(e^f\sigma_1),\D_v(e^f\sigma_1)}=e^{2f}\braket{\D_u\sigma_1,\D_v\sigma_1}=e^{2f}g_{\sigma_1}(u,v)\ .\label{eq:g-sigma}
\end{align}

In the second equality, we used the fact that for any split surface $\sigma$, then $\braket{\sigma,\sigma}=0$. Hence, 
 after differentiation, for any vector $u$, $\braket{\D_u\sigma,\sigma}=0$. Finally, the lemma follows immediately from equation~\eqref{eq:g-sigma}.
\end{proof}

\subsection{Dual isotropic surfaces and forms at infinity} 
\label{ss:dual_isotropic}
Given an isotropic surface $\sigma$, we call a {\em dual isotropic surface} to $\sigma$ a map $\eta$ such that 
$$
 \braket{\eta,\sigma}=1\ , \  \braket{\eta,\eta}=0\ ,  \braket{\eta,\D_u\sigma}=0\ .
$$

\begin{proposition} Let $k$ be a positive integer.
	Let $\sigma$ be a $C^k$-isotropic surface, then there exists a unique $C^{k-1}$-dual isotropic surface. 
\end{proposition}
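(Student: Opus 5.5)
Fix a point $s$ of $S$ and set $I_s\defeq \mathrm{span}\{\sigma(s),\D_u\sigma \mid u\in \T_sS\}$. Since $[\sigma]$ is an immersion and $\sigma(s)$ is transverse to the image of $\T_s\sigma$, $I_s$ is three-dimensional. The relation $\braket{\D_u\sigma,\sigma}=0$ (obtained by differentiating $\braket{\sigma,\sigma}=0$) gives $I_s\subset \sigma(s)^\bot$, and since $\sigma(s)^\bot$ is a hyperplane, $I_s=\sigma(s)^\bot$. The three conditions defining $\eta$ can then be restated as: $\eta(s)$ is isotropic, $\braket{\eta(s),\sigma(s)}=1$, and $\eta(s)\in \mathrm{span}\{\D_u\sigma\}^\bot$. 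So the plan is to solve a pointwise linear-algebra problem, and then check regularity.

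\textbf{Step 1 (a line).} Let $P_s\defeq \{\D_u\sigma \mid u\in\T_sS\}$, a two-dimensional subspace contained in $\sigma(s)^\bot$ and transverse to $\sigma(s)$. Then $P_s^\bot$ is two-dimensional and contains $\sigma(s)$, because $\braket{\sigma,\D_u\sigma}=0$. The restriction of $\bq$ to $P_s$ is the induced metric $g_\sigma$ of the previous lemma, which is Lorentzian and hence nondegenerate. Therefore $W=P_s\oplus P_s^\bot$, and $P_s^\bot$ has signature $(2,2)-(1,1)=(1,1)$. A nondegenerate plane of signature $(1,1)$ contains exactly two isotropic lines. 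One of them is $\sigma(s)$; call the other $\ell_s$. Since $P_s^\bot$ is nondegenerate and $\sigma(s)$ is isotropic, $\braket{\ell_s,\sigma(s)}\neq 0$.

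\textbf{Step 2 (existence and uniqueness pointwise).} Any $\eta(s)$ satisfying the conditions lies in $P_s^\bot$ and is isotropic, so it lies on $\sigma(s)$ or on $\ell_s$. It cannot lie on $\sigma(s)$, since then $\braket{\eta,\sigma}=0\neq 1$. Hence $\eta(s)\in\ell_s$, and the normalization $\braket{\eta(s),\sigma(s)}=1$ fixes it uniquely. This gives both existence and uniqueness.

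\textbf{Step 3 (regularity).} The subspace $P_s$ is spanned by $\D_{e_1}\sigma$ and $\D_{e_2}\sigma$ for a local frame $(e_1,e_2)$, so it depends $C^{k-1}$ on $s$, and so does $P_s^\bot$. To get an explicit formula, pick any local $C^{k-1}$ section $w$ of $P^\bot$ with $\braket{w,\sigma}=1$, for instance by orthogonally projecting a fixed vector onto $P^\bot$ and normalizing. Then
\[
\eta = w-\tfrac{1}{2}\braket{w,w}\,\sigma .
\]
This $\eta$ lies in $P^\bot$, satisfies $\braket{\eta,\sigma}=1$, and satisfies $\braket{\eta,\eta}=\braket{w,w}-\braket{w,w}=0$. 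By the uniqueness in Step 2 these local formulas agree on overlaps and glue to a global $C^{k-1}$ map.

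\textbf{Main obstacle.} The only real subtlety is the nondegeneracy of $\bq$ on $P_s$, which is what makes $P_s^\bot$ a nondegenerate complementary plane. This is exactly where the immersion and transversality hypotheses enter, through the previous lemma identifying $\braket{\D\sigma,\D\sigma}$ with a Lorentz metric. The loss of one derivative comes from $\eta$ depending on $\D\sigma$.
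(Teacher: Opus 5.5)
Your proof is correct and is essentially the paper's argument: $\operatorname{Im}\D\sigma$ and its orthogonal both have signature $(1,1)$, $\sigma$ lies in the orthogonal, and $\eta$ is the normalized vector on the other isotropic line of that orthogonal plane. Your explicit local formula $\eta=w-\tfrac12\braket{w,w}\sigma$ makes the $C^{k-1}$ regularity more transparent than the paper's one-line remark. The nondegeneracy of $\bq$ on $P_s$ can also be seen directly, without the earlier lemma: $P_s$ maps isometrically onto $\sigma(s)^\bot/\R\sigma(s)$, which has signature $(1,1)$.
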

\begin{proof}
Let us consider the two-dimensional space 
$$V(s)\defeq\operatorname{Im}\D\sigma(s)\ .
$$
Then $V$ has signature $(1,1)$ and is transverse to its $\bq$-orthogonal $V^\perp$ which also has signature $(1,1)$. The split structure on $V$ defines a split structure on $V^\perp$. We then define uniquely $\eta$ such that $(\sigma,\eta)$ is a split basis of $V^\perp$ and $\braket{\sigma,\eta}=1$. By construction, if $\sigma$ is $C^k$ then $V$ is $C^{k-1}$, hence, $\eta$ is $C^{k-1}$.
\end{proof}

We observe that if $\sigma$ is $C^2$, then $\sigma$ is the dual isotropic surface to its dual isotropic surface.

Following the terminology of Krasnov and Schlenker, we define
\begin{definition}\label{def:KrasnovSchlenker}
Given vector fields $X,Y$ on $S$, we define 
\begin{enumerate}
\item The {\em first fundamental form at infinity} as \[\I^*(X,Y)\defeq \langle\D_X\sigma,\D_Y\sigma\rangle~.\]
\item The {\em second fundamental form at infinity} as \[\II^*(X,Y)\defeq -\langle \D_X\sigma,\D_Y\eta\rangle~.\]
\item The {\em third fundamental form at infinity} as \[\III^*(X,Y)\defeq  \langle \D_X\eta,\D_Y\eta\rangle~.\]
\item The {\em shape operator at infinity} $B^*$ is defined by \[\II^*(X,Y)\defeq \I^*(\B^*(X),Y)\]
 \end{enumerate}
\end{definition} 
We then observe that \[\III^*(X,Y)\defeq \I^*(\B^*(X),\B^*(Y))\ .\]

\section{Isotropic, holonomic and Epstein surfaces}

Let $(W,\q)$ be a real vector space of four dimensions equipped with a quadratic form of signature $(2,2)$ as before.

\subsection{Anti-de Sitter geometry}
The {\em anti-de Sitter} $3$-space is
\[\AdS  = \{ x\in \Proj(W)~\vert~\q(x)<0\}\ ,\]
its double cover is
\[
\AdS_+ = \{ x\in W~\vert~\q(x)=-1\} \ ,
\]
and the covering involution $\iota$ is given by the action of $(-\Id)$ in $\SO(\bq)$.

Given a point $x$ in $\AdS_+$, the tangent space $\T_x\AdS_+$ is identified with the $\q$-orthogonal of $x$, so the restriction of $\q$ defines an $\SO(\bq)$-invariant metric of signature $(2,1)$ and curvature $-1$. Moreover, this metric is $\iota$-invariant and therefore descends to a Lorentz metric on $\AdS$.

Finally, note that the Einstein torus $\Ein $ is the boundary of $\AdS $ in $\Proj(W)$.

\subsubsection{Frame bundle} 
An {\em orthonormal frame} of a 3-dimensional oriented vector space equipped with a signature $(2,1)$ quadratic form $Q$ is an oriented basis $\epsilon=(\epsilon_1,\epsilon_2,\epsilon_3)$ of pairwise $Q$-orthogonal vectors with $Q(\epsilon_1)=Q(\epsilon_2)=-Q(\epsilon_3)=1$. Such an orthonormal frame is fully defined by the first two vectors $(\epsilon_1,\epsilon_2)$. We define in this way the {\em (orthonormal) frame bundle} $\cF(M)$ of an oriented Lorentz 3-manifold as the set of pairs $(x,\epsilon)$ where $x$ is a point of $M$ and $\epsilon=(\epsilon_1,\epsilon_2,\epsilon_3)$ is a frame of $\T_xM$. By projecting any frame $\epsilon$ to $\epsilon_1$, we obtain an $\mathsf{SO}(1,1)$-principal bundle
$$
\cF(M)\mapsto \mathsf U M\ ,
$$
where $\mathsf{U}M \defeq \{ (x,n)\in \T M~\vert ~\q(n)=1\}$ is the {\em (spacelike) unit tangent bundle} of $M$. The Levi-Civita connection of $M$ induces a natural connection on this bundle. Moreover, for $M=\AdS_+$, we observe that $\SO(\bq)$ acts simply transitively on $\cF(W)$ turning the latter into an $\SO(\bq)$ torsor.

\subsection{Unit tangent bundle, holonomic and isotropic surfaces}\label{sec:holon}

\subsubsection{Holonomic surfaces} We briefly recall basic results on jet bundles and contact geometry. We refer to \cite{Saunders:1989aa} for details.

Let $M$ be any manifold of dimension $n$, equipped with a nondegenerate inner product $g$ of type $(p,q)$. Recall that the (spacelike) unit tangent bundle is
$$
\mathsf U M=\{u\in \T M\mid g(u,u)=1\}\ .
$$ 
Later on, we will only consider the case of $M=\AdS_+$, but the discussion is better understood in full generality.

Our goal is to define holonomic surfaces in the spirit of the corresponding definition in Riemannian geometry.

We first recall the construction of the {\em contact form} or {\em Liouville form} of $\mathsf UM$. The construction runs as follows: $\T M$ inherits a symplectic form $\omega$ by duality (defined by the quadratic form) with $\T^*M$. Then the contact form (or {\em Liouville form}) on $\mathsf U M$ is $i_X\omega$ where $X$ is the vector field generating the (spacelike) geodesic flow on $\mathsf U \AdS_+$.  
 The kernel of this Liouville form is called the {\em contact distribution}: it is a field of hyperplanes in the tangent space of $\mathsf UM$. 
 
A submanifold  $\Sigma$ of dimension $n-1$ in $\mathsf{U}M$ is {\em holonomic} if it is always tangent to the contact distribution. The following classical lemma, whose proof is left to the reader and can be found in \cite[Theorem 4.3.15] {Saunders:1989aa}, helps to interpret what a holonomic submanifold is.

\begin{lemma}
Let $S$ be an immersed submanifold in $M$ of type $(p-1,q)$ and $n$ its normal vector field. Then $n(S)$ is a holonomic submanifold in $\mathsf U M$.

Conversely, if $\Sigma$ is a holonomic submanifold in $\mathsf U M$ transverse to the fiber of the projection to $M$, then $\Sigma=n(S)$ where $S$ is an immersed surface in $M$ of type $(p-1,q)$. 
\end{lemma}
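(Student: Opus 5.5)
We work in the unit tangent bundle $\mathsf U M$ and use the Sasaki-type description of its tangent spaces. Via the Levi-Civita connection, $\T_{(x,n)}\mathsf U M$ splits as a horizontal part $\cong \T_xM$ and a vertical part $\cong n^\perp\subset \T_xM$. The Liouville form, defined as $i_X\omega$ with $X$ the geodesic spray, evaluates on a tangent vector $(\xi,\nu)$ (horizontal component $\xi$, vertical component $\nu$) as $\pm g(n,\xi)$. This is the first step: compute $\omega$ on $\T(\T M)$ in this splitting, $\omega((\xi_1,\nu_1),(\xi_2,\nu_2))=g(\xi_1,\nu_2)-g(\xi_2,\nu_1)$. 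Contracting with $X=(n,0)$ gives $\lambda(\xi,\nu)=g(n,\nu)$, up to sign and up to which slot carries the symplectic pairing. Restricted to $\mathsf U M$, the vertical direction $\nu$ lies in $n^\perp$. With the correct convention one finds $\lambda_{(x,n)}(\xi,\nu)=g(n,\xi)$. I will fix conventions so that the contact distribution is $\{(\xi,\nu): g(\xi,n)=0,\ \nu\perp n\}$. Checking this sign and convention carefully is the only real computation.

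\emph{Direct statement.} Let $S$ be an immersed hypersurface of type $(p-1,q)$, so that its normal $n$ is spacelike and may be normalized with $g(n,n)=1$. The map $s\mapsto (s,n(s))$ has differential whose horizontal part is $\D f(u)$, which is tangent to $S$ and hence orthogonal to $n$. Therefore $\lambda$ vanishes on $\T n(S)$. The differential is injective since its horizontal part is, so $n(S)$ is an immersed $(n-1)$-submanifold tangent to the contact distribution, i.e.\ holonomic.

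\emph{Converse.} Let $\Sigma$ be holonomic and transverse to the fibers of $\pi:\mathsf U M\to M$. Transversality means that $\D\pi$ restricted to $\T\Sigma$ is injective: the fibers have dimension $n-1$ and $\Sigma$ has dimension $n-1$ inside a $(2n-1)$-manifold, so transversality in the sense of complementary dimension forces $\T\Sigma\cap\ker\D\pi=0$. Hence $S:=\pi(\Sigma)$ is locally an immersed hypersurface and $\Sigma$ is locally the graph of a field $s\mapsto n(s)$ along $S$. Holonomy now says $g(n,\D\pi(v))=0$ for every $v\in\T\Sigma$, i.e.\ $n\perp \T S$. Since $g(n,n)=1$, $n$ is a spacelike unit normal. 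The restriction of $g$ to $n^\perp=\T S$ is then nondegenerate of signature $(p-1,q)$, so $S$ has the stated type and $\Sigma=n(S)$.

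The main obstacle is establishing the formula $\lambda(\xi,\nu)=g(n,\xi)$ from the definition $i_X\omega$ in the pseudo-Riemannian setting. I would first check it in local coordinates on $\T^*M$, using $\omega=\d(p\,\d q)$ pulled back by the musical isomorphism. There $i_X\omega$ restricted to the level set $g(v,v)=1$ differs from the tautological form $g(v,\D\pi\,\cdot)$ only by a multiple of $\d(g(v,v))$, which vanishes on $\mathsf U M$.
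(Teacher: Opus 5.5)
There is a genuine gap, and it is at the step you yourself call ``the only real computation'': identifying the contact form. Your own computation already shows the problem. Contracting $\omega$ with the geodesic spray $X=(n,0)$ gives $g(n,\nu)$. On $\T\mathsf U M$ the vertical component always satisfies $\nu\perp n$, so $i_X\omega$ vanishes identically on $\mathsf U M$.

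No choice of sign, or of which slot carries the pairing, turns $g(n,\nu)$ into $g(n,\xi)$. So the sentence ``with the correct convention one finds $\lambda(\xi,\nu)=g(n,\xi)$'' is unjustified. The coordinate check in your last paragraph would also fail. The geodesic spray is the Hamiltonian vector field of $E=\tfrac12 g(v,v)$, so $i_X\omega=\pm\d E$ exactly, and this restricts to $0$ on $\mathsf U M$. It is not the tautological form $\theta(V)=g(v,\D\pi V)$ modulo $\d(g(v,v))$: for instance $(i_X\omega)(X)=\omega(X,X)=0$, whereas $\theta(X)=g(n,n)=1$. Taken literally, this definition would make every $(n-1)$-submanifold of $\mathsf U M$ holonomic, and the converse statement would be false.

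The missing idea is to contract $\omega$ with the Liouville (radial) vector field $Y=(0,n)$, the vertical lift generating fibrewise dilations. Unlike $X$, it is transverse to $\mathsf U M$. With your formula, $\omega((0,n),(\xi,\nu))=-g(n,\xi)$, so $i_Y\omega=-\theta$. Its kernel on $\T\mathsf U M$ is $\{(\xi,\nu): g(\xi,n)=0\}$. Equivalently, the contact form is the dual of the geodesic spray for the Sasaki metric, not for $\omega$.

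This is also what the paper really uses. It gives no proof of the lemma, only a citation of Saunders, and its sentence about $i_X\omega$ with $X$ the geodesic flow is a slip. But its explicit contact distribution $\{(u_1,u_2): \langle n,u_1\rangle=\langle x,u_2\rangle=0\}$ on $\mathsf U\AdS_+$, and the later $1$-form $n^*(u)=\langle u_1,n\rangle$, are exactly $\ker\theta$ and $\theta$.

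Once you take $\theta|_{\mathsf U M}$ as the contact form, your direct and converse arguments go through as written. Read ``transverse to the fibre'' as $\T\Sigma\cap\ker\D\pi=0$, which is the only meaning compatible with the dimensions. In the converse, use $\T S\subset n^\perp$ with equal dimensions to get $\T S=n^\perp$, which is nondegenerate of type $(p-1,q)$ because $g(n,n)=1$.
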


We call a holonomic surface of the form $n(S)$, where $S$ is a $(1,1)$-surface in $M$, a {\em typical holonomic surface associated with $S$}.

We will sometimes keep track of the type of the underlying submanifold and speak about $(p-1,q)$-holonomic submanifold.

\subsubsection{Back to anti-de Sitter geometry}
We now specify this discussion for the (spacelike) {\em unit tangent bundle} of $\AdS_+$.

We have the following identification:
\begin{equation}\label{eq:SplittingUnitTangent1}
\mathsf{U}\AdS_+ = \{(x,n)\in W\times W~\vert~\langle x,n\rangle=0~,~\q(x)=-\q(n)=-1\}~.
\end{equation}
Under this identification, the tangent space splits as
\begin{equation}\label{eq:SplittingUnitTangent2}
\T_{(x,n)}\mathsf{U}\AdS_+ = \left\{(u_1,u_2)\in W\times W~\vert~ \langle u_1,x\rangle=\langle u_2,n\rangle = \langle u_1,n\rangle+\langle x,u_2\rangle=0\right\}~.
\end{equation} 

The involution $\iota$ lifts to $\mathsf{U}\AdS_+$, and the quotient is $\mathsf{U}\AdS $, the unit tangent bundle of $\AdS $.
In this identification again, the contact distribution is given by
$$
\ker(\lambda_{(x,n)})=\{(u_1,u_2)\mid \braket{n,u_1}=\braket{x,u_2}=0\} .
$$

\subsubsection{Holonomic and isotropic surfaces}
Finally, in this situation, holonomic surfaces are identified with isotropic surfaces. A direct computation gives the following proposition.

\begin{proposition}\label{pro:holono-isotr}
	If $\sigma$ defines an isotropic surface, let $\eta$ be the dual isotropic surface. Then
	\[
	\function{(x,n):}{S}{\mathsf{U}\AdS_+}{s}{\left(\frac{\sqrt{2}}{2}(\sigma(s)-\eta(s)),\frac{\sqrt{2}}{2}(\sigma(s)+\eta(s)\right))}
	\]
	is a holonomic surface.
	
	Conversely, if $(x,n)$ is a holonomic surface then 
	\[
	\function{(\sigma,\eta):}{S}{W\times W}{s}{\left(\frac{\sqrt{2}}{2}(x(s)+n(s)),\frac{\sqrt{2}}{2}(n(s)-x(s))\right)}
	\]
	is such that $\sigma$ is an isotropic surface and $\eta$ is the dual isotropic surface.
	It follows that any isotropic surface gives a holonomic surface.	
\end{proposition}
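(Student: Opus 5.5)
The plan is to check the defining conditions directly, working pointwise in $W$ and using only the identities satisfied by $\sigma$ and $\eta$. The identities are
\[
\langle\sigma,\sigma\rangle=\langle\eta,\eta\rangle=0,\quad \langle\sigma,\eta\rangle=1,\quad \langle \eta,\D_u\sigma\rangle=0,
\]
together with the consequences obtained by differentiating them. From $\langle\sigma,\sigma\rangle=0$ we get $\langle\D_u\sigma,\sigma\rangle=0$, and from $\langle\eta,\eta\rangle=0$ we get $\langle \D_u\eta,\eta\rangle=0$. Differentiating $\langle\sigma,\eta\rangle=1$ gives $\langle\D_u\sigma,\eta\rangle+\langle\sigma,\D_u\eta\rangle=0$. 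Since $\langle\D_u\sigma,\eta\rangle=0$, this yields $\langle\sigma,\D_u\eta\rangle=0$.

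For the first direction, set $x=\tfrac{\sqrt2}{2}(\sigma-\eta)$ and $n=\tfrac{\sqrt2}{2}(\sigma+\eta)$. The pointwise conditions of \eqref{eq:SplittingUnitTangent1} are immediate. Indeed, $\q(x)=\tfrac12(-2\langle\sigma,\eta\rangle)=-1$ and $\q(n)=1$. Also $\langle x,n\rangle=\tfrac12(\q(\sigma)-\q(\eta))=0$. So $(x,n)$ maps $S$ into $\mathsf U\AdS_+$.

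For holonomy, we must show that $\langle n,\D_u x\rangle=0$ and $\langle x,\D_u n\rangle=0$ for every tangent vector $u$. Expanding, $2\langle n,\D_ux\rangle$ is a combination of the four pairings $\langle\sigma,\D_u\sigma\rangle$, $\langle\sigma,\D_u\eta\rangle$, $\langle\eta,\D_u\sigma\rangle$ and $\langle\eta,\D_u\eta\rangle$. All four vanish by the identities listed above. The same holds for $\langle x,\D_u n\rangle$, so the image is tangent to the contact distribution.

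It remains to check that the image is a surface, i.e.\ that $(x,n)$ is an immersion. This holds because $\D\sigma=\tfrac{\sqrt2}{2}(\D x+\D n)$ and $\sigma$ is an immersion. I expect this to be the only point needing a word, and it is also the point where the regularity of $\eta$ (one derivative less than $\sigma$) enters.

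For the converse, start from a holonomic $(x,n)$ and define $\sigma=\tfrac{\sqrt2}{2}(x+n)$ and $\eta=\tfrac{\sqrt2}{2}(n-x)$. The same algebra gives $\q(\sigma)=\tfrac12(\q(x)+\q(n))=0$, $\q(\eta)=0$ and $\langle\sigma,\eta\rangle=\tfrac12(\q(n)-\q(x))=1$.

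The only remaining duality condition is $\langle\eta,\D_u\sigma\rangle=0$. We have $2\langle\eta,\D_u\sigma\rangle=\langle n-x,\D_ux+\D_un\rangle$. The terms $\langle n,\D_u n\rangle$ and $\langle x,\D_u x\rangle$ vanish because $\q(n)$ and $\q(x)$ are constant. The terms $\langle n,\D_ux\rangle$ and $\langle x,\D_un\rangle$ vanish by holonomy.

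The main obstacle is showing that $\sigma$ is an isotropic surface in the sense of the definition. We need $\sigma$ to be an immersion with $\sigma(s)$ transverse to $\operatorname{Im}\D\sigma$. I would first show that $\D\sigma$ is injective, using that $\Sigma$ is a surface in the contact distribution together with a nondegeneracy argument. Transversality then follows because $\operatorname{Im}\D\sigma\subset\sigma^\perp\cap\eta^\perp$, while $\langle\sigma,\eta\rangle=1\neq0$ shows that $\sigma\notin\eta^\perp$.

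Injectivity of $\D\sigma$ may fail in degenerate cases. If it does, I would adopt it as an implicit nondegeneracy assumption of the converse statement. Uniqueness of the dual then identifies $\eta$ as the dual isotropic surface of $\sigma$. The final sentence of the statement follows from the first direction.
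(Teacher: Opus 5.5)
Your computation is the same direct verification the paper has in mind; the paper says only ``a direct computation'' and gives no further detail. All the pairing identities you use in both directions check out. So does the immersion argument for $(x,n)$ via $\D\sigma=\tfrac{\sqrt2}{2}(\D x+\D n)$, and so does the transversality argument from $\operatorname{Im}\D\sigma\subset\sigma^\perp\cap\eta^\perp$ together with $\langle\sigma,\eta\rangle=1$.

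The one step to correct is your plan to derive injectivity of $\D\sigma$ in the converse from holonomy plus a nondegeneracy argument. This cannot work, because the claim is false in general. Take an isotropic $x_0$ and the horosphere $H(x_0)=\{p\in\AdS_+ : \langle x_0,p\rangle=-\tfrac{\sqrt2}{2}\}$, which is a $(1,1)$-surface, with unit normal $n(p)=\sqrt2\,x_0-p$. Then $(p,n(p))$ is an immersed holonomic surface whose tangent vectors are $(u,-u)$ with $u\in\{x_0,p\}^\perp$. However, $\sigma=\tfrac{\sqrt2}{2}(p+n)=x_0$ is constant, so it is not an isotropic surface.

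So the converse as stated needs an extra hypothesis, and your fallback is necessary rather than optional. Since $\operatorname{Im}\D\sigma$ lies in the $(1,1)$-plane $\{\sigma,\eta\}^\perp$, injectivity of $\D\sigma$ is equivalent to nondegeneracy of the first fundamental form at infinity $\I^*$. Equivalently, $\Sigma$ has no nonzero tangent vector $(U_1,U_2)$ with $U_1=-U_2$. For $\Sigma=n(S)$ with tangent vectors $(u,B(u))$, this says $-1$ is not an eigenvalue of $B$. Under that assumption, your transversality argument and the uniqueness of the dual complete the proof.
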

The following proposition identifies the first fundamental form at infinity.

\begin{proposition}\label{pro:first-infinity}
	The first fundamental form at infinity of a typical holonomic surface associated with a surface $S$ in $\AdS$ is $\frac{1}{2}(\I +2\II +\III)$ where $\I$, $\II$ and $\III$ are respectively the first, second, and third fundamental forms of $S$.
\end{proposition}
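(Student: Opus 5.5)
Let $S$ be a $(1,1)$-surface in $\AdS_+$ given by an immersion $x:S\to W$ with $\q(x)=-1$, and let $n$ be its unit (spacelike) normal, so $(x,n)$ is the typical holonomic surface. By Proposition~\ref{pro:holono-isotr}, the associated isotropic surface is $\sigma=\frac{\sqrt2}{2}(x+n)$ with dual $\eta=\frac{\sqrt2}{2}(n-x)$. The plan is to compute $\I^*(X,Y)=\langle \D_X\sigma,\D_Y\sigma\rangle$ directly and express it in terms of the classical fundamental forms. Then $\I^*$ is
\[
\tfrac12\bigl(\langle \D_Xx,\D_Yx\rangle+\langle \D_Xx,\D_Yn\rangle+\langle \D_Xn,\D_Yx\rangle+\langle \D_Xn,\D_Yn\rangle\bigr).
\]

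The key step is to identify each term. Since $\langle x,n\rangle=0$, $\q(n)=1$ and $\q(x)=-1$, the derivative $\D_Xn$ is orthogonal to $n$. Differentiating $\langle x,n\rangle=0$ and $\langle \D_Yx,n\rangle=0$ gives $\langle \D_Xn, x\rangle=-\langle n,\D_X x\rangle=0$. Hence $\D_Xn$ lies in $\T_x S=\operatorname{span}(\D x)$, so it is the covariant derivative of $n$ in $\AdS_+$ (the flat derivative and the Levi-Civita derivative differ only by a multiple of $x$, which vanishes here). We therefore have $\D_Xn=-B(X)$ or $+B(X)$, depending on the sign convention for the shape operator $B$. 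Then $\langle\D_Xx,\D_Yx\rangle=\I(X,Y)$, $\langle\D_Xn,\D_Yn\rangle=\I(BX,BY)=\III(X,Y)$, and each cross term equals $\pm\I(BX,Y)=\pm\II(X,Y)$. The cross terms are symmetric because $B$ is $\I$-selfadjoint, equivalently because differentiating $\langle \D_Y x,n\rangle=0$ gives $\langle \D_Xx,\D_Yn\rangle=-\langle \D^2_{X,Y}x,n\rangle$.

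The only genuine point to check, and what I expect to be the main obstacle, is the sign convention. The claim $\frac12(\I+2\II+\III)$ requires $\II(X,Y)=\langle \D_Xx,\D_Yn\rangle$, i.e.\ $\II(X,Y)=-\langle \D^2_{X,Y}x,n\rangle$ with $\D_Xn=B(X)$. I will fix this convention for $\II$ and $B$ explicitly, consistent with the paper's sign in $\II^*=-\langle \D\sigma,\D\eta\rangle$. As a consistency check, $\II^*$ computed from $\sigma,\eta$ then becomes $-\frac12(\III-\I)$, matching the Krasnov--Schlenker pattern. With the opposite convention the formula becomes $\frac12(\I-2\II+\III)$, which corresponds to choosing the opposite normal. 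Summing the four terms then gives $\I^*=\frac12(\I+2\II+\III)$, which proves the proposition.
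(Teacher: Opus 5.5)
Your proposal is correct and follows the paper's proof: expand $\langle \D_u\sigma,\D_u\sigma\rangle$ with $\sigma=\frac{\sqrt{2}}{2}(x+n)$ and read the three terms as $\I$, $2\II$, $\III$. The paper implicitly uses your convention $\D_u n=B(u)$, which is the one in the proof of Proposition~\ref{pro:class-formula}; your extra checks that $\D_X n$ is tangent to $S$ and equals the shape operator are fine, but your $\II^*$ consistency check cannot fix the sign of $\II$, since $\frac{1}{2}(\I-\III)$ does not involve $\II$.
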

\begin{proof}
	We have by the previous proposition that $\sigma$ is given by $\frac{\sqrt{2}}{2}(x+n)$. Thus
\begin{align}
	\braket{{\rm D}_u\sigma, {\rm D}_u\sigma}
	&=\tfrac{1}{2}\left(\braket{{\rm D}_ux, \D_ux}+2\braket{{\rm D}_ux, {\rm D}_un}+\braket{{\rm D}_un, {\rm D}_un}\right)\\
	&=\tfrac{1}{2}(\I(u,u) +2\II(u,u) +\III(u,u))\ .
\end{align}
This completes the proof.
\end{proof}

\begin{proposition}
	Let $S$ be a holonomic surface immersed in $\U\AdS$ that we consider as a subset of $W\times W$. The first fundamental form at infinity on $S$ is the induced metric from the metric on $W\times W$ given by the quadratic form
	$$
	Q((u,v))=\tfrac{1}{2}\braket{u+v,u+v}\ .	$$
\end{proposition}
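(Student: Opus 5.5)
The plan is to make the correspondence of Proposition~\ref{pro:holono-isotr} linear and then compute. Regard the holonomic surface as an immersion $s\mapsto (x(s),n(s))$ into $W\times W$. Then Proposition~\ref{pro:holono-isotr} gives the associated isotropic surface
\[
\sigma=\tfrac{\sqrt2}{2}(x+n)\ .
\]
The key observation is that $\sigma$ is obtained by composing the immersion with the linear map
\[
\Lambda:(u,v)\mapsto \tfrac{\sqrt2}{2}(u+v)\ .
\]
Since $\Lambda$ is linear, its differential along the surface is $\Lambda$ itself. Hence for any tangent vector $X$ to $S$,
\[
\D_X\sigma=\Lambda(\D_X x,\D_X n)\ .
\]

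Next, apply Definition~\ref{def:KrasnovSchlenker}: the first fundamental form at infinity is
\[
\I^*(X,Y)=\braket{\D_X\sigma,\D_Y\sigma}\ .
\]
Substituting the expression above gives
\[
\I^*(X,Y)=\tfrac12\braket{\D_Xx+\D_Xn,\ \D_Yx+\D_Yn}\ .
\]
The right-hand side is exactly the polarization of $Q((u,v))=\tfrac12\braket{u+v,u+v}=\braket{\Lambda(u,v),\Lambda(u,v)}$, that is, the pullback $\Lambda^*\braket{\cdot,\cdot}$, evaluated on the tangent vectors $(\D_Xx,\D_Xn)$ and $(\D_Yx,\D_Yn)$ of the surface in $W\times W$. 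So $\I^*$ is the metric induced by $Q$. The computation is the same as the one in the proof of Proposition~\ref{pro:first-infinity}, but it no longer uses any decomposition into $\I,\II,\III$.

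One point needs care: the statement concerns the quotient $\U\AdS$, not $\U\AdS_+$. Lift locally to $\U\AdS_+\subset W\times W$. The two lifts differ by $\iota=-\Id$ acting diagonally, which changes $\sigma$ to $-\sigma$. This is precisely the equivalence $\sigma\sim\pm\sigma$ of isotropic surfaces. Since $Q$ is invariant under $(u,v)\mapsto(-u,-v)$, both the induced form and $\I^*$ are well defined independently of the lift.

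I do not expect any real obstacle here. The only subtle point is making sure that $\sigma=\tfrac{\sqrt2}{2}(x+n)$ is the isotropic surface attached to $(x,n)$ by the converse direction of Proposition~\ref{pro:holono-isotr}. That identification uses the holonomic conditions $\braket{n,\D x}=\braket{x,\D n}=0$ to ensure $\sigma$ is isotropic with dual $\eta=\tfrac{\sqrt2}{2}(n-x)$. Once this is granted, the result is purely formal linear algebra.
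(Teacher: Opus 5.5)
Your proposal is correct and essentially the paper's own proof: both take $\sigma=\tfrac{\sqrt2}{2}(x+n)$ from Proposition~\ref{pro:holono-isotr} and expand $\braket{\D_X\sigma,\D_Y\sigma}$ as $\tfrac12\braket{\D_Xx+\D_Xn,\D_Yx+\D_Yn}$. Your check that the result does not depend on the choice of local lift to $\U\AdS_+$ is a correct addition that the paper leaves implicit.
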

\begin{proof}
	Indeed, the first fundamental form on $S$ is given by 
	$$\tfrac{1}{2}\left(\braket{{\rm D}_ux, \D_ux}+2\braket{{\rm D}_ux, {\rm D}_un}+\braket{{\rm D}_un, {\rm D}_un}\right)=\tfrac{1}{2}\braket{{\rm D}_ux + {\rm D}_un,{\rm D}_ux+ {\rm D}_un}\ .$$ The result follows.
\end{proof}

\subsection{Epstein surface}

\begin{definition} Let $\phi$ be an immersion of $S$ in $\Ein$ such that $\phi^*\tau$ is trivial. Let $g$ be a $C^2$ metric on $S$. Then the {\em $g$-Epstein surface} $\epsilon_g$ is the holonomic surface in $\AdS_+$ associated (by Proposition \ref{pro:holono-isotr}) to the isotropic surface $\sigma$ defined by $g$ (by Theorem \ref{theo:isotr-metr}). 	
\end{definition}

We now relate this definition for the sake of completeness to the notion of envelope of horospheres, thus showing that our  Epstein surfaces are the analogs in $\AdS$ of the classical Epstein surfaces constructed in the hyperbolic $3$-space. 

\subsubsection{Horospheres}

Any isotropic vector $x_0$ in $W$ defines a {\em horosphere} in $\AdS_+$ via the formula
\[H(x_0) \defeq \left\{ p\in \AdS_+ ~\vert~ \langle x_0 , p\rangle = -\tfrac{\sqrt 2}{2} \right\}~.\]

A horosphere in $\AdS $ is then the projection of a horosphere in $\AdS_+$. Observe that the vectors $\pm x_0$ define the same horospheres in $\AdS $.

The tangent space to $H(x_0)$ at a point $p$ is identified with $\text{span}\{x_0,p\}^\bot$ and so the induced metric on $H(x_0)$ is Lorentzian. Moreover, $H(x_0)$ is intrinsically flat.

\subsubsection{Envelope}\label{sec:env} Let $S$, $\phi$, $g$ and $\sigma$ be as in the definition.
We then have a family of horospheres
$$
\cH(g)=\{H(\sigma(s))~\vert~s\in S\}.
$$
An {\em envelope} of $\cH(g)$ is then a smooth map $\epsilon_g$ from $S$ to $\AdS $ such that for every $x$

\[\epsilon_g(x) \in H(\sigma(x)) ~\text{ and }~\d_x \epsilon_g(\T_xS) \subset \T_{\epsilon_g(x)}H(\sigma(x))~.\]

\begin{proposition}
	Given a $g$-Epstein surface $\psi$, the map $\pi\circ\psi$ is an envelope for $\cH(g)$, where $\pi$ is the projection to $\AdS$.
\end{proposition}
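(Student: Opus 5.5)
The plan is to check the two defining conditions of an envelope directly, using the explicit form of the holonomic surface from Proposition~\ref{pro:holono-isotr}. Write the $g$-Epstein surface as $\psi=(x,n)$, where $x=\frac{\sqrt2}{2}(\sigma-\eta)$ and $\eta$ is the dual isotropic surface of $\sigma$. Then $\pi\circ\psi$ is the projection of $x$ to $\AdS$. It is enough to work in $\AdS_+$ with the horosphere $H(\sigma(s))$, since the horospheres of $\pm\sigma$ agree in $\AdS$.

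\textbf{First condition} ($x(s)\in H(\sigma(s))$). The relations $\braket{\sigma,\sigma}=0$ and $\braket{\sigma,\eta}=1$ give
\[
\braket{\sigma,x}=\tfrac{\sqrt2}{2}\bigl(\braket{\sigma,\sigma}-\braket{\sigma,\eta}\bigr)=-\tfrac{\sqrt2}{2}.
\]
Moreover $\q(x)=\tfrac12\bigl(0-2\braket{\sigma,\eta}+0\bigr)=-1$, so $x$ lies in $\AdS_+$. Hence $x(s)\in H(\sigma(s))$.

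\textbf{Second condition} (tangency). The tangent space of $H(\sigma(s))$ at $x(s)$ is $\mathrm{span}\{\sigma(s),x(s)\}^\perp$. So we need to show that $\braket{\D_u x,x}=0$ and $\braket{\D_u x,\sigma}=0$ for every tangent vector $u$.
\begin{itemize}
\item The first identity follows by differentiating $\q(x)=-1$.
\item For the second, differentiate the identity $\braket{\sigma,x}=-\tfrac{\sqrt2}{2}$ just established. This gives $\braket{\D_u\sigma,x}+\braket{\sigma,\D_u x}=0$. Then compute
\[
\braket{\D_u\sigma,x}=\tfrac{\sqrt2}{2}\bigl(\braket{\D_u\sigma,\sigma}-\braket{\D_u\sigma,\eta}\bigr)=0,
\]
using $\braket{\D_u\sigma,\sigma}=0$ (from differentiating $\braket{\sigma,\sigma}=0$) and $\braket{\eta,\D_u\sigma}=0$ (from the definition of the dual surface). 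Therefore $\braket{\sigma,\D_u x}=0$.
\end{itemize}
Alternatively, the holonomic condition $\braket{n,\D_u x}=0$ together with $\sigma=\frac{\sqrt2}{2}(x+n)$ gives the same conclusion.

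The only point needing care is regularity: $\eta$ is only $C^{k-1}$, so with $g$ of class $C^2$ the map $x$ is $C^1$. That is enough to differentiate. There is no real obstacle here; the argument is a direct computation.
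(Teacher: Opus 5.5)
Your proof is correct and follows the paper's argument. Both check $\braket{\sigma,x}=-\tfrac{\sqrt2}{2}$ from the dual-surface relations, then verify that $\D_u x$ is orthogonal to the plane spanned by $\sigma$ and $x$; the paper phrases this as orthogonality to $\sigma$ and $\eta$, which span the same plane. You also make explicit the facts $\q(x)=-1$ and $C^1$ regularity, which the paper leaves implicit.
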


\begin{proof}
Let $g$, $\sigma$ and $S$ be as in the definition, and let $\eta$ be the dual isotropic surface to $\sigma$ as in paragraph \ref{ss:dual_isotropic}.	
In particular, for any vector fields $X,Y$ on $S$ we have
\[ \langle \sigma,\eta \rangle -1 = \langle \sigma,\sigma \rangle = \langle \eta,\eta\rangle = \langle \D_X\sigma, \eta\rangle = 0 ~ \ \text{ and }~ \langle \D_X\sigma,\D_Y\sigma\rangle=g(X,Y)~.\]
We have
\[\function{
\pi\circ \psi:}{S}{\AdS  \ , }{x}{\frac{\sqrt 2}{2}(\sigma(x)-\eta(x))\ ,}\]
so
\[\langle \sigma(x) , \pi\circ\psi(x)\rangle = -\tfrac{\sqrt{2}}{2}~,\]
and for any tangent vector $u$ in $\T_x S$ 
\[\langle \d_x(\pi\circ \psi) (u) , \sigma(x) \rangle = \langle \d_x(\pi\circ \psi) (u) , \eta(x) \rangle = 0~. \]
So $\pi\circ\psi$ is an envelope for $\cH(g)$.
\end{proof}

One can easily see from the above computation that the induced metric on $\pi\circ \psi$ is equal to $\frac{1}{2} \I^* + \II^* + \frac{1}{2} \III^*$. In particular, it is not always immersed. We denote by $\Sigma_g$ its image.

\section{The $\cW$-volume}

We define in this section the $\cW$-volume for  oriented  3-manifolds with boundary in $\AdS$,  and more generally for the equivariant situation. As in \cite{Bridgeman:2025aa}, we work in a noncompact setting and this requires some technical adjustments. Also the $\cW$-volume is not actually defined for 3-manifolds with boundary in $\AdS$, but rather for 3-manifolds with boundary immersed in $\U\AdS$ with some holonomy condition on the boundary.
\subsection{Preliminary: the geometry of $\U\AdS$ and cobordism} 
\subsubsection{Differential forms on $\mathsf U\AdS$} The unit tangent bundle $\mathsf{U} \AdS $ is equipped with a set of differential forms that we now describe. We use the decomposition described in equation \eqref{eq:SplittingUnitTangent2} and write $u=(u_1,u_2)$ for the two components of a vector $u$ in $\T_{(x,n)}\left(\mathsf{U}\AdS_+\right)$. We introduce and consider the following forms.

\begin{enumerate}
	\item The pull-back $\omega$ of the volume form on $\AdS_+$ via the projection is a closed $3$-form on $\mathsf{U}\AdS_+$ whose value at $(x,n)$ is given by
	\[\omega(u,v,w)= \pi^*\vol_{\AdS}(u,v,w) = \det(x,u_1,v_1,w_1)~.\]

	\item The $2$-form  $\alpha$ whose value at $(x,n)$ is given by
	\begin{align*}
\alpha(u,v) & =	\ \frac{1}{4}\left( \det(x,n,u_2,v_1) +\det(x,n,u_1,v_2) \right)~.	\end{align*}
\end{enumerate}
\vskip 0.2 truecm
We remark that all these differential forms are invariant under the involution $\iota$ and so descend to forms on $\mathsf{U}\AdS $ that we denote the same way.

\subsubsection{Cobordism constant outside a compact}\label{sec:equal-at-infinity}

As a first example of cobordism constant outside a compact,  we have  a  $C^k$ map $\phi$ from $M$ into $\mathsf U \AdS_+$, where
\begin{itemize}
\item $M = S \times [0,1]$ and $S$ is a possibly noncompact   oriented  surface,
\item $\phi(S \times \{0\})$ and $\phi (S \times \{1\})$ are holonomic surfaces, 
\item there exists $K$ a compact subset of $S$, such that $\phi(x,t)$ is constant in $t$ for all $x$ not in $K$. \end{itemize}
More generally, we allow the compact $K$ to have some more complicated topology   and we furthermore consider the equivariant case.

We first recall what we mean by equivariance:
let $N$ be a manifold, $\tilde N$ a Galois cover, $X$ a space equipped with a $\mG$-action, where $\mG$ is a Lie group. 
We recall that an {\em equivariant map} 
from $N$ to $X$ is a couple $(\phi,\rho)$ where $\phi$ 
is a map from $\tilde N$ to $X$, $\rho$ is a group morphism from the Galois group to $\mG$ satisfying 
$$\phi(\gamma\ x)=\rho(\gamma)\phi(x)\ .$$
The choice of the Galois cover is irrelevant: one can always replace the Galois cover by the universal cover. This general definition allows us to restrict any equivariant map on $M$ to a submanifold of $M$.

Now let $N_0$ and $N_1$ be two oriented surfaces, and let $(\phi_0,\rho_0)$ and $(\phi_1,\rho_1)$ be equivariant maps from $N_0$ and $N_1$ respectively to $\mathsf U \AdS_+$ whose images are holonomic surfaces.
A {\em cobordism constant outside a compact}  between  $(N_0,\phi_0,\rho_0)$ and $(N_1,\phi_1,\rho_1)$ is a triple  $(M,\phi,\rho)$, where  
\begin{enumerate}
	\item $M$ is a possibly noncompact oriented 3-manifold with boundary $\partial M= N_0\sqcup \overline N_1$.
	\item $(\phi,\rho)$ is a $C^\infty$ equivariant map from $M$ to ${\mathsf U} \AdS_+$, such that $\phi\vert_{N_i}=\phi_i$ and $\rho\vert_{N_i}=\rho_i$.
	\item Moreover $\phi$ is {\em constant outside a compact}: there is a compact $K$ in $M$ such that $K^c$ is homeomorphic to $U\times[0,1]$, where $U\times\{0\}$, respectively $U\times\{1\}$, is a subset of $N_0$, respectively $N_1$, and $\phi$ is constant in the second factor of $U\times[0,1]$.
\end{enumerate} 

Note that $\phi_1$ and $\phi_0$ agree outside a compact set. Observe that given a cobordism as above $\phi^*\tau$ is a trivial bundle since it is defined on the universal cover of $N$.

\subsection{Differential forms and the $\cW$-volume}
 In this paragraph we choose an equivariant map with values in $X=\mathsf U \AdS_+$ with $\mG$ being the group of isometries of $\AdS_+$. 
\begin{definition}
We define the {\em $\cW$-volume} of the  cobordism $(M,\phi,\rho)$, 
\[\cW(M,\phi) = \int_M \phi^*\omega - \int_{\partial M} \phi^*\alpha~.\]
In this equation, we first observe that $\phi^*\omega$ vanishes outside a compact set. We also use an abuse of language for the last term, since $\phi\vert_{U\times\{0\}}$ and $\phi\vert_{U\times\{1\}}$ agree outside a compact set.  We also observe that $\phi^*\omega$ and $\phi^*\alpha$ being invariant by the Galois group are actually defined on $M$.	
\end{definition}
Because $\omega$ is closed, 
$$
\cW(M,\phi)=\cW(M,\psi)\ ,
$$
whenever $\phi$ and $\psi$ agree outside a compact set in $\operatorname{Int}(M)$. In particular, for a lens cobordism, where we write $\partial M=S_0\sqcup S_1$, $\cW(M,\phi)$ depends only on the restriction of $\phi_0$ on $S_0$, and $\phi_1$ on $S_1$. In that situation, we write
\begin{align}
\cW(S_0,S_1)\defeq\cW(M,\phi)\ \label{eq:defCWSS}.	
\end{align}
We will most of the time write $\cW(M,\phi)\eqdef \cW(M)$. Then note that since one can glue cobordisms, the $\cW$-volume satisfies {\em Chasles relation}
$$
\cW(M_0\sharp M_1)=\cW(M_0)+\cW(M_1)\ .
$$
\subsubsection{Classical formula}

\begin{proposition}\label{pro:class-formula}
Let $(M,\phi,\rho)$ be a cobordism. Assume that the holonomic surface $\phi(\partial M)$ is the set of unit vectors of a  surface $S$ in $\AdS_+$.
 Then
\[\cW(M,\phi) = \vol(\pi\circ\phi(M)) - \frac{1}{2}\int_{S} H\ \d a~, \]
where  $H$ and $\d a$ are, respectively, the mean curvature and the area form of $S$. 
\end{proposition}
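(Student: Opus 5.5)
The plan is to treat the two terms of $\cW(M,\phi)=\int_M\phi^*\omega-\int_{\partial M}\phi^*\alpha$ separately.

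\emph{The volume term.} Since $\omega=\pi^*\vol_{\AdS}$, we have $\int_M\phi^*\omega=\int_M(\pi\circ\phi)^*\vol_{\AdS}$. This is the algebraic (oriented) volume of $\pi\circ\phi(M)$, which we interpret as $\vol(\pi\circ\phi(M))$ with the orientation conventions of the cobordism. Outside a compact set $\phi$ is constant in the $[0,1]$ direction, so the integrand vanishes there and the integral is finite.

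\emph{The boundary term.} The substance of the proof is a pointwise identity: for a typical holonomic surface $n(S)$,
\[(x,n)^*\alpha=\tfrac12 H\,\d a .\]
To prove it, parametrize the boundary as $s\mapsto(x(s),n(s))$ with $x$ an immersion of $S$ and $n$ its unit normal. The tangent vectors are then $u=(\D_X x,\D_X n)$ and $v=(\D_Y x,\D_Y n)$. The Weingarten relation gives $\D_X n=\D_{B(X)}x$ for the shape operator $B$, up to the sign convention defining $\II$. Plugging this into the definition of $\alpha$:
\[\alpha(u,v)=\tfrac14\bigl(\det(x,n,\D_{B X}x,\D_Y x)+\det(x,n,\D_X x,\D_{BY}x)\bigr).\]
The map $(a,b)\mapsto\det(x,n,\D_a x,\D_b x)$ is, up to sign, the area form $\d a$ of $S$ evaluated on $(a,b)$. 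Indeed, $(x,n,\cdot,\cdot)$ completes to an oriented basis of $W$, and the induced volume on $\T_xS=\{x,n\}^\perp$ is the area form because $\det$ restricted to $x^\perp$ with $x$ unit timelike is $\vol_{\AdS}$.

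With this identification the expression becomes
\[\tfrac14\bigl(\d a(BX,Y)+\d a(X,BY)\bigr)=\tfrac14\tr(B)\,\d a(X,Y),\]
using the two-dimensional identity $\d a(BX,Y)+\d a(X,BY)=\tr(B)\,\d a(X,Y)$, valid for any endomorphism $B$. We normalize the mean curvature as $H=\tr(B)/2$. Two signs then remain to be matched with the statement: the $\tfrac14$ against the $\tfrac12$, and the orientation of $\partial M$.

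\emph{Matching the normalization.} Depending on the paper's convention, $H=\tr(B)$ or $H=\tfrac12\tr B$. The sign of $\det(x,n,\cdot,\cdot)$ against the orientation of $\partial M$ is fixed by requiring the normal $n$ to point outward (or inward) consistently with $\partial M=N_0\sqcup\overline N_1$. Integrating the pointwise identity over $\partial M$ then gives $-\tfrac12\int_S H\,\d a$ once these factors are fixed.

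\emph{Expected obstacle.} The main obstacle is this bookkeeping of constants and signs: the factor $\tfrac14$ in $\alpha$, the orientation of $(x,n,e_1,e_2)$ in $W$ with $\q(x)=-1$ and $\q(n)=1$, the sign in the Weingarten equation in signature $(2,1)$, and the precise definition of $H$. I would settle all of these by a single check on a model case, an equidistant (totally umbilic) surface $B=\lambda\,\Id$, comparing both sides directly to fix the constant. The structural content is just the pointwise identity above.
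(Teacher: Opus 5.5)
Your proposal is correct and follows essentially the same route as the paper. The paper also writes the tangent vectors of the holonomic lift as $(u,B(u))$, identifies $\det(x,n,\cdot,\cdot)$ with the area form, and obtains $F^*\alpha=\frac14\tr(B)\,\d a$ via the same two-dimensional trace identity; the volume term is immediate from $\omega=\pi^*\vol$. The sign and orientation bookkeeping you flag is left implicit in the paper as well, so the conventions you already fixed, $\D_X n=B(X)$ and $H=\frac12\tr B$, give exactly the stated formula.
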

\begin{proof} 
If $f$ is an immersion  of type $(1,1)$ from a surface $S$ into $\mathbf{H}^{2,1}_+$, the tangent vectors to the holonomic lift $F$ of $S$ have the form $(u,B(u))$ where $u$ is tangent to $f(S)$ and $B$ is the shape operator. Given a pair of  vectors $(u,v)$ of $S$, we get
\[F^*\alpha(u,v) = \frac{1}{4}(\d a(B(u),v)+\d a(u,B(v))) = \frac{1}{4}\tr(B)\ \d a(u,v)~.\]
The result follows.
\end{proof}

\subsection{Variational formula}

We now prove the variational formula for the $\cW$-volume:

\begin{theorem}[\sc Variational formula]\label{theo:VariationalFormula}
Let $(M,\phi_t,\rho)_{t\in\mathbb R}$ be a smooth family of cobordisms, pairwise equal outside a compact set. Let $g_t$ be the induced first fundamental form at infinity on $\partial M$ and $u$ the compactly supported function such that $\left.\frac{\d}{\d t}\right\vert_{t=0}g_t=2ug_0$. 
Then
	\[\left.\frac{\d}{\d t}\right\vert_{t=0} \cW(M,\phi_t,\rho) =-\frac{1}{2} \int_{\partial M} u F_g~.\]
\end{theorem}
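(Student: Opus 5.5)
The plan is to differentiate $\cW(M,\phi_t)=\int_M\phi_t^*\omega-\int_{\partial M}\phi_t^*\alpha$ by the standard homotopy (Cartan) formula. Let $\Phi:M\times(-\epsilon,\epsilon)\to\mathsf U\AdS_+$ be $\Phi(m,t)=\phi_t(m)$, and let $\xi=\partial_t\Phi$ at $t=0$. Then $\xi$ is a compactly supported, $\rho$-invariant variation field, so all integrals below make sense on $M$. Since $\omega$ is closed,
\[
\left.\tfrac{\d}{\d t}\right\vert_{t=0}\int_M\phi_t^*\omega=\int_M \d(\phi^*i_\xi\omega)=\int_{\partial M}\phi^*(i_\xi\omega)~,
\]
and
\[
\left.\tfrac{\d}{\d t}\right\vert_{t=0}\int_{\partial M}\phi_t^*\alpha=\int_{\partial M}\phi^*\left(i_\xi\d\alpha\right)~,
\]
because the exact term $\d(i_\xi\alpha)$ integrates to zero: $\partial M$ has no boundary and the integrand has compact support. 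The variation is therefore the boundary integral
\[
\int_{\partial M}\phi^*\big(i_\xi(\omega-\d\alpha)\big)~.
\]
This reduces the theorem to a pointwise computation on a holonomic surface, with a variation field that can be taken tangent to the space of holonomic surfaces.

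The key step is to compute $\d\alpha$ and $\omega-\d\alpha$ on $\mathsf U\AdS_+$, and then restrict $i_\xi(\omega-\d\alpha)$ to a holonomic surface. I would do this in the isotropic coordinates $(\sigma,\eta)$ of Proposition~\ref{pro:holono-isotr} rather than $(x,n)$. By Theorem~\ref{theo:isotr-metr}, a holonomic surface is determined by $\sigma$, which is a section of $L^+$. Its first fundamental form at infinity is $g_t=\I^*$, and a variation with $\dot g=2ug$ corresponds to $\dot\sigma=u\sigma$ plus a tangential reparametrisation term. The tangential part contributes nothing: it changes $\phi$ by a diffeomorphism of $\partial M$, and since $\partial M$ is a holonomic surface, $\phi^*(i_\xi\beta)$ for tangent $\xi$ comes from the restriction of a $3$-form to a $2$-dimensional image, which vanishes. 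So I may take $\dot\sigma=u\sigma$, with the induced $\dot\eta$ fixed by differentiating the duality relations $\braket{\eta,\sigma}=1$, $\braket{\eta,\eta}=0$ and $\braket{\eta,\D\sigma}=0$. I expect $\dot\eta=-u\eta+(\text{a term in }\D\sigma\text{ involving }\nabla u)$.

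Next I would write $x=\frac{\sqrt2}{2}(\sigma-\eta)$ and $n=\frac{\sqrt2}{2}(\sigma+\eta)$ and substitute into the determinant formulas for $\omega$ and $\alpha$. Here I would use the frame $(\sigma,\eta,\D_{v_1}\sigma,\D_{v_2}\sigma)$, with $(v_1,v_2)$ a split basis for $g$ satisfying $g(v_1,v_2)=1$, in which the determinant is explicit. In that frame $\phi^*i_\xi(\omega-\d\alpha)$ should become a $2$-form of the shape $u\,(c_1\II^*\text{-terms})+(\text{terms in }\d u\circ\B^*\text{ or }\d(\d u\circ\I))$. The $\II^*$-dependent parts, which involve the trace of $\B^*$, should cancel between $\omega$ and $\d\alpha$; this cancellation is precisely why $\alpha$ was chosen with the coefficient $\frac14$.

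The remaining terms should assemble, after one integration by parts against the compactly supported $u$, into $-\frac12 uK(g)\omega_g=-\frac12uF_g$. For this I would use the Gauss-type identity that holds at infinity: the curvature of $\I^*$ expressed through $\II^*$ (a trace/determinant relation), together with Proposition~\ref{pro:ConformalChange} to rewrite $\d(\d u\circ\I)$ terms. The main obstacle is this bookkeeping. One has to compute $\d\alpha$ correctly on the constrained manifold $\mathsf U\AdS_+$, which is awkward because $\alpha$ is given by extrinsic determinants, and one has to identify the Gauss equation at infinity that converts the quadratic expression in $\B^*$ into $K(g)$. As a sanity check, I would verify the constant $-\frac12$ on a typical holonomic surface via Proposition~\ref{pro:class-formula}: for a normal variation of a $(1,1)$ surface the variation of the volume term is $\int(\text{normal speed})\,\d a$, and the variation of $-\frac12\int H\,\d a$ can be computed classically.
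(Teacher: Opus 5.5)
Your reduction matches the paper's. The homotopy formula leaves the pointwise integrand $\phi^*\iota_\zeta(\omega-\d\alpha)$ on the holonomic boundary, with $\zeta=\bigl(un+\tfrac{\sqrt2}{2}\D_{\nabla^g u}\sigma,\ ux-\tfrac{\sqrt2}{2}\D_{\nabla^g u}\sigma\bigr)$ coming from Lemma~\ref{lem:var-se}. The gap is in the step you defer as bookkeeping: what you predict for it is the reverse of what happens.

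First, there are no $\d u$-terms and no integration by parts. The vector $\D_{\nabla^g u}\sigma$ lies in the plane $\operatorname{Im}\D\sigma$, which also contains $\D_Ux$ and $\D_Vx$. So it drops out of $\omega(\zeta,U,V)=\det(x,\zeta_1,\D_Ux,\D_Vx)$. It is also annihilated by $x^*$ and $n^*$, so it drops out of $\iota_\zeta\d\alpha$ once you know $\d\alpha=\frac12(n^*\wedge\theta_2-x^*\wedge\theta_1)$.

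Second, the $\II^*$-part does not cancel; it is the only survivor. Write $\D_X\eta=-\D_{B^*X}\sigma$. Then $\D_Ux=\frac{\sqrt2}{2}\D_{(1+B^*)U}\sigma$ and $\D_Un=\frac{\sqrt2}{2}\D_{(1-B^*)U}\sigma$. Let $\mu(U,V)=\det(x,n,\D_U\sigma,\D_V\sigma)$, which equals $\pm\omega_g$. Along the surface, $\theta_1=\frac12\det(1+B^*)\,\mu$ and $\theta_2=\frac12\det(1-B^*)\,\mu$, so
\[
\iota_\zeta\omega=u\,\theta_1=\tfrac u2\bigl(1+\tr B^*+\det B^*\bigr)\mu\ ,\qquad
\iota_\zeta\d\alpha=\tfrac u2(\theta_1+\theta_2)=\tfrac u2\bigl(1+\det B^*\bigr)\mu\ .
\]
The coefficient $\frac14$ in $\alpha$ is there to kill the $\I^*$/$\III^*$ contribution $1+\det B^*$. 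This leaves $\iota_\zeta(\omega-\d\alpha)=\frac u2\tr(B^*)\,\mu$. Had the trace terms cancelled as you expect, nothing would remain for your Gauss-type identity to act on, and the variation would vanish.

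What is then needed is that identity, used pointwise rather than after an integration by parts. The surface $\sigma(S)\subset W$ has normal plane $\operatorname{span}(\sigma,\eta)$ and vector-valued second fundamental form $\II^*(X,Y)\,\sigma-\I^*(X,Y)\,\eta$. The Gauss equation in the flat space $W$, evaluated on a null basis, then gives $K(g)=-\tr B^*$. As a check: for the surface at distance $r$ from a totally geodesic timelike plane, $B^*=e^{-2r}\Id$ and $K(\I^*)=-2e^{-2r}$. Hence the integrand is directly $-\frac12uK(g)\mu=\mp\frac12uF_g$.

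The paper avoids both the frame computation and the Gauss equation. It proves $\d\beta=\theta_1-\theta_2$, where $\beta$ is the connection form of the rank-two bundle $\cQ=\{(w,w)\}$ with the connection induced from $W\oplus W$. It then observes, via Proposition~\ref{pro:holoI}, that along a holonomic surface $\cQ$ is isometric to $(\T S,\I^*)$, so $\d\beta$ restricts to the curvature form of $g$.
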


\subsubsection{More differential forms}
We need to introduce more differential forms and objects to understand the variations of $\omega$ and $\alpha$.
\begin{enumerate}
	\item The $1$-forms $x^*$ and $n^*$ given by
	\[x^*(u)=\langle x,u_2\rangle~, \]
	\[n^*(u)=\langle u_1,n\rangle~.\]
	Recall that $x^*+n^*=0$ and that the kernel of any of these forms is the contact distribution  $\cP$ in $\U\AdS$. As a vector subspace of $W\times W$,
	\[\cP_{(x,n)}=\{(u,v)\mid \braket{x,u}=\braket{x,v}=\braket{n,u}=\braket{n,v}=0\}\]
	\item The $2$-forms $\theta_1,\theta_2$ and $\alpha$ whose value at $(x,n)$ is given by
	\begin{align*}
	\theta_1(u,v) & = \det(x,n,u_1,v_1)~,\\
	\theta_2(u,v) & = \det(x,n,u_2,v_2)~.
	\end{align*}
	\item  Let $\cF(\cQ)$  be the space of pairwise orthogonal triples $(x,u,n)$  satisfying
	\[-\braket{x,x}=\braket{n,n}=\braket{u,u}=1\ .\]  
Let $\beta$ be the  1-form  on $\cF(\cQ)$  defined by 
	\begin{equation}\label{eq:beta}
\beta_{(x,n,u)}(w)=-\det(x,n,u,w_3)\ , 
\end{equation}	\end{enumerate}
\vskip 0.2 truecm
\noindent{\sc Remarks}
\begin{enumerate}
	\item All these differential forms are again invariant under the involution $\iota$ and therefore descend to forms on $\mathsf{U}\AdS $ that we denote the same way.
	\item We have a projection $\pi$ from $\cF(\cQ)$ to $\U\AdS$ defined by $\pi(x,n,u)=\pi(x,n)$.
\end{enumerate}
\bigskip
We first need to identify precisely $\beta$. Let $\cQ$ be the subdistribution of rank 2 of
	$\cP$ defined by 
	\[\mathcal  Q\defeq\{(u,v)\in \cP\mid u=v\}\ .\] We observe that we have an identification of $\cQ_{x,n}$ with the orthogonal of $(x,n)$ in $W$. Then $\cF(\cQ)$ is the positive unit tangent bundle of $\cQ$. 

\begin{proposition} 
 The form  $\d\beta$ is the curvature form of the bundle $\cQ$, equipped with the  $(1,1)$-metric and the connection from its embedding as a subbundle of the trivial bundle $W\oplus W$ over $\U\AdS$.
 \end{proposition}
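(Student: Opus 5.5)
The plan is to compute directly with a local section of the frame bundle $\cF(\cQ)\to\U\AdS$, namely a local unit vector field $e$ of $\cQ$ (so at $(x,n)$, $e$ is a unit spacelike vector in $\{x,n\}^\perp\subset W$). Let $f$ be the vector completing $(e,f)$ to an oriented basis of $\{x,n\}^\perp$ with $\braket{f,f}=-1$ and $\det(x,n,e,f)=1$, after choosing the sign conventions so that the $(1,1)$-plane $\{x,n\}^\perp$ has the right orientation.

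The connection on $\cQ$ induced from $W\oplus W$ is orthogonal projection of the trivial derivative. For the identification of $\cQ$ with $\{x,n\}^\perp$ (the diagonal $(w,w)$), this is projection of $\D w$ onto $\{x,n\}^\perp$. Since $\SO(1,1)$ is abelian, the connection is given by a single $1$-form $A=\braket{\D e,f}$ up to sign. Its curvature is the $2$-form $\d A$, since the $\mathfrak{so}(1,1)$-valued curvature is $\d A$ times the generator.

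The first step is to check that $\beta=\pi^*A$ up to a sign, or more precisely that $\beta$ is the global connection form on $\cF(\cQ)$. Given $(x,n,u)$ and a tangent vector $w$, the component $w_3=\D u$ decomposes along $x,n,u$ and the fourth direction $v$ orthogonal to $x,n,u$. We have $\det(x,n,u,w_3)=\det(x,n,u,v)\braket{w_3,v}/\braket{v,v}$, and this is $\pm\braket{\D u,v}$, which is exactly the connection form evaluated on the frame $(u,v)$. Along a fiber, rotating $u$ by a boost of parameter $t$ gives $\beta=\d t$, confirming that $\beta$ is a connection form.

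The second step is to conclude that $\d\beta=\pi^*(\text{curvature form})$, because for an abelian structure group the exterior derivative of the connection form on the principal bundle is the pullback of the curvature $2$-form. By Gauss's formula one could also express this curvature as $\Omega(X,Y)=\braket{\Pi(X)e,\Pi(Y)f}-\braket{\Pi(Y)e,\Pi(X)f}$, where $\Pi$ is the second fundamental form of the subbundle. That expression is only needed later.

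The main obstacle is sign and orientation bookkeeping. One has to match the orientation of $\{x,n\}^\perp$ that is implicit in $\det(x,n,\cdot,\cdot)$ with the convention $R=-F\otimes\I$ fixed earlier for Lorentz curvature forms, and check that $\braket{u,u}=1$ and $\braket{v,v}=-1$ produce the claimed sign. I would fix an explicit adapted basis of $W$ and verify the fiber computation $\beta(\partial_t)=1$ there.
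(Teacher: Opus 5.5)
Your proposal is correct and follows essentially the paper's argument. The paper likewise decomposes the third component $w_3=\D u$ along the unit timelike vector $e$ orthogonal to $(x,n,u)$ to identify $\beta$ with the connection form $\langle \D u, e\rangle$ of the connection induced on $\cQ\cong\{x,n\}^\perp$, and from that it reads off $\d\beta$ as the curvature. Your fiber check and your remark that $\d\beta$ descends because $\SO(1,1)$ is abelian make explicit what the paper leaves implicit.
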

 
\begin{proof} By definition, if $(x^t,n^t,u^t_1)$ is a curve in $\cF(\cQ)$ seen as a subset of $W^3$
$$
\beta_{(x^0,n^0,u^0)}(\dot x,\dot n,\dot u)=\braket{\dot u ,e}\ ,
$$
where $e$ is the vector satisfying $\braket{e,e}=-1$, orthogonal to $(x,n,u)$ and such that $\det(x,n,e,u)=1$. Thus 
$$
\beta_{(x^0,n^0,u^0_1)}(\dot x,\dot n,\dot u)=-\det(x,n,u,\dot u) .
$$
This is what we wanted to prove.
\end{proof}

\begin{proposition}[\sc Fundamental equations]\label{pro:FundamentalEquations}
	We have 
	\begin{align}
		 \d\beta & =\theta_1-\theta_2\ ,\\
	 \d\alpha & = \frac{1}{2}(n^*\wedge\theta_2-x^*\wedge \theta_1)\ ,
\end{align}
\end{proposition}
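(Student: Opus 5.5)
Both identities are pointwise computations with constant multilinear forms on $W$. The plan is to rewrite each form as a determinant evaluated on the $W$-valued $1$-forms $\d x,\d n$ (and $\d u$ on $\cF(\cQ)$), differentiate using that $\det$ is a constant alternating $4$-form on $W$, and then simplify with the splitting
$$
W=\mathrm{span}(x,n)\oplus P,\qquad P=\{x,n\}^{\perp}.
$$
Throughout, $x,n$ denote the coordinate functions $(x,n)\mapsto x$, $(x,n)\mapsto n$ restricted to $\mathsf U\AdS_+\subset W\times W$. Hence $\d x(u)=u_1$ and $\d n(u)=u_2$.

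\emph{The formula for $\d\alpha$.} First I rewrite $\alpha$. Since $\det(x,n,u_2,v_1)=-\det(x,n,v_1,u_2)$, we get
$$
\alpha=\tfrac14\det(x,n,\d x\wedge \d n),
$$
where $\det(x,n,\d x\wedge\d n)(u,v)=\det(x,n,u_1,v_2)-\det(x,n,v_1,u_2)$. Taking the exterior derivative, the term $\d(\d x\wedge \d n)$ vanishes. We are left with
$$
\d\alpha=\tfrac14\bigl(\det(\d x,n,\d x\wedge\d n)+\det(x,\d n,\d x\wedge \d n)\bigr),
$$
with the appropriate graded signs, which I will fix by evaluating on a triple $(u,v,w)$.

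Next I decompose the tangent components using the constraints in \eqref{eq:SplittingUnitTangent2}. The vector $u_1$ has no $x$-component, its $n$-component is $\langle u_1,n\rangle n=n^*(u)\,n$, and the rest lies in $P$. The vector $u_2$ has no $n$-component, its $x$-component is $-\langle u_2,x\rangle x=-x^*(u)\,x$, and the rest lies in $P$.

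In $\det(\d x,n,\d x,\d n)$ the only way to fill the $x$-direction is through the $x$-component of $\d n$. This produces $-x^*$ wedged with $\det(x,n,\d x_P\wedge\d x_P)$, and the latter is a multiple of $\theta_1$, because the $n$-components of $\d x$ die against the slot containing $n$. Symmetrically, $\det(x,\d n,\d x,\d n)$ only sees the $n$-component of $\d x$. This produces $n^*\wedge\theta_2$. The factor $2$ arising from the antisymmetrisation of $\d x_P\wedge\d x_P$, combined with the prefactor $\tfrac14$, gives the coefficient $\tfrac12$. Since $x^*=-n^*$, the result can equally be written $\tfrac12 n^*\wedge(\theta_1+\theta_2)$, which gives a useful consistency check.

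\emph{The formula for $\d\beta$.} By the preceding proposition, $\beta=-\det(x,n,u,\d u)$ on $\cF(\cQ)$, where $u$ is now the third coordinate function. Differentiating gives
$$
\d\beta=-\det(\d x,n,u,\d u)-\det(x,\d n,u,\d u)-\det(x,n,\d u\wedge \d u),
$$
up to the usual signs. I treat the three terms as follows.
\begin{enumerate}
\item Differentiating $\langle u,u\rangle=1$ shows that $\d u$ has no $u$-component. Its $P$-part is therefore along $e$ alone, so $\det(x,n,\d u\wedge\d u)=0$ because $e\wedge e=0$.
\item Differentiating $\langle u,x\rangle=\langle u,n\rangle=0$ gives $\langle \d u,x\rangle=-\langle u,\d x\rangle$ and $\langle \d u,n\rangle=-\langle u,\d n\rangle$. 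Hence the $x$- and $n$-components of $\d u$ are expressed through $\d x$ and $\d n$.
\item Substituting into the first two terms, every surviving term becomes a determinant $\det(x,n,\cdot,\cdot)$ with two entries from $\d x$, or two entries from $\d n$. These are the pull-backs $\pi^*\theta_1$ and $\pi^*\theta_2$, coming with opposite signs because one term carries $-\langle u,\d x\rangle$ and the other $-\langle u,\d n\rangle$, together with the sign conventions $\langle x,x\rangle=-1$ and $\langle n,n\rangle=1$.
\end{enumerate}
Here $\d u_P$ must be re-expanded in the basis $(u,e)$ of $P$, and $\langle x,x\rangle=-1$ introduces a sign that is exactly what separates $\theta_1$ from $\theta_2$.

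I expect the main obstacle to be the sign and normalisation bookkeeping. This includes the graded Leibniz signs, the opposite norms of $x$ and $n$, the orientation convention $\det(x,n,e,u)=1$, and the identification of $\cQ$ with $\{x,n\}^\perp$ via $u=v$. A further subtle point is that $\d\beta=\theta_1-\theta_2$ is an identity of forms on $\cF(\cQ)$ after pulling back by $\pi$. I would settle all signs by evaluating both sides on an explicit frame, for instance at a point with $x,n,u,e$ an orthonormal basis, using tangent vectors of pure type (only $u_1$, only $u_2$, or a rotation in the $(u,e)$-plane).
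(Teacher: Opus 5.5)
Your route is the same as the paper's: differentiate the determinant expressions, use the differentiated constraints to kill or rewrite the $x$-, $n$- and $u$-components, and recognise $\theta_1,\theta_2$. For $\d\alpha$ this goes through, and your form $\d\alpha=\tfrac12\, n^*\wedge(\theta_1+\theta_2)$ is exactly what the computation gives.

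The gap is the overall sign you postpone for $\d\beta$: it does not come out as stated. Take $\beta_{(x,n,u)}(w)=-\det(x,n,u,w_3)$, $\d\beta(w,v)=\D_w\beta(v)-\D_v\beta(w)$, and normalise $\det(x,n,u,e)=1$. Your own bookkeeping then gives
\[
\D_w\beta(v)=-\braket{w_1,e}\braket{v_1,u}+\braket{w_2,e}\braket{v_2,u},\qquad \theta_i(w,v)=\braket{w_i,e}\braket{v_i,u}-\braket{w_i,u}\braket{v_i,e},
\]
hence $\d\beta=\theta_2-\theta_1$. Concretely, choose a basis $(f_1,\dots,f_4)$ of $W$ with $\q=-y_1^2+y_2^2+y_3^2-y_4^2$ and $\det(f_1,f_2,f_3,f_4)=1$. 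Consider the family in $\cF(\cQ)$ given by $x=\cos s\,(\cosh t\,f_1+\sinh t\,f_3)+\sin s\,f_4$, $n=f_2$, $u=\sinh t\,f_1+\cosh t\,f_3$. Then $\beta(\partial_s)=0$ and $\beta(\partial_t)=\sin s$, so $\d\beta(\partial_t,\partial_s)=-1$ at $t=s=0$. On the other hand, $\theta_1(\partial_t,\partial_s)=\det(f_1,f_2,f_3,f_4)=1$ and $\theta_2(\partial_t,\partial_s)=0$.

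So the explicit-frame check you propose would refute the first identity as written, not confirm it, and the argument cannot be closed without changing a sign.

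The paper's proof reaches $\theta_1-\theta_2$ only through a dropped sign. Its evaluations $\det(x,w_2,u,v_3)=\braket{w_2,e}\braket{v_3,n}$ and $\det(w_1,n,u,v_3)=-\braket{w_1,e}\braket{v_3,x}$ are correct. Inserted into $\D_w\beta(v)=-\det(w_1,n,u,v_3)-\det(x,w_2,u,v_3)-\det(x,n,w_3,v_3)$, however, they give $\braket{w_1,e}\braket{v_3,x}-\braket{w_2,e}\braket{v_3,n}$, which is the negative of what is written there.

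The discrepancy starts in the preceding proposition. With its convention $\det(x,n,e,u)=1$ one has $\braket{\dot u,e}=+\det(x,n,u,\dot u)$, so the form $\braket{\dot u,e}$ is $-\beta$, and for that form $\d(-\beta)=\theta_1-\theta_2$ does hold. You should therefore either redefine $\beta$ as $\det(x,n,u,w_3)$ or prove $\d\beta=\theta_2-\theta_1$. In the latter case, carry the sign into the identity $\iota_\zeta(\omega-\d\alpha)=\tfrac12 u(\theta_1-\theta_2)$ used in the variational formula.
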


\begin{proof} We first compute $\d\beta$. Recall that
	$$
	\d\beta(w,v)=\D_w\beta(v)-\D_v\beta(w).
	$$ 
From equation \eqref{eq:beta}, we have 
	\begin{align*}
		\D_w\beta(v)&\defeq - \det(w_1,n,u,v_3)- \det(x,w_2,u,v_3) - \det(x,n,w_3,v_3)\ .
	\end{align*} 
	Recall that $\cF(\cQ)$ is the subset of $W^3$ consisting of triples $(x,n,u)$ which are pairwise orthogonal and satisfy
	$$
	-\braket{x,x}=\braket{n,n}=\braket{u,u}=1\ .
	$$
Differentiating these equations, and introducing the vector $e$ orthogonal to $x$, $u$ and $n$, such that $\det(x,n,u,e)=1$, recall that $\braket{n,n}=\braket{u,u}=1$ while   $\braket{x,x}=\braket{e,e}=-1$
 \begin{itemize}
 \item $\braket{u,w_3}=0=\braket{v_3,u}$ and thus \begin{align}\det(x,n,w_3,v_3)=0\ \label{eq:dbeta2}.\end{align}
 \item $\braket{w_2,n}=0$ and thus  \begin{align}
\det(x,w_2,u,v_3)&=\braket{v_3,n}\det(x,w_2,u,n)\cr&=-\braket{w_2,e}\braket{v_3,n}\det(x,e,u,n)\cr&=\langle w_2,e\rangle\langle v_3,n\rangle\ \label{eq:dbeta3}.\end{align}
 \item   $\braket{w_1,x}=0$ and thus  \begin{align}
\det(w_1,n,u,v_3)&=-\braket{v_3,x}\det(w_1,n,u,x)\cr&=\braket{w_1,e}\braket{v_3,x}\det(e,n,u,x)\cr&=-\langle w_1,e\rangle\langle v_3,x\rangle\ \label{eq:dbeta4}.\end{align} \end{itemize}
It follows that 
 \begin{align*}
		\D_w\beta(v)		&=\braket{w_2,e}\braket{v_3,n}-\braket{w_1,e}\braket{v_3,x}\ .
\end{align*} 
 Finally, differentiating $\braket{u,x}=\braket{u,n}=0$, we get
 $$
 \braket{v_3,x}+\braket{u,v_1}=0\ , \  \braket{v_3,n}+\braket{u,v_2}=0\ ,
 $$
hence
\begin{align}
\D_w\beta(v) & =\braket{w_1,e}\braket{v_1,u}-\braket{w_2,e}\braket{v_2,u} \\ \D_v\beta(w) &=\braket{v_1,e}\braket{w_1,u}
-\braket{v_2,e}\braket{w_2,u}.
\label{eq:dbet5}	
\end{align}
Since
\begin{align}
\det(x,n,w_2,v_2)&=\langle w_2,e\rangle\langle v_2,u\rangle-\langle w_2,u\rangle\langle v_2,e\rangle~,\\	
\det(x,n,w_1,v_1)&=\langle w_1,e\rangle\langle v_1,u\rangle-\langle w_1,u\rangle\langle v_1,e\rangle~,	
\end{align}
it follows that
\begin{align*}
\d\beta(w,v)&=\D_w\beta(v)-\D_v\beta(w) = \det(x,n,w_1,v_1)-\det(x,n,w_2,v_2)\ \\	
&=\theta_1(w,v)-\theta_2(w,v)\ .
\end{align*}
Let us now compute the second identity. Recall that 
$$
\alpha(u,v)=\frac{1}{4}\left( \det(x,n,u_2,v_1) +\det(x,n,u_1,v_2) \right)~.
$$
We have 
$$
\d\alpha(u,v,w)=(\D_u \alpha)(v,w) +(\D_v\alpha)(w,u)+(\D_w\alpha)(u,v)\ .
$$
Then
\begin{align*}
	4(\D_u \alpha)(v,w)&=\det(u_1,n,v_2,w_1)+\det(x,u_2,v_2,w_1)\\&+\det(u_1,n,w_2,v_1)+\det(x,u_2,w_2,v_1)\ .
\end{align*}

Now we use the fact that $u_1$, $v_1$ and $w_1$ are all normal to $x$, while $u_2$, $v_2$ and $w_2$ are  normal to $n$.
Thus 
\begin{align*}
4(\D_u \alpha)(v,w)&=-\braket{v_2,x}\det(u_1,n,x,w_1)+\braket{w_1,n}\det(x,u_2,v_2,n)\\
&- \braket{w_2,x}\det(u_1,n,x,v_1)+\braket{v_1,n}\det(x,u_2,w_2,n)\\
		&=-\braket{v_2,x}\theta_1(w,u)+\braket{w_1,n}\theta_2(u,v)-\braket{w_2,x}\theta_1(u,v)+\braket{v_1,n}\theta_2(u,w)\ .
	\end{align*}
	Thus 	$4\d\alpha=2n^*\wedge\theta_2-2x^*\wedge \theta_1$. 
\end{proof}

The following  proposition identifies $\beta$ as the connection form of a holonomic surface.
Let $S$ be a holonomic surface. We have a linear map from $\T S_{(x,n)}S$ to $\cQ_{(x,n)}$, both seen as subspaces of $W\oplus W$, given by 
	\[\Lambda:(u,v)\mapsto \tfrac{1}{2}(u+v,u+v)\ ,\]
	\begin{proposition}\label{pro:holoI}
		The linear map $\Lambda$ is an isometry from $\T_{x,n} S$ equipped with its first fundamental form at infinity to $\cQ_{(x,n)}$ equipped with the induced metric from $W\oplus W$.
	\end{proposition}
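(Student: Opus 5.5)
Three things have to be checked: that $\Lambda$ actually lands in $\cQ_{(x,n)}$, that it pulls back the metric of $\cQ$ to the first fundamental form at infinity $\I^*$, and that it is injective, so that it is a linear isometry onto $\cQ_{(x,n)}$.

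\textbf{Step 1: $\Lambda$ takes values in $\cQ$.} Let $(u,v)$ be tangent to $S$ at $(x,n)$. By \eqref{eq:SplittingUnitTangent2} we have $\braket{u,x}=\braket{v,n}=0$ and $\braket{u,n}+\braket{x,v}=0$. Since $S$ is holonomic, $(u,v)$ also lies in the contact distribution, so $\braket{n,u}=\braket{x,v}=0$. Hence $u$ and $v$ are both orthogonal to $x$ and to $n$, and so is $w\defeq\tfrac12(u+v)$. Therefore $(w,w)$ lies in $\cP_{(x,n)}$ and has equal components, i.e. $(w,w)\in\cQ_{(x,n)}$.

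\textbf{Step 2: the metric.} The metric on $\cQ_{(x,n)}$ comes from its embedding in $W\oplus W$. Under the identification of $\cQ_{(x,n)}$ with $\{x,n\}^\perp\subset W$ recalled above, this is the restriction of $\bq$ to $\{x,n\}^\perp$, up to the normalization implicit in that identification. So $\Lambda(u,v)$ has squared norm $\tfrac14\braket{u+v,u+v}$ (or twice that if one adds the norms of both components).

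By Proposition \ref{pro:holono-isotr}, $\sigma=\tfrac{\sqrt2}{2}(x+n)$, hence $\D\sigma(u,v)=\tfrac{\sqrt2}{2}(u+v)$. It follows that
\[\I^*((u,v),(u,v))=\tfrac12\braket{u+v,u+v},\]
which is also the content of the proposition identifying $\I^*$ with the metric induced by $Q$.

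The two quantities must match. With the convention that the norm of $(w,w)$ in $W\oplus W$ is $\braket{w,w}+\braket{w,w}=2\braket{w,w}$, we get $2\cdot\tfrac14\braket{u+v,u+v}=\tfrac12\braket{u+v,u+v}$, which is exactly $\I^*$. This normalization bookkeeping is the main point to get right. I would fix the convention as the sum of the two component metrics on $W\oplus W$, which is the natural one for the induced metric on the subbundle, and polarize to obtain equality of the bilinear forms.

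\textbf{Step 3: bijectivity.} Both $\T_{(x,n)}S$ and $\cQ_{(x,n)}$ are $2$-dimensional: the latter is the $\bq$-orthogonal of the span of $x$ and $n$ in $W$. So it suffices to show $\Lambda$ is injective. If $u+v=0$, then $\D\sigma(u,v)=0$. Since $\sigma$ is an immersion, being an isotropic surface by Proposition \ref{pro:holono-isotr}, this gives $(u,v)=0$. Alternatively, Step 2 shows $\Lambda$ pulls back a nondegenerate form, since $\I^*$ is Lorentzian by Theorem \ref{theo:isotr-metr}, and this forces injectivity. Hence $\Lambda$ is a linear isometry onto $\cQ_{(x,n)}$.
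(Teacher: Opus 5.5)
Your proposal is correct and is essentially the paper's proof. The core of both is the computation $\braket{\Lambda(U),\Lambda(U)}=\tfrac12\braket{\D_ux+\D_un,\D_ux+\D_un}=\I^*(U,U)$ via $\sigma=\tfrac{\sqrt2}{2}(x+n)$, using the sum-of-components metric on $W\oplus W$ that the paper uses implicitly. Your Steps 1 and 3 (that $\Lambda$ lands in $\cQ_{(x,n)}$ and is bijective) are checks the paper leaves unstated; note that bijectivity needs $\I^*$ to be nondegenerate, i.e.\ $\sigma$ to be an immersion, which holds for the Epstein surfaces where the proposition is applied.
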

	\begin{proof}
		Let $U=({\rm D}_ux,{\rm D}_u n)$ be a tangent vector to $S$. Then
		\[
		\braket{\Lambda(U),\Lambda(U)}=\tfrac{1}{2}\braket{{\rm D}_ux+{\rm D}_u n,{\rm D}_ux+{\rm D}_u n}=\I^*(U,U)\ .\]
		This completes the proof.
	\end{proof}

\subsubsection{Variation of surfaces}

To show Theorem~\ref{theo:VariationalFormula}, we will use the following lemma. 
Let $\sigma_t$ be the isotropic surface associated with $g_t\eqdef e^{2u_t}g$ and $\eta_t$ its dual surface. Write $(\sigma,\eta)=(\sigma_0,\eta_0)$ and denote by $\dot\sigma$ and $\dot\eta$ the derivative at $t=0$.

\begin{lemma}\label{lem:var-se}
Using the above notations, we have
\[\dot\sigma= u\sigma~\text{ and }~\dot\eta = -u\eta - \nabla^gu~,\]
where $\nabla^gu$ is the $g$-gradient of $u$, that is the vector field on $S$ satisfying $g(\nabla^gu,\cdotp)=\d u$.
\end{lemma}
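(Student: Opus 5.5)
First, the formula $\dot\sigma=u\sigma$. By the lemma proved after Theorem~\ref{theo:isotr-metr}, sections of $L^+$ correspond bijectively to metrics in the conformal class, and $g_{e^f\sigma}=e^{2f}g_\sigma$ by equation~\eqref{eq:g-sigma}. Hence the isotropic surface of $g_t=e^{2u_t}g$ is $\sigma_t=e^{u_t}\sigma$, with the sign fixed by continuity in $t$. Differentiating at $t=0$, where $u_0=0$ and $\dot u_0=u$ (the normalisation $\frac{\d}{\d t}g_t=2ug$), gives $\dot\sigma=u\sigma$.

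For $\dot\eta$, I would differentiate the defining relations of the dual surface, $\braket{\eta_t,\sigma_t}=1$, $\braket{\eta_t,\eta_t}=0$ and $\braket{\eta_t,\D_X\sigma_t}=0$ for every vector field $X$ on $S$. Pointwise, $W$ splits as $V^\perp\oplus V$, where $V=\operatorname{Im}\D\sigma$ and $V^\perp=\mathrm{span}(\sigma,\eta)$ with $\sigma,\eta$ isotropic and $\braket{\sigma,\eta}=1$. So I write
\[
\dot\eta=a\sigma+b\eta+\D_Z\sigma
\]
for functions $a,b$ and a vector field $Z$, and solve for the coefficients.
\begin{itemize}
\item Differentiating $\braket{\eta,\eta}=0$ gives $\braket{\dot\eta,\eta}=0$, so $a=0$.
\item Differentiating $\braket{\eta,\sigma}=1$ gives $\braket{\dot\eta,\sigma}+\braket{\eta,u\sigma}=0$, so $b+u=0$ and $b=-u$.
\item Differentiating $\braket{\eta,\D_X\sigma}=0$, and using $\D_X\dot\sigma=\D_X(u\sigma)=(Xu)\sigma+u\D_X\sigma$ (derivatives in $t$ and along $S$ commute), gives
\[
\braket{\dot\eta,\D_X\sigma}+\braket{\eta,(Xu)\sigma+u\D_X\sigma}=0 .
\]
\end{itemize}
In the third relation, $\braket{\eta,\D_X\sigma}=0$ and $\braket{\eta,\sigma}=1$ reduce it to $\braket{\dot\eta,\D_X\sigma}=-\d u(X)$. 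The $\sigma$ and $\eta$ components of $\dot\eta$ pair to zero with $\D_X\sigma$, because $\braket{\sigma,\D_X\sigma}=\braket{\eta,\D_X\sigma}=0$. So the relation becomes $g(Z,X)=-\d u(X)$ for all $X$, i.e. $Z=-\nabla^g u$.

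The only point needing care is notation. The statement writes $\nabla^g u$ for the vector $\D_{\nabla^g u}\sigma\in W$, identifying $\T S$ with $V$ through $\D\sigma$; I would note this identification explicitly. With it, $\dot\eta=-u\eta-\nabla^g u$.
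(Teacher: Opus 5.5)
Your proposal is correct and follows essentially the same route as the paper. Like the paper, you write $\sigma_t=e^{u_t}\sigma$, differentiate the three defining relations of the dual surface, and solve for the components of $\dot\eta$ in the splitting $\mathrm{span}(\sigma,\eta)\oplus\operatorname{Im}\D\sigma$. Your explicit remark that $\nabla^g u$ stands for the vector $\D_{\nabla^g u}\sigma$ in $W$ is a useful clarification that the paper leaves implicit.
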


\begin{proof}
By construction, the sections $(\sigma_t,\eta_t)$ satisfy
\[\sigma_t  = e^{u_t}\sigma ~\ ,~\ 
\langle \eta_t,\eta_t\rangle  = 0~\ ,~\text{ and }~
\langle\sigma_t,\eta_t\rangle  = 1~.\]
Differentiating the first equation gives $\dot\sigma=u\sigma$. The second gives $\braket{\dot\eta,\eta}=0$ and the third gives $\langle \dot\eta ,\sigma\rangle = -u~$.
In particular, there is a vector field $X$ on $S$ such that
\[\dot\eta = -u\eta + \D_X \sigma~.\]

We now obtain the expression of $X$. Let $Y$ be a tangent vector on $S$. Then the last equation to be used is 
\[\langle \D_Y\sigma_t,\eta_t\rangle = 0\ .\]
We differentiate in time the last equation to get 
\[\braket{\D_Y\dot\sigma,\eta}+ \braket{\D_Y\sigma,\dot\eta} = 0\ .\]
Using the previous identifications, we obtain
\[0=\d u(Y)+u\braket{\D_Y\sigma,\eta}-u\braket{\D_Y\sigma,\sigma}+\braket{\D_Y\sigma,\D_X\sigma}=\d u(Y)+\braket{\D_Y\sigma,\D_X\sigma}\ .\]
Using the equality $\langle \D_X\sigma,\D_Y\sigma\rangle = g(X,Y)$, we finally obtain 
\[\d u(Y)+g(X,Y)=0\ ,\]
which yields $X=-\nabla^g u$ and the result.
\end{proof}

\begin{proof}[Proof of Theorem~\ref{theo:VariationalFormula}]
 Let $(\Sigma_t)_{t\in\mathbb R}$ be the family of holonomic  surfaces associated with $e^{2u_t}g$. From Proposition~\ref{pro:holono-isotr} and Lemma \ref{lem:var-se}, we see that the  variation $\zeta$ of the family of holonomic  surfaces along $\Sigma_0$ is given by
\begin{align*}
 \zeta&=(\zeta_1,\zeta_2)\\&=\frac{\sqrt 2}{2}\left(u \sigma+u\eta+\nabla^gu,u\sigma-u\eta-\nabla^gu\right)\\&=\left(nu+\frac{\sqrt{2}}{2}\nabla^gu,xu-\frac{\sqrt{2}}{2}\nabla^gu\right)~.	
\end{align*}
Given a family of compact cobordisms $\phi_t$,  we obtain
$$
\left.\frac{\d}{\d t}\right\vert_{t=0} \cW(M,\phi_t,\rho) = - \int_{S}\phi^*\iota_\zeta(\omega-\d\alpha)\ .
$$
A first computation gives for $U$ and $V$ tangent to $\Sigma_0$
\begin{align*}
	\iota_\zeta\omega(U,V)&=\det(x,\zeta_1,U_1,V_1)\\&=u\det(x,n, U_1,V_1)
	\\
	&=u\theta_1(U,V)\ ,
\end{align*}
where we used the fact that $\nabla^gu$ is tangent to $\Sigma_0$.
Similarly for $U$ and $V$ tangent to $\Sigma_0$
\begin{align*}
2\iota_\zeta\d\alpha(U,V)&=(n^*\wedge\theta_2-x^*\wedge\theta_1)(\zeta,U,V)\\
&=\braket{n,\zeta_1}\ \theta_2(U,V)-\braket{x,\zeta_2}\ \theta_1(U,V)\\
&=u\left(\theta_2(U,V)+\theta_1(U,V)\right)\ .
 	\end{align*}
 	It then follows that 
 	$$
 	\iota_\zeta(\omega-\d \alpha)=\frac{1}{2}u(\theta_1-\theta_2)=\frac{1}{2}u\ \d\beta\ ,
 	$$
 	where the last equation comes from Proposition~\ref{pro:FundamentalEquations}.
 	Now $\d\beta$ is the curvature of the bundle $\cQ$ over 
 $S$, thus from Proposition~\ref{pro:holoI}, $\d\beta$ is also the curvature of the bundle $\T S$ equipped with the first fundamental form at infinity. 
\end{proof}

\section{Liouville action} 

We define in this section the {\em Liouville action} for Lorentzian metrics on a surface equal outside a compact set, relate it to $\cW$-volumes for {\em product cobordisms} of surfaces that can be equivariantly immersed in $\Ein$, and then concentrate on compact surfaces. In the next section, we will focus on the split annulus, where we can relax the condition "equal outside a compact set" and introduce the {\em $\cS$-class}.
\subsection{Liouville action for metrics equal outside of a compact set} Let $S$ be a surface equipped with a split structure $\boldsymbol \sigma$. We define $\cM(S,\sigma)$ to be the set of metrics conformal to $\sigma$. When two metrics $g$ and $h$ belong to $\cM(S,\sigma)$ we say that they are {\em equal outside a compact set} if we can write $h=e^{2u} g$ with $u$ of compact support.

We then define 
\begin{definition}
	Let $g$ and $h$ be two metrics in $\cM(S,\sigma)$ equal outside a compact set with $h=e^{2u}g$. The Liouville action of $(g,h)$ is then defined as
	\[\cS(g,h)= \frac{1}{2}\int_S u F_g - \frac{1}{4}\int_S u\ \d (\d u\circ \I)~.\]

	\end{definition}
\begin{remark} \label{rem:liouville_physics}
     If $g = \d x \ \d y$ and $u$ are compactly supported, then 
    $$\cS (g, e^{2u} g) = -\frac{1}{2} \int_S  u \ \partial^2_{xy} u \ \d x \ \d y =  \frac{1}{2} \int_S  \partial_x u \ \partial_{y} u \ \d x \ \d y.$$
     More generally, if $u$ is compactly supported, the expression in Proposition~\ref{prop:liouville_expression} below can be rewritten as
    $$\cS (g, e^{2u} g) = \frac{1}{4} \int_S  (2 u K_g - u \dal_g u ) \ \d a_g,$$
    where $\d a_g$ is the area form of $g$.  
     This expression coincides with the Liouville action with zero cosmological constant in the physics literature; see, e.g., \cite{Teschner:2001ds}.
\end{remark}	

	We now summarize its main properties.

\begin{proposition} \label{prop:liouville_expression}
Let $g$ and $h$ be two metrics  in $\cM(S)$ with $h=e^{2u}g$ for a compactly supported smooth function $u$, then
\begin{enumerate}
\item we have the {\em variational formula}: for a family of metrics $h_t=e^{2u_t}g$
	$$
	\left.\frac{\rm d}{\rm d t}\right\vert_{t=0}\cS(g,h_t)= \frac{1}{2}\int_S\dot u\  F_{g}\ ,
	$$ 
	where $\dot u$ is the function defined by $\dot u  =\left.\frac{\rm d}{\rm d t}\right\vert_{t=0}u_t$.
	\item we have 
\begin{align}
	&\hbox{\sc [Chasles formula]}\ \ \ \ \ \cS(h_1,h_2)+\cS(h_2,h_3)=\cS(h_1,h_3)\ ,\\
	&\hbox{\sc [Monotonicity formula]}\ \ \ \ \cS(g,h)=\frac{1}{4}\int_S u(F_g+F_h)\ .
	\end{align}
	\item For any split diffeomorphism $\varphi$ of $S$, we have the {\em split invariance}
	$$\cS(\varphi^*g,\varphi^*h)=\cS(g,h)~.$$
	\end{enumerate}
\end{proposition}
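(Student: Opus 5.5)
The whole argument rests on two facts from Proposition~\ref{pro:ConformalChange}. The first is the conformal change identity $F_g-F_h=\d(\d u\circ\I)$ for $h=e^{2u}g$. The second is an integration by parts for compactly supported functions: by Stokes,
\[\int_S f\,\d(\d u\circ\I)=-\int_S \d f\wedge(\d u\circ\I).\]
This pairing is symmetric in $f$ and $u$. Indeed, in isothermal coordinates with $\I\partial_x=-\partial_x$ and $\I\partial_y=\partial_y$ (or the opposite convention, up to a global sign), we have $\d u\circ\I=-u_x\d x+u_y\d y$. Hence $\d f\wedge(\d u\circ\I)=(f_xu_y+f_yu_x)\,\d x\wedge\d y$, which is symmetric in $(f,u)$. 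This symmetry is the only slightly delicate point, and I will check it pointwise once; everything else is algebra.

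\textbf{Monotonicity.} Substituting $F_h=F_g-\d(\d u\circ\I)$ into $\frac14\int u(F_g+F_h)$ gives
\[\tfrac14\int u(2F_g-\d(\d u\circ\I))=\tfrac12\int uF_g-\tfrac14\int u\,\d(\d u\circ\I)=\cS(g,h).\]

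\textbf{Variational formula.} Differentiate $\cS(g,h_t)$ in $t$. The first term gives $\frac12\int\dot uF_g$. The second gives $-\frac14\int\big(\dot u\,\d(\d u\circ\I)+u\,\d(\d\dot u\circ\I)\big)$, which by the symmetry above equals $-\frac12\int\dot u\,\d(\d u\circ\I)$. Differentiation under the integral is legitimate because all supports lie in a fixed compact set (I will assume this for the family). Altogether the derivative is $\frac12\int\dot u\,(F_g-\d(\d u_t\circ\I))=\frac12\int\dot u\,F_{h_t}$. At $t=0$ with $u_0=0$ this is $\frac12\int\dot uF_g$. The general statement reads $F_{h_t}$ at the base point $h_t$; I will state it in this form, with $F_g$ when $g=h_0$.

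\textbf{Chasles.} Write $h_2=e^{2a}h_1$, $h_3=e^{2b}h_2$, so that $h_3=e^{2(a+b)}h_1$, and use $F_{h_2}=F_{h_1}-\d(\d a\circ\I)$. Expanding, $\cS(h_1,h_3)-\cS(h_1,h_2)=\frac12\int bF_{h_1}-\frac14\int\big(a\,\d(\d b\circ \I)+b\,\d(\d a\circ\I)+b\,\d(\d b\circ\I)\big)$. By symmetry the two cross terms together equal $-\frac12\int b\,\d(\d a\circ\I)$. This leaves $\frac12\int b\,(F_{h_1}-\d(\d a\circ\I))-\frac14\int b\,\d(\d b\circ\I)=\cS(h_2,h_3)$. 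Alternatively, one can integrate the variational formula along the path $e^{2tu}g$ and obtain Chasles directly.

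\textbf{Split invariance.} A split diffeomorphism $\varphi$ preserves the split structure, hence commutes with $\I$, that is $\varphi^*(\d u\circ\I)=\d(u\circ\varphi)\circ\I$. We also have $\varphi^*h=e^{2u\circ\varphi}\varphi^*g$ and $F_{\varphi^*g}=\varphi^*F_g$. Each integrand is therefore a pullback, and its integral is unchanged. Here I assume $\varphi$ is orientation-preserving; the split diffeomorphisms of the paper preserve the foliations and orientation.
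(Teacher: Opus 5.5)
Your proposal is correct and takes essentially the paper's approach. Chasles comes from expanding with $F_h=F_g-\d(\d u\circ\I)$ and the symmetry of $(u,v)\mapsto\int_S u\,\d(\d v\circ\I)$, and split invariance from pulling back the integrands. The differences are minor: you get monotonicity by direct substitution (the alternative the paper mentions rather than deriving it from Chasles), and you differentiate at arbitrary $t$ to get $\frac12\int_S \dot u_t\, F_{h_t}$, whereas the paper just uses $u_0=0$, so the quadratic term contributes nothing at $t=0$.
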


\begin{proof} For the first item, by definition
	\[\cS(g,h_t)= \frac{1}{2}\int_S u_t F_g - \frac{1}{4}\int_S u_t\ \d (\d u_t\circ \I)~,\] since $u_0=0$ the variational formula immediately holds. 
	
For Chasles formula, recall that by	proposition \ref{pro:ConformalChange}, we have if $h=e^{2u}g$ then  $\d(\d u \circ\I)=F_g-F_h$. Thus if $h=e^{2u}g$ 
 and $k=e^{2v}h$
\begin{align*}
\cS(g,h)+\cS(h,k)&=\frac{1}{4}\int_S 2(uF_g+vF_h)- u\d(\d u\circ \I)-v	\d(\d v\circ \I)\\
&=\frac{1}{4}\int_S 2((u+v)F_g) -2v \d(\d u \circ \I)- u\d(\d u\circ \I)-v	\d(\d v\circ \I)\\
&=\frac{1}{4}\int_S 2((u+v)F_g) -(u+v) \d(\d (u+v) \circ \I)\\
&=\cS(g,k)\ ,
\end{align*}
where we used that $(u,v)\mapsto \int_S u\d(\d v\circ \I)$ is symmetric in the third equality and that $k=e^{2(u+v)}g$ in the last. This proves Chasles formula.

The monotonicity formula is an immediate consequence of the fact that if $h=e^{2u}g$ then $g=e^{-2u}h$ thus using Chasles formula,
$$
\cS(g,h)=\frac{1}{2}(\cS(g,h)-\cS(h,g))=\frac{1}{4}\int_S u(F_g+F_h)\ .
$$	
Alternatively one can use proposition \ref{pro:ConformalChange} directly.
	
 Finally, the split invariance is a direct consequence of the change of coordinates formula. 

\end{proof}

The following corollary of the variational formula justifies the name of Liouville action. 

\begin{corollary}[\sc Critical point]\label{cor:critical_pt} 
   With the same notation as in Theorem~\ref{theo:VariationalFormula},  if $g$ in $ 
    \cM (S)$ is a critical point for all compact Weyl scaling preserving the area, then $g$ has constant curvature. More precisely, 
    if for all $u$ compactly supported functions on $\bA$ such that 
    $\int_S u \d a_g = 0$, we have 
    $$\left. \frac{\d}{\d t} \right |_{t = 0} \cS (g, e^{2tu} g) = 0,$$
    then, $g$ has constant curvature.
\end{corollary}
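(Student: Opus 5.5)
The plan is to combine the variational formula of Proposition~\ref{prop:liouville_expression} with a fundamental-lemma argument. For $h_t=e^{2tu}g$ we have $u_t=tu$, hence $\dot u=u$, and the variational formula gives
\[
\left.\frac{\d}{\d t}\right\vert_{t=0}\cS(g,e^{2tu}g)=\frac12\int_S u\,F_g=\frac12\int_S u\,K(g)\,\omega_g\ .
\]
The area form $\d a_g$ is the volume form $\omega_g$, so the hypothesis reads as follows: for every compactly supported smooth $u$ with $\int_S u\,\omega_g=0$, we also have $\int_S u\,K(g)\,\omega_g=0$.

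The second step is to conclude that $K(g)$ is constant. Consider the two linear functionals $\ell_1(u)=\int_S u\,\omega_g$ and $\ell_2(u)=\int_S uK(g)\,\omega_g$ on $C^\infty_c(S)$. The hypothesis says $\ker\ell_1\subset\ker\ell_2$, so $\ell_2=c\,\ell_1$ for some constant $c$: fix $u_0$ with $\ell_1(u_0)=1$ and apply the hypothesis to $u-\ell_1(u)u_0$. Hence $\int_S u\,(K(g)-c)\,\omega_g=0$ for every compactly supported $u$. Since $g$ is at least $C^2$, $K(g)$ is continuous, and the fundamental lemma of the calculus of variations gives $K(g)\equiv c$ on $S$.

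I do not expect a real obstacle here. The one point to check is that the variational formula in Proposition~\ref{prop:liouville_expression} is used in its general form, for the path $u_t=tu$ based at $g$, and not only at $h_t=g$. This holds because $\cS(g,h_t)$ is a polynomial in $t$ whose linear coefficient is exactly $\frac12\int_S uF_g$; the quadratic term $-\frac{t^2}{4}\int_S u\,\d(\d u\circ\I)$ does not contribute at $t=0$. If $S$ is not connected, the argument still works componentwise, and in fact it yields the same constant $c$ on all components. The statement writes $\bA$, but the argument is the same on any split surface $S$.
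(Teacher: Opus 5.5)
Your proposal is correct and follows the same route as the paper. The paper gives no written proof and presents this corollary as an immediate consequence of the variational formula in Proposition~\ref{prop:liouville_expression}; your direct computation of the $t$-derivative, the Lagrange-multiplier step (from $\ker\ell_1\subset\ker\ell_2$ to $\ell_2=c\,\ell_1$) and the fundamental lemma supply exactly the details it leaves implicit.
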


\subsection{Liouville action and the $\cW$-volume}

In this paragraph we show how the Liouville action between two metrics is related to the $\cW$-volume in many cases.

More precisely, we say that $S$ is {\em equivariantly immersed in $\Ein$} if there is an equivariant immersion from $S$ to $\Ein$.  
\begin{definition}{\sc (Product cobordism)}
We then say a cobordism $\bM=(M,\phi,\rho)$ equal outside a compact set is a {\em product cobordism} if \begin{itemize}
 \item 	$M=S\times [0,1]$. 
 \item The map $(x,0)$ to $(x,1)$ from $\phi_0$ to $\phi_1$ is conformal.
 \end{itemize}
In the situation of a product cobordism $\bM$, we thus obtain two conformal metrics $g^\bM_0$ and $g^\bM_1$ on $S$ as well as a conformal equivariant $\psi$ from $S$ to $\Ein$.	
\end{definition}

Conversely, thanks to Theorem \ref{theo:isotr-metr}, given an equivariant immersion $\psi$ of $S$ in $\Ein$ and two metrics $g_0$ and $g_1$ on $S$ equal  outside of a compact set, we obtain two (equivariantly) immersed holonomic surfaces $\phi_0$ and $\phi_1$ and a product cobordism $\bM_{\psi,g_0,g_1}$ equal outside a compact set between them. 

Thus as a consequence of Theorem \ref{theo:isotr-metr} there is a bijection between product cobordisms on $S\times[0,1]$ and triples $(\psi,g_0,g_1)$ where $\psi$ is an equivariant immersion from $S$ to $\Ein$ and $g_0$ and $g_1$ are conformal Lorentzian metrics on $S$. We thus denote a product cobordism as 
$$
\bM=\bM_{\psi,g_0,g_1}
$$

We can then relate the $\cW$-volume to the Liouville action using both constructions. 

\begin{theorem}\label{theo:cScW}
Given a product cobordism $\bM=(M,\phi,\rho)$ associated with $(\psi,g_0,g_1)$, then 
	$$\cW(M,\phi,\rho)=\cS(g_0,g_1)\ .$$
\end{theorem}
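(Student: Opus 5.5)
The plan is to interpolate between $g_0$ and $g_1$ and compare the derivatives of the two sides. Write $g_1=e^{2u}g_0$ with $u$ compactly supported, and set $g_t\defeq e^{2tu}g_0$ for $t\in[0,1]$. For each $t$, Theorem~\ref{theo:isotr-metr} applied to the equivariant immersion $\psi$ and the metric $g_t$ produces a holonomic surface $\phi_t$ (the $g_t$-Epstein surface). The family $(\phi_t)$ is constant outside the support of $u$, so each $\bM_t\defeq\bM_{\psi,g_0,g_t}$ is a product cobordism equal outside a compact set.

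First I will check that $\cW(\bM_t)$ depends only on the two boundary holonomic surfaces. This follows from closedness of $\omega$ and the remark after the definition, which also shows the result does not depend on the interior map. Chasles' relation for cobordisms then gives
$$\cW(\bM_{t+s})-\cW(\bM_{t})=\cW(\bM_{\psi,g_t,g_{t+s}}).$$
Consequently $f(t)\defeq\cW(\bM_t)$ satisfies $f(0)=0$. Its derivative is computed by Theorem~\ref{theo:VariationalFormula}, applied to the family of cobordisms in which only the boundary component $S\times\{1\}$ moves. Along that family the first fundamental form at infinity is $g_t$, and its infinitesimal conformal factor is $u$.

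The main obstacle is the orientation of the boundary. Since $\partial M=N_0\sqcup\overline N_1$, the moving component carries the opposite orientation. The theorem's $-\tfrac12\int_{\partial M}uF_g$ therefore becomes $+\tfrac12\int_S uF_{g_t}$ for the orientation of $S$, giving
$$f'(t)=\tfrac12\int_S u\,F_{g_t}.$$
I will verify this sign carefully, for instance on a local model in isothermal coordinates where $g_0=\d x\,\d y$. The sign must be consistent with Remark~\ref{rem:liouville_physics}, which says $\cS$ is positive-definite-like in $\partial_xu\,\partial_yu$.

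On the Liouville side, put $h(t)\defeq\cS(g_0,g_t)$. The Chasles formula gives $h(t+s)-h(t)=\cS(g_t,g_{t+s})$, and the variational formula of Proposition~\ref{prop:liouville_expression}, applied at base metric $g_t$, gives
$$h'(t)=\tfrac12\int_S u\,F_{g_t}.$$
(Alternatively one can compute directly from the definition: with $F_{g_t}=F_{g_0}-t\,\d(\d u\circ\I)$ by Proposition~\ref{pro:ConformalChange}, differentiating $\tfrac t2\int uF_{g_0}-\tfrac{t^2}4\int u\,\d(\d u\circ\I)$ gives the same expression.) Hence $f'=h'$ and $f(0)=h(0)=0$, so $f(1)=h(1)$, that is, $\cW(M,\phi,\rho)=\cS(g_0,g_1)$.

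A remaining point is that the given cobordism $\phi$ may not be the one built by interpolation. This is handled by the independence of $\cW$ from the interior, provided the two maps agree on the boundary and outside a compact set, which holds by construction. The equivariant setting changes nothing, since all integrands descend to $S$.
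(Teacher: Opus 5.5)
Your proposal is correct and follows essentially the paper's argument. Both quantities vanish when $g_0=g_1$, their derivatives along a path $g_t=e^{2u_t}g_0$ coincide by Theorem~\ref{theo:VariationalFormula} and the variational formula of Proposition~\ref{prop:liouville_expression} together with Chasles, and one integrates in $t$. Your explicit orientation bookkeeping, where the moving end is $\overline N_1$ so that $-\tfrac12\int_{\partial M}$ becomes $+\tfrac12\int_S$, is cleaner than the paper's. However, your suggested sign check via Remark~\ref{rem:liouville_physics} cannot work, because $\int\partial_x u\,\partial_y u\,\d x\,\d y$ is an indefinite quadratic form; the orientation argument alone settles the sign.
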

Observe that as a corollary $\cW(M,\phi,\rho)$ does not depend on the equivariant conformal immersion $\psi$.
\begin{proof}
	This result follows from two observations. First, given a product cobordism, $\phi_0=\phi_1$ exactly whenever $g_0=g_1$ due to Theorem \ref{theo:isotr-metr}. Therefore, if $\phi_0=\phi_1$ for a product cobordism
	$$
	\cS(g_0,g_1)=0\ ,
	$$
	and conversely if $g_0=g_1$ and for any immersion $\psi$, for the associated product cobordism we have 
	$$
	\cW(\bM_{\psi,g_0,g_1})=0\ .
	$$
	Now take $g_t=e^{2u_t}g_0$ and $(M,\phi_t,\rho)$ a product cobordism equal to $\bM_{\psi,g_t,h}$.
	The second observation is that using the variational formula  for the Liouville action in Proposition ~\ref{prop:liouville_expression}, and the variation formula for the $\cW$-volume in Theorem \ref{theo:VariationalFormula}, we have
	$$\left.\frac{\d}{\d t} \right\vert_{t=s} \cW(M,\phi_t,\rho)=\frac{1}{2}\int_S \left.\frac{\d}{\d t}\right\vert_{t=s} u_t\ \  F_{g_s}\ =\left.\frac{\d}{\d t}\right\vert_{t=s} \cS(g_s,g_t)=\left.\frac{\d}{\d t}\right\vert_{t=s} \cS(g_0,g_t)\ . $$
		where $g_t=e^{2u_t}g_0$ and $(M,\phi_t,\rho)$ is a product cobordism equal to $\bM_{\psi,g_t,h}$, and using Chasles relation in the last equality. This last observation concludes the proof of the theorem. Indeed, we can find a path of conformal metrics $g_t=e^{2u_t}g_0$ from $g_0$ to $g_1$
	\begin{align}
		\cS(g_0,g_1) &=\int_{0}^1\left.\frac{\d}{\d t}\right\vert_{t=s} \cS(g_0,g_t)\ \d s
		\\&=\int_{0}^1\left.\frac{\d}{\d t} \right\vert_{t=s} \cW(\bM_{\psi,g_0,g_s})\ \d s=\cW(\bM_{\psi,g_0,g_1}).
	\end{align}
	This concludes the proof.
\end{proof}

\subsection{Liouville action for the torus}

We concentrate here on the case $S$ is closed.  For reasons of Euler characteristic, the only case is that the surface $S$ is the torus $T^2$. This will turn out to be an easy case of the theory. In that context, we do not have to worry about compact sets. 

Again, by Gau\ss-Bonnet equation (for the case of Lorentzian metric), the only  metrics of constant curvature  carried by 
 $T^2$ are flat. In the case of the split structure $T^2=S^1\times S^1$, the conformal group of $T^2$ is $\operatorname{Diff}(S^1)\times \operatorname{Diff}(S^1)$ and thus $T^2$ carries an infinite-dimensional space of flat metrics conformal to each other: this is in sharp contrast with the Riemannian case. However, as an immediate consequence of the monotonicity formula in Proposition \ref{prop:liouville_expression}, we have
 \begin{proposition}\label{pro:onTorus}
 	Given two conformal flat metrics $g$ and $h$ on $T^2$, then
 	$$
 	\cS(g,h) = 0\ .
 	$$ 
 \end{proposition}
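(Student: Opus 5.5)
The plan is to apply the monotonicity formula of Proposition~\ref{prop:liouville_expression} directly. Since $T^2$ is closed, any function on it has compact support. So for two conformal metrics $g$ and $h$ on $T^2$ we may write $h=e^{2u}g$ with $u$ a smooth function on $T^2$, and the hypothesis ``equal outside a compact set'' holds automatically (take $K=T^2$). Hence $\cS(g,h)$ is defined, and the propositions of the previous subsection apply verbatim.

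Next I use the monotonicity formula
$$
\cS(g,h)=\frac{1}{4}\int_{T^2} u\,(F_g+F_h)\ .
$$
Recall that $F_g=K(g)\,\omega_g$. Flatness of $g$ and $h$ means $K(g)=K(h)=0$, so $F_g=F_h=0$ identically. The integrand therefore vanishes and $\cS(g,h)=0$.

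As a sanity check, the same conclusion follows from the definition. Proposition~\ref{pro:ConformalChange} gives $\d(\d u\circ\I)=F_g-F_h=0$, and $F_g=0$. Hence both terms in
$$
\cS(g,h)=\frac{1}{2}\int u F_g-\frac{1}{4}\int u\,\d(\d u\circ \I)
$$
vanish. The only point deserving care, and it is mild, is that the monotonicity and Chasles formulas were proved using integration by parts (the symmetry of $(u,v)\mapsto\int u\,\d(\d v\circ\I)$). On a closed surface this is justified by Stokes' theorem with no boundary terms, so there is no genuine obstacle.
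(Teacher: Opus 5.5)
Your proof is correct and is essentially the paper's own argument: the paper presents the proposition as an immediate consequence of the monotonicity formula $\cS(g,h)=\frac{1}{4}\int_{T^2}u(F_g+F_h)$, with $F_g=F_h=0$ by flatness. Your direct check from the definition, using $\d(\d u\circ\I)=F_g-F_h$, is also valid. It shows moreover that the monotonicity formula itself follows from Proposition~\ref{pro:ConformalChange} without any integration by parts.
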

 We can therefore define the {\em Liouville action for a conformal metric $h$ on $T^2$} by 
 $$
 \cS(h)\defeq \cS(g_0,h)\ ,
 $$
 where $g_0$ is any conformal flat metric on $T^2$. Observe that $\cS(g_0,h)$ does not depend on $g_0$ by Proposition \ref{pro:onTorus} and Chasles Formula in Proposition \ref{prop:liouville_expression}.

\section{Liouville action for the split annulus}

We now concentrate on the case of the split annulus. There are two reasons for this.

\begin{enumerate}
	\item In the case of the split annulus, we can move beyond the class of metrics equal outside a compact set, to a larger equivalence whose equivalence class is called the $\cS$-class.
	\item Moreover, in the case of the split annulus, we have an interesting class of metrics: those which have constant curvature. These metrics are not equal outside a compact set but are in the same $\cS$-class.
\end{enumerate}
Nevertheless, the case of the split annulus is more challenging than the torus case.
In this section, $\bA$ denotes  the split annulus.

\subsection{The $\cS$-class}

We denote by $\cM(\bA)$ the space of {\em weak metrics}, that is, the space of volume forms on $\bA$ defining measures absolutely continuous with respect to the de Sitter volume form.

We also denote by $\cM_b(\bA)$ the set of $C^2$ metrics in $\cM(\bA)$ whose sectional curvature is bounded (namely, in $\mathsf L^\infty(\bA)$).

\begin{definition}\label{def:Sclass}
Let $g$ be a  metric in $\cM_b(\bA)$. The {\em $\cS$-class of $g$} is the set $\cS_g$ of weak metrics $h$ in $\cM(\bA)$ of the form $e^{2u}g$ where $u$ satisfies
\begin{enumerate}
	\item the function $u$ belongs
	 to $\mathsf{L}^\infty(\bA,\d a_g)$ where $\d a_g$ is the area form of $g$ associated with the volume form $\omega_g$,
	\item the function $u$ tends to $0$ uniformly on $\partial \bA$ (that is, for any $\epsilon>0$, we have $\vert u\vert <\epsilon$ away from a compact set),
	\item the d'Alembertian $\dal_g u$ belongs to  $\mathsf{L}^\infty(\bA,\d a_g)$ and $\mathsf{L}^1(\bA,\d a_g)$
	\item  there is a polygonal curve $P$  such that  $\VB(u,P)$ is finite.	\end{enumerate}

Observe that if $u$ is $C^0$, then item (1) follows from item (2).

Moreover, condition (4) is easily satisfied, in particular when $u$ is in $C^1$.

We denote by $\cS_0$ the $\cS$-class of the de Sitter metric $g_0$.
\end{definition}

A \emph{polygonal curve} in $\bA$ is an oriented loop $ P$ made of finitely many lightlike segments and whose vertices project on each factor $\Rp$ to a cyclically oriented tuple (see Figure \ref{fig:polygonal}). We say that a tuple $(\alpha_1,\ldots,\alpha_{2k})$ \emph{represents} $P$ if
\begin{itemize}
\item the projection of $(\alpha_1,\ldots,\alpha_{2k})$ on each factor is cyclically oriented,
 \item for any $i$ the points $\alpha_{2i}$ and $\alpha_{2i+1}$ are the extremities of a vertical segment $[\alpha_{2i},\alpha_{2i+1}]$ contained in $ P$,
 \item for any $i$, the points $\alpha_{2i+1}$ and $\alpha_{2i+2}$ are the extremities of a horizontal segment $[\alpha_{2i+1},\alpha_{2i+2}]$ contained in $ P$,
 \item $ P= \bigcup_{i=1}^{2k}[\alpha_i,\alpha_{i+1}]$ with $\alpha_{2k+1}=\alpha_1$.
\end{itemize}
Observe that we do not assume that $\alpha_i\neq \alpha_{i+1}$, so we have many sequences that represent the same curve. For any function $u$ on $\bA$ and polygonal curve $ P$, we define
\[ \VB(u, P)=\sup \left\{\sum_{i=1}^{n}\vert u(\alpha_{2i-1})-u(\alpha_{2i})\vert \  ,\  (\alpha_1,\ldots\alpha_{2n})\hbox{ representing }  P\right\}\ .\]

\begin{figure}[ht] 
\begin{center}
\includegraphics[width=8cm]{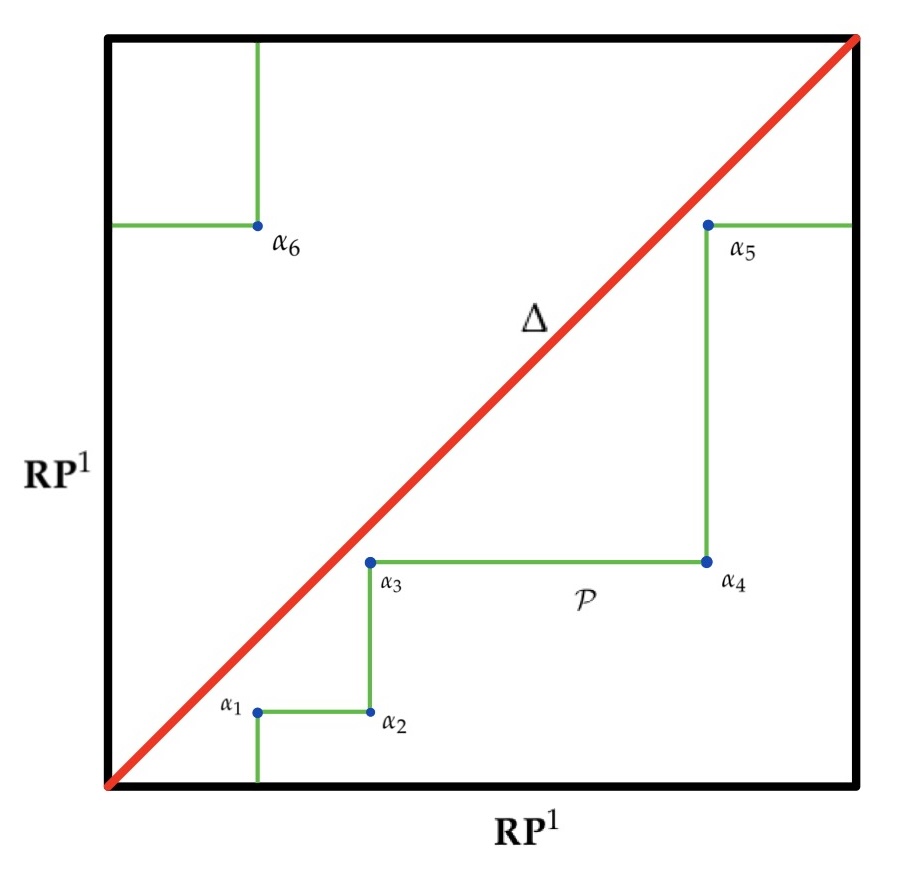}
\caption{A polygonal curve $\mathcal P$ represented by $(\alpha_1,...,\alpha_6)$} 
\label{fig:polygonal}
\end{center}
\end{figure}

\begin{lemma}\label{lem:Sclass-equi}
Let $g$ be a metric in $\cM_b(\bA)$. If $h$ is a $C^2$ metric in $\cS_g$, then $h$ is in $\cM_b(\bA)$ and $\cS_g=\cS_h$. In particular, being $C^2$ and in the same $\cS$-class defines an equivalence relation on $\cM_b(\bA)$.
\end{lemma}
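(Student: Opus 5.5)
The plan is to write $h=e^{2u}g$ with $u$ satisfying conditions (1)–(4) of Definition~\ref{def:Sclass}, and to check two things. First, that $h$ has bounded curvature. Second, that for any weak metric $k=e^{2v}g$, the function $w=v-u$ (so $k=e^{2w}h$) satisfies (1)–(4) relative to $h$ exactly when $v$ does relative to $g$. This gives the two inclusions $\cS_g\subset\cS_h$ and $\cS_h\subset\cS_g$. The equivalence relation then follows formally: reflexivity uses $u=0$, symmetry is $g\in\cS_h$, and transitivity is the equality of classes.

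\emph{Bounded curvature.} Since $h$ is $C^2$ and $g$ is $C^2$, the function $u=\tfrac12\log(h/g)$ is $C^2$. Proposition~\ref{pro:ConformalChange} gives
\[K(h)=e^{-2u}\bigl(K(g)-\dal_g u\bigr).\]
By (1), $e^{-2u}$ is bounded. By assumption and (3), both $K(g)$ and $\dal_g u$ are in $\mathsf L^\infty$. Hence $K(h)\in\mathsf L^\infty$, so $h\in\cM_b(\bA)$.

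\emph{Comparison of the conditions.} Since $u$ is bounded, the area forms satisfy $\d a_h=e^{2u}\d a_g$ with $e^{\pm2u}$ bounded. Therefore $\mathsf L^\infty(\d a_g)=\mathsf L^\infty(\d a_h)$ and $\mathsf L^1(\d a_g)=\mathsf L^1(\d a_h)$ with equivalent norms, and the notion of weak metric is the same for $g$ and $h$. The conditions then transfer as follows.
\begin{itemize}
\item[(1)] Since $u$ is bounded, $w=v-u$ is bounded if and only if $v$ is.
\item[(2)] Since $u\to0$ uniformly at $\partial\bA$, $w\to0$ uniformly if and only if $v\to0$ uniformly.
\item[(3)] Using \eqref{eq:daldal}, $\dal_h w=e^{-2u}(\dal_g v-\dal_g u)$. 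Since $e^{-2u}$ is bounded and $\dal_g u$ lies in $\mathsf L^\infty\cap\mathsf L^1$, the function $\dal_h w$ lies in $\mathsf L^\infty\cap\mathsf L^1$ (for either measure) if and only if $\dal_g v$ does.
\item[(4)] Subadditivity of the supremum over representing tuples gives $\VB(v-u,P)\le \VB(v,P)+\VB(u,P)$, and symmetrically.
\end{itemize}

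\emph{Main obstacle: condition (4).} The difficulty is that the polygonal curve witnessing finiteness for $u$ may differ from the one witnessing it for $v$. I would first try to show that finiteness of $\VB$ does not depend on the choice of $P$ for functions satisfying (2) and (3). The idea is to join $P$ and $P'$ by lightlike rectangles. For the function $u$, the difference of the increments along the opposite sides of a rectangle $R=[x_1,x_2]\times[y_1,y_2]$ is
\[\int_R \partial^2_{xy}u\,\d x\,\d y=\tfrac12\int_R\dal_g u\,\d a_g,\]
by Lemma~\ref{lem:isothermal}. This integral is controlled by $\|\dal_g u\|_{\mathsf L^1(\d a_g)}$. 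Summing over a decomposition of the region between $P$ and $P'$ into rectangles, together with boundedness of $u$ to handle the finitely many added vertices, should show that $\VB(u,P')<\infty$ whenever $\VB(u,P)<\infty$. This argument needs some regularity of $u$ along the segments; if $u$ is only weak, I would approximate, or alternatively simply take $P$ to be a common curve. If this independence holds, both $u$ and $v$ have finite variation along the same $P$, and then (4) follows by subadditivity.
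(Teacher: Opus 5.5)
Your argument is the paper's: bounded curvature from $K(h)=e^{-2u}(K(g)-\dal_g u)$, conditions (1)--(3) transferred using the boundedness of $e^{\pm 2u}$ and $\dal_h w=e^{-2u}\dal_g w$, and (4) by subadditivity of $\VB$. The ``main obstacle'' you raise in (4) is not actually an obstacle. Since $u$ is $C^2$ (as you noted) and every polygonal curve $P$ is a compact union of finitely many lightlike segments, $u$ is Lipschitz along $P$ and $\VB(u,P)<\infty$ for every $P$. You can therefore take $P$ to be the curve witnessing finiteness for $v$ (or for $w$, in the reverse inclusion) and conclude at once, as the paper does, so the rectangle argument for independence of $P$ (essentially Lemma~\ref{lem:BV}) is not needed here.
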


\begin{proof}
Write $h=e^{2u}g$. Using Proposition \ref{pro:ConformalChange} we have
\[\dal_g u = K(g)-e^{2u}K(h)~,\]
so $K(h)$ belongs to $\mathsf L^\infty(\bA)$ since $K(g)$ and $u$ do. It follows that $h$ is in $ \cM_b(\bA)$.

Since $u$ belongs to $\mathsf L^\infty(\bA)$, there exists a positive $A$ such that $A^{-1}<e^{2u}<A$. In particular, since $\d a_h= e^{2u} \d a_g$, we have $\mathsf L^1(\bA,\d a_g)=\mathsf L^1(\bA,\d a_h)$. 

Let $f$ be a metric in $\cS_g$, of the form $f=e^{2v}g$. Then $f=e^{2(v-u)}h$. It follows that $w=v-u$ satisfies items (1) and (2) of the definition above. Moreover, by equation~\eqref{eq:daldal}
$$
\dal_h w=e^{-2u}\dal_g w\ ,
$$
it follows that item (3) of the definition is satisfied. Hence $f$ belongs to $\cS_h$. Similarly for item (4), we observe that the condition is satisfied by $w$ which is $C^2$.

For item (4), let $P$ be a polygonal curve such that $\VB(v,P)<\infty$.
Since $u$ is $C^2$, we have $\VB(u,P)<\infty$. Hence
\[
\VB(w,P)\leq \VB(v,P)+\VB(u,P)<\infty .
\]
The result follows.
\end{proof}

For a weak metric $h$ in the $\cS$-class of $g$ with $h=e^{2u}g$, we write 
$$
F_h\defeq F_g- \d(\d u\circ I)\ .
$$
Observe that this notation is coherent for $C^2$-metrics by Proposition \ref{pro:ConformalChange}; moreover, $F_h$ does not depend on the choice of the $C^2$-metric $g$ such that $h$ belongs to the $\cS$-class of $g$. Finally, one observes that $F_h$ belongs to $L^\infty(\bA)$.
\subsection{The Liouville action}

\begin{definition}\label{def:Liouville}
Let $g$ and $h$ be two metrics in $\cM(\bA)$ of the same $\cS$-class. Write as before $h = e^{2u} g$. The {\em Liouville action} is then
\[\cS(g,h) = \frac{1}{2}\int_\bA uF_g - \frac{1}{4} \int_\bA u\ \d(\d u\circ \I)~.\]
\end{definition}

Observe that since $F_g= K(g) \d a_g$ and $\d(\d u\circ \I) = \dal_g u\ \d a_g$, our assumptions on the $\cS$-class imply that both integrals are defined.

\begin{proposition}\label{pro:propSclass}
Let $g$, $h$, and $k$ be metrics in $\cM_b(\bA)$ in the same $\cS$-class, write $h=e^{2u}g$, and let $\varphi$ be a $C^2$ split diffeomorphism of $\bA$. Then
\begin{enumerate}
	\item The monotonicity formula holds
	\[\cS(g,h) = \frac{1}{4}\int_\bA u(F_g+F_h)~.\]
	\item The metrics $\varphi^* g$ and $\varphi^*h$ are in the same $\cS$-class and 
	\[\cS(\varphi^* g,\varphi^* h)=\cS(g,h)~.\]
	\item Chasles relation holds
	\[\cS(g,h)+\cS(h,k) = \cS(g,k)~.\]
\end{enumerate}
\end{proposition}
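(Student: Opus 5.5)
The plan is to reduce all three items to the algebraic identities already used in the compactly supported case (Proposition~\ref{prop:liouville_expression}). The one genuinely new ingredient is an integration by parts showing that the bilinear form
\[
B(u,v)\defeq\int_\bA u\,\d(\d v\circ\I)
\]
is symmetric for functions $u,v$ satisfying the $\cS$-class conditions. Once symmetry is available, each item follows from the same short computation as before.

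\textbf{Key lemma: symmetry of $B$.} I would prove $B(u,v)=B(v,u)$ for $u,v$ of the form allowed in Definition~\ref{def:Sclass}. Exhaust $\bA$ by compact regions $D_n$ bounded by polygonal curves $P_n$ made of lightlike segments and tending to the boundary. In isothermal coordinates $\d(\d v\circ \I)=2\partial^2_{xy}v\,\d x\wedge\d y$. Stokes on $D_n$ gives
\[
\int_{D_n} u\,\d(\d v\circ\I)-v\,\d(\d u\circ\I)=\int_{P_n}\big(u\,\d v\circ\I-v\,\d u\circ \I\big).
\]
On a lightlike segment, $\d v\circ\I$ restricts to $\pm\d v$. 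The boundary term is therefore a sum over segments of expressions like $\int (u\,\d v - v\,\d u)$. Writing $u\,\d v-v\,\d u=\d(uv)-2v\,\d u$, this is bounded by $\sup_{P_n}(|u|+|v|)$ times the variations $\VB(u,P_n)+\VB(v,P_n)$.

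Since $u,v\to 0$ uniformly at $\partial\bA$, the boundary term tends to $0$, provided the variations along $P_n$ stay bounded. This is exactly what condition (4) is for. The step I expect to be the main obstacle is producing a sequence $P_n$ exhausting $\bA$ along which $\VB$ stays controlled, given only one polygonal curve with finite variation. I would first try to choose $P_n$ inside the region where $u$ and $v$ are $C^2$. For rough (weak) $u$, I would approximate by mollification and use the $\mathsf L^1$ bound on $\dal_g u$ to pass to the limit.

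The interior integrals converge to the full integrals by dominated convergence. This uses that $u,v\in\mathsf L^\infty$ and $\dal_g u,\dal_g v\in\mathsf L^1(\d a_g)$, which is the reason condition (3) includes $\mathsf L^1$. The integrals in Definition~\ref{def:Liouville} converge for the same reason, since $F_g=K(g)\,\d a_g$ and $u\in\mathsf L^\infty$ must pair with an integrable density. I would check this point separately.

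\textbf{Items (1)--(3).} For Chasles, write $h=e^{2u}g$ and $k=e^{2v}h$. Then $F_h=F_g-\d(\d u\circ\I)$ by definition of $F_h$ for weak metrics, and $v=w-u$ where $k=e^{2w}g$, so $v$ lies in the $\cS$-class conditions relative to $h$ by Lemma~\ref{lem:Sclass-equi}. The computation in the proof of Proposition~\ref{prop:liouville_expression} then goes through verbatim, using the symmetry of $B$ in place of compact support.

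Monotonicity follows by applying Chasles to $(g,h,g)$. Since $\cS(g,g)=0$, we get $\cS(g,h)=\tfrac12(\cS(g,h)-\cS(h,g))$, which expands to $\tfrac14\int u(F_g+F_h)$.

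For split invariance, Proposition~\ref{pro:SplitDiff} gives $\varphi=(\psi,\psi)$ with $\psi$ a $C^2$ diffeomorphism of $\Rp$. Then $\varphi^*h=e^{2u\circ\varphi}\varphi^*g$. Boundedness and decay at $\partial\bA$ are preserved because $\varphi$ extends to a homeomorphism of $\Rp\times\Rp$ fixing $\Delta$ setwise. The $\mathsf L^\infty$ and $\mathsf L^1$ conditions on the d'Alembertian are preserved by naturality, $\dal_{\varphi^*g}(u\circ\varphi)=(\dal_g u)\circ\varphi$ together with $\d a_{\varphi^*g}=\varphi^*\d a_g$. Polygonal curves are mapped to polygonal curves with the same $\VB$, since $\varphi$ preserves the split structure and cyclic order.

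Finally, the integrands are natural: $F_{\varphi^*g}=\varphi^*F_g$ and $\d(\d(u\circ\varphi)\circ\I)=\varphi^*\d(\d u\circ\I)$. The change of variables formula then gives $\cS(\varphi^*g,\varphi^*h)=\cS(g,h)$.
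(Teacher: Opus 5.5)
Your architecture for item (3) is the paper's. You reduce Chasles to the vanishing of $\int_\bA\big(u\,\d(\d v\circ\I)-v\,\d(\d u\circ\I)\big)$, which is your symmetry of $B$. You apply Stokes on regions bounded by lightlike polygonal curves $P_n$ accumulating on $\Delta$. You bound the boundary term by $\sup_{P_n}(|u|+|v|)\,\big(\VB(u,P_n)+\VB(v,P_n)\big)$.

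The gap is exactly at the step you flag as the main obstacle, and neither remedy you suggest closes it. Since $g,h,k\in\cM_b(\bA)$, the functions $u,v$ are already $C^2$ on all of $\bA$. Keeping $P_n$ in a region of $C^2$ regularity therefore only shows that each $\VB(u,P_n)$ is finite. It gives nothing uniform in $n$, because no derivative of $u$ is controlled as $P_n\to\Delta$. Mollification addresses regularity, not this uniformity. Without $\sup_n \VB(u,P_n)<\infty$, the product above need not tend to $0$. So the symmetry of $B$, hence item (3), and in your ordering item (1), is left unproved.

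The missing idea is Lemma~\ref{lem:BV}. Take two polygonal curves $P_0$ and $P_1$. Match each horizontal segment $[\alpha_{2i-1},\alpha_{2i}]$ of $P_0$ with the horizontal segment $[\beta_{2i-1},\beta_{2i}]$ of $P_1$ on the same vertical leaves. Together they bound a diamond $D_i$, and Stokes gives
\[
\int_{D_i}\d(\d f\circ\I)=\pm 2\big(f(\beta_{2i-1})-f(\beta_{2i})+f(\alpha_{2i})-f(\alpha_{2i-1})\big)\ .
\]
Summing over $i$ yields $\vert\VB(f,P_1)-\VB(f,P_0)\vert\leq\tfrac12\int_\bA\vert\dal_g f\vert\,\d a_g$. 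So the $\mathsf L^1$ half of condition (3), which you use only for dominated convergence of the interior integrals, is what you need here. It turns finiteness of $\VB$ on one curve (condition (4), automatic since $u$ is $C^2$) into a bound uniform over all polygonal curves (Corollary~\ref{cor:BV}). With it, your boundary estimate tends to $0$ and the rest of your argument goes through.

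Two smaller points. First, item (1) needs none of this. Substituting $\d(\d u\circ\I)=F_g-F_h$ into the definition gives it at once. That is how the paper proves it, before using it to reduce (3) to the symmetry statement. Deriving (1) from (3) needlessly makes it depend on the hard step.

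Second, your caveat about the convergence of $\int_\bA uF_g$ is well founded. The paper only asserts that the integrals are defined. On a surface of infinite area, the $\cS$-class conditions do not force $u\in\mathsf L^1(\d a_g)$. For instance, take $g=g_0$ and $u(x,y)=a(x)-a(y)$ with $a$ smooth and nonconstant. Then $\dal_{g_0}u=0$ and all four conditions hold, yet $u\notin\mathsf L^1(\d a_{g_0})$.
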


\begin{lemma}\label{lem:BV} Given any two embedded curves $P_0$ and $P_1$ then 
\[\left\vert \VB(f,P_0)-\VB(f,P_1)\right\vert\leq \tfrac{1}{2}\int_{\bA}\vert\dal_g(f)\vert\ \d a_g\ .\]
\end{lemma}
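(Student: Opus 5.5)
The plan is to reduce the statement to an integral-geometric bound along horizontal lightlike fibres, based on the following identity. In isothermal coordinates $(x,y)$, Lemma~\ref{lem:isothermal} together with Proposition~\ref{pro:ConformalChange} gives $\dal_g f\,\omega_g=\d(\d f\circ\I)=2\,\partial^2_{xy}f\,\d x\wedge\d y$. For a lightlike rectangle $R=[x_1,x_2]\times[y_1,y_2]$ contained in $\bA$, this yields
\[
f(x_2,y_2)-f(x_1,y_2)-f(x_2,y_1)+f(x_1,y_1)=\int_R\partial^2_{xy}f\ \d x\,\d y=\tfrac12\int_R\dal_g f\ \d a_g .
\]
All the error terms will be controlled by this identity.

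First, I will rewrite $\VB(f,P)$ as a total variation along horizontal fibres. By construction, the terms $|f(\alpha_{2i-1})-f(\alpha_{2i})|$ are increments of $f$ across the horizontal segments of $P$, evaluated at points of a partition of those segments. Because the projections of the vertices to each factor $\Rp$ are cyclically ordered, the projections of the horizontal segments of $P$ to the first factor tile $\Rp$ exactly once. Let $y_P\colon\Rp\to\Rp$ be the step function recording the height of the horizontal segment above $x$. Taking the supremum over representing tuples then gives
\[
\VB(f,P)=\mathrm{Var}_{\Rp}\bigl(x\mapsto f(x,y_P(x))\bigr),
\]
where the variation is taken piecewise on the intervals of constancy of $y_P$ and the contributions are summed. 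One point needs checking here: the vertical segments contribute nothing to the variation. This is built into the indexing, since the differences only pair $\alpha_{2i-1}$ with $\alpha_{2i}$. I will also check that the sup is insensitive to inserting repeated points.

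Second, I compare $P_0$ and $P_1$. Take a common refinement of $\Rp$ into intervals $I$ on which both $y_{P_0}\equiv y_0$ and $y_{P_1}\equiv y_1$ are constant. For $x\in I$ we have $y_0,y_1\neq x$, so there is a unique arc $J_x$ from $y_0$ to $y_1$ in $\Rp\setminus\{x\}$. The set $R_I=\{(x,y):x\in I,\ y\in J_x\}$ is then a lightlike rectangle contained in $\bA$. Using $|\mathrm{Var}(a)-\mathrm{Var}(b)|\le\mathrm{Var}(a-b)$ together with the rectangle identity applied to each partition interval $[x_1,x_2]\subset I$, I get
\[
\bigl|\mathrm{Var}_I f(\cdot,y_0)-\mathrm{Var}_I f(\cdot,y_1)\bigr|\le \mathrm{Var}_I\bigl(f(\cdot,y_0)-f(\cdot,y_1)\bigr)\le\tfrac12\int_{R_I}|\dal_g f|\ \d a_g .
\]
The rectangles $R_I$ have pairwise disjoint interiors, because they lie over disjoint $x$-intervals. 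Summing over $I$ therefore gives the claimed bound by $\tfrac12\int_\bA|\dal_g f|\,\d a_g$. The jumps of $y_P$ at endpoints of intervals are absorbed because vertical segments carry no contribution.

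The main obstacle is regularity. The lemma is used for functions $u$ in an $\cS$-class, where $\dal_g u$ is only known to be in $\mathsf L^1\cap\mathsf L^\infty$. In that setting the rectangle identity must be understood distributionally, and $f$ has to be evaluated pointwise along lightlike segments. I would first prove the bound for $C^2$ functions. For general $f$ I would then mollify in isothermal charts and pass to the limit. To do so I would use that, for such $f$, the corner increments are determined by $\partial^2_{xy}f$ as an $\mathsf L^1$ function, which forces $f(x,y)=a(x)+b(y)+\int\!\!\int\partial^2_{xy}f$ locally, so the identity holds pointwise everywhere for continuous representatives. I would also check lower semicontinuity of $\VB$ under this approximation.
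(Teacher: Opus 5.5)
Your proof is correct and follows the paper's argument. The paper likewise picks representing tuples of $P_0$ and $P_1$ whose horizontal segments sit over the same arcs of the first factor, so that each pair bounds a lightlike diamond; it then uses $\d(\d f\circ\I)=\dal_g f\,\omega_g$ and Stokes to bound the difference of the two increments by $\tfrac12\int_\Delta|\dal_g f|\,\d a_g$, and sums over the disjoint diamonds. Your rewriting of $\VB(f,P)$ as a total variation along horizontal fibres, via the common refinement and $|\mathrm{Var}(a)-\mathrm{Var}(b)|\le\mathrm{Var}(a-b)$, only spells out the passage to the supremum that the paper leaves to ``the result follows''. On regularity, your local decomposition $f=a(x)+b(y)+\iint\partial^2_{xy}f$ on each diamond already gives the corner identity for continuous $f$, so the mollification step is unnecessary.
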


\begin{proof} Let   $(\alpha_i)$ and $(\beta_j)$ be sequences representing $P_0$ and $P_1$ respectively, such that $\alpha_i$ and $\beta_i$ are on the same vertical segment. In other words, $(\alpha_{2i-1},\alpha_{2i}, \beta_{2i-1},\beta_{2i})$ are the vertices of a diamond $\Delta$. Observe now that
\[\int_{\Delta}\vert\dal_g(f)\vert\ \d a_g\geq \left\vert\int_{\Delta}\d(\d f\circ I)\ \right\vert=2\big\vert f(\beta_{2i-1})-	 f(\beta_{2i}) +f(\alpha_{2i})-	 f(\alpha_{2i-1})\big\vert\ .
\]Thus 
\[
\vert f(\beta_{2i-1})-	 f(\beta_{2i})\vert \leq  \vert f(\alpha_{2i-1})-	 f(\alpha_{2i})\vert+ \tfrac{1}{2}\int_{\Delta}\vert\dal_g(f)\vert\ \d a_g\ .
\]
The result follows. \end{proof}
This lemma has a useful corollary.
\begin{corollary}\label{cor:BV}
	Let $u$ be a $C^2$ function such that $e^{2u}g$ is  in the $\cS$-class of $g$. Then there is a constant $K_u$ such that for any polygonal curve $P$.
	$$
\VB(u,P)\leq K_u\ .
	$$
\end{corollary}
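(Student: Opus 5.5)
The plan is to deduce the corollary directly from Lemma~\ref{lem:BV}, using item (4) of Definition~\ref{def:Sclass} to anchor the bound on one polygonal curve.

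First, since $e^{2u}g$ lies in the $\cS$-class of $g$, item (4) gives a polygonal curve $P_0$ with $\VB(u,P_0)<\infty$. Item (3) gives $\dal_g u\in \mathsf L^1(\bA,\d a_g)$. We then set
$$
K_u\defeq \VB(u,P_0)+\tfrac{1}{2}\int_\bA \vert\dal_g u\vert\ \d a_g<\infty\ .
$$
For an arbitrary polygonal curve $P$, Lemma~\ref{lem:BV} applied to $f=u$ and the pair $(P_0,P)$ yields $\VB(u,P)\leq \VB(u,P_0)+\tfrac12\int_\bA\vert\dal_g u\vert\,\d a_g=K_u$. This bound is uniform in $P$, which is exactly the statement.

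The one point to check is that Lemma~\ref{lem:BV} really applies to every pair of polygonal curves. Its proof pairs representing sequences $(\alpha_i)$ of $P_0$ and $(\beta_j)$ of $P$ so that corresponding vertical segments bound lightlike diamonds. Because representing tuples may repeat points ($\alpha_i=\alpha_{i+1}$ is allowed), one can refine the two representations to a common number of vertices. Matching them in cyclic order, the diamonds $\Delta_i$ then have disjoint interiors, so summing the diamond inequalities uses $\int_\bA$ at most once. The diamond inequality itself uses Stokes' theorem for $\d(\d u\circ\I)$, valid since $u$ is $C^2$. Taking the supremum over representations of $P$ then gives the bound, since any representation of $P$ can be matched with some representation of $P_0$.

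I expect this matching step to be the only potential obstacle. If the diamonds could overlap, I would instead split the comparison into a chain of elementary moves, each moving a single vertex of the polygon. Each move sweeps out one lightlike rectangle, and the swept rectangles have disjoint interiors because the projections to each $\Rp$ factor are cyclically ordered. Beyond this, the argument is purely a finiteness check.
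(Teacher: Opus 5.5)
Your proposal is correct and follows the paper's proof: fix one polygonal curve $P_0$ with $\VB(u,P_0)<\infty$, then apply Lemma~\ref{lem:BV} together with $\dal_g u\in\mathsf L^1(\bA,\d a_g)$ to get the uniform bound $K_u=\VB(u,P_0)+\tfrac12\Vert\dal_g u\Vert_1$. The only difference is cosmetic: the paper gets finiteness on $P_0$ from $u$ being $C^2$, so any polygonal curve can serve as $P_0$, whereas you take $P_0$ from item (4) of the $\cS$-class definition; your extra check of the diamond matching in Lemma~\ref{lem:BV} is sound but not needed.
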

\begin{proof}
Since $u$ is $C^2$, then for any polygonal curve $P_0$, 
$\VB(u,P_0)$ is finite.	Moreover, by Lemma \ref{lem:BV} we have that for any polygonal curve $P$,
$$
\VB(u,P)\leq \VB(u,P_0)+\tfrac{1}{2}\Vert\dal_g(u)\Vert_1\ .
$$
The result follows.
\end{proof}

\begin{proof}[Proof of Proposition~\ref{pro:propSclass}]
Item $(1)$ follows from Proposition \ref{pro:ConformalChange}  since $\d (\d u\circ \I)=F_g-F_h$. Item $(2)$ follows from the change of variable formula.

For item $(3)$, write $h=e^{2u}g$ and $k=e^{2v}h$. Then by item $(1)$ we have
\begin{align*}
\cS(g,h)+\cS(h,k)-\cS(g,k) & = \frac{1}{4}\int_\bA\left( u(F_g+F_h)+v(F_h+F_k)-(u+v)(F_g+F_k)\right) \\
& = \frac{1}{4}\int_\bA \left(u(F_h-F_k) - v(F_g-F_h)\right) \\
& = \frac{1}{4}\int_\bA \left(u\ \d (\d v \circ \I) - v\ \d (\d u\circ \I)\right)~.
\end{align*}
Recall that $\bA=\Proj(V)\times\Proj(V)\setminus\Delta$ and parametrize $\Proj(V)$ by the circle $\mathbb R/\mathbb Z$. 

Let $\{C_N\}_{N\in\mathbb N}$ be a sequence of polygonal curves in $\bA$ that converges uniformly to $\Delta$, and denote by $V_N=\{\alpha_1^N,...,\alpha_{2k_N}^N\}$ the (cyclically oriented) vertices of $C_N$. By construction 

$$\lim_{N\to\infty}\sup_{x\in V_N}\max\{\vert u(x)\vert,\vert v(x)\vert\}=0.$$
Let $\bA_N$ be the bounded connected component of $\bA\setminus C_N$. Since $\d u\wedge \d v\circ \I = \d v\wedge \d u\circ \I$, applying Stokes to $\bA_N$ gives
\[\cS(g,h)+\cS(h,k)-\cS(g,k)=\frac{1}{4}\ \underset{N \to \infty}{\lim} \int_{C_N} (u\d v\circ \I - v\d u\circ \I)~.\]
Since each side of $C_N$ is lightlike, $\I$ acts by $\pm 1$ on its tangent direction. Hence
\begin{align*}
\left\vert\int_{C_N} (u\d v\circ \I - v\d u\circ \I)\right\vert
&\leq 
\sup_{x\in V_N}\vert u(x)\vert\,\VB(v,C_N)
+\sup_{x\in V_N}\vert v(x)\vert\,\VB(u,C_N)\\
&\leq
\sup_{x\in V_N}\max\{\vert u(x)\vert,\vert v(x)\vert\}
\bigl(\VB(u,C_N)+\VB(v,C_N)\bigr).
\end{align*}

The result now follows from  Corollary~\ref{cor:BV}. \end{proof}

\subsection{The Liouville action between two uniformizing metrics}

A {\em uniformization} of the conformal annulus $\bA$ is a $C^3$ split diffeomorphism between $\bA$ and $\dS_+$. The {\em uniformizing metric} is then the pullback of the de Sitter metric by a uniformization. In particular, any uniformizing metric is of the form $\Phi^*g_0$ where $g_0$ is the "standard" de Sitter metric and $\Phi$ is a $C^2$ conformal diffeomorphism of $\bA$. Here we prove the following.

\begin{proposition}\label{pro:ActionBetweenUniformizing}
Any uniformizing metric $h$ on $\bA$ is in $\cS_0$ and $\cS(h,g_0)=0$.
\end{proposition}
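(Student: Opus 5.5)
By Proposition~\ref{pro:SplitDiff}, a uniformizing metric is $h=\Phi^*g_0$ with $\Phi(x,y)=(\varphi(x),\varphi(y))$ for a $C^3$ diffeomorphism $\varphi$ of $\Rp$. The plan is to write $h=e^{2u}g_0$ explicitly, check the four conditions of Definition~\ref{def:Sclass}, and then use the monotonicity formula. In an affine chart where $g_0=\frac{2\,\d x\,\d y}{(x-y)^2}$ we get
\[
\Phi^*g_0=\frac{2\varphi'(x)\varphi'(y)\,\d x\,\d y}{(\varphi(x)-\varphi(y))^2},
\qquad
e^{2u(x,y)}=\frac{\varphi'(x)\varphi'(y)(x-y)^2}{(\varphi(x)-\varphi(y))^2}.
\]
After composing $\varphi$ with a projective map, which changes neither $g_0$ nor $h$, we may assume $\varphi$ fixes $\infty$, so the chart argument covers all of $\bA$ up to a compact region.

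The first step is the boundary behaviour. Since $\varphi$ is $C^3$, a Taylor expansion near the diagonal gives
\[
e^{2u}=1-\tfrac{1}{6}S(\varphi)(x)\,(x-y)^2+O(|x-y|^3),
\]
where $S(\varphi)$ is the Schwarzian derivative. This is a cross-ratio computation: $e^{2u}$ is the infinitesimal cross-ratio distortion. Hence $u$ is continuous on $\bA$ and tends to $0$ uniformly at $\Delta=\partial\bA$, which gives conditions (1) and (2). Condition (4) is automatic because $u$ is $C^2$.

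For condition (3), both metrics have curvature $1$, so Proposition~\ref{pro:ConformalChange} gives
\[
\dal_{g_0}u=1-e^{2u}=O\bigl((x-y)^2\bigr).
\]
This is bounded, so it lies in $\mathsf L^\infty$. The main obstacle is the $\mathsf L^1(\bA,\d a_{g_0})$ bound, because $\d a_{g_0}=\frac{2\,\d x\wedge \d y}{(x-y)^2}$ blows up at the diagonal. Near $\Delta$ the integrand behaves like $\frac{O((x-y)^2)}{(x-y)^2}=O(1)$, and a bounded integrand on a region of finite Lebesgue measure (a neighbourhood of the diagonal in a compact torus) is integrable. The point to check carefully is that the $O((x-y)^2)$ bound is uniform along the whole diagonal, including near $\infty$ after changing chart. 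I would handle this by covering $\Rp$ with two affine charts, using compactness of $\Rp$ and the $C^3$ regularity of $\varphi$, which bounds $S(\varphi)$ and the Taylor remainder uniformly. With this, $h\in\cS_0$.

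For the vanishing of the action, the monotonicity formula from Proposition~\ref{pro:propSclass}(1) gives
\[
\cS(h,g_0)=\tfrac14\int_\bA (-u)(F_h+F_{g_0}).
\]
Since both metrics have curvature $1$, $F_h+F_{g_0}=(e^{2u}+1)\,\d a_{g_0}$. This integral is absolutely convergent, because $u=O((x-y)^2)$ while $\d a_{g_0}\sim (x-y)^{-2}$, but it is not obviously zero. So I would instead use the invariance and Chasles properties. Connect $\varphi$ to the identity by a $C^3$ path $\varphi_t$, which works at least within the identity component, and reduce to orientation-preserving $\varphi$ via a projective reflection. Then differentiate with the variational formula: with $h_t=\Phi_t^*g_0=e^{2u_t}g_0$,
\[
\frac{\d}{\d t}\cS(g_0,h_t)=\tfrac12\int_\bA \dot u_t\,F_{g_0}.
\]
Using Chasles and split invariance (Proposition~\ref{pro:propSclass}(2)), it suffices to treat $t$ near $0$. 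There $\dot u$ is the infinitesimal form $\tfrac12(\partial_x X+\partial_y X)-\frac{X(x)-X(y)}{x-y}$ for a vector field $X$ on $\Rp$. This equals $\tfrac12\operatorname{div}_{g_0}$ of the lifted field $(X(x),X(y))$, so its integral against $\d a_{g_0}$ is a boundary term. That boundary term vanishes by the same polygonal-exhaustion argument used for Chasles, since the lifted field is tangent to the diagonal and $\dot u=O((x-y)^2)$. Integrating in $t$ then gives $\cS(g_0,h)=0$, and therefore $\cS(h,g_0)=0$ by antisymmetry from Chasles.
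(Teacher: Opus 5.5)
Your verification that $h\in\cS_0$ is the paper's argument: the Schwarzian expansion, whose correct form is $e^{2u}=1+\tfrac16 S_\varphi(x)(x-y)^2+o((x-y)^2)$ (plus sign, and only $o$ for $C^3$), together with $\dal_{g_0}u=1-e^{2u}$. Your reduction of $\cS(h,g_0)=0$ to $\int_\bA\alpha\,\d a_{g_0}=0$ is also correct. You go through a path $\varphi_t$, Chasles and split invariance, with $\alpha=\tfrac12\operatorname{div}_{g_0}\hat X$ and $\hat X=X(x)\partial_x+X(y)\partial_y$; this even covers diffeomorphisms not lying on a one-parameter subgroup, which the paper's reduction does not address.

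\textbf{The gap is the final step: the boundary term does not vanish for the reasons you give.} The form $\iota_{\hat X}\d a_{g_0}=\rho\,(X(x)\,\d y-X(y)\,\d x)$ with $\rho=c(x-y)^{-2}$ is singular at $\Delta$. A horizontal segment from $(y-2\epsilon,y)$ to $(y-\epsilon,y)$ contributes $-cX(y)/(2\epsilon)$. So there is no analogue of the Chasles estimate, which was a function small at the vertices times a bounded variation. Tangency of $\hat X$ to $\Delta$ only makes the normal component $O(\epsilon)$, which gives a useless flux bound $O(1/\epsilon)$. Tangency plus small divergence genuinely cannot suffice. Use the coordinate $\theta\in\mathbb R/\mathbb Z$ with $x=\tan(\pi\theta)$. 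Then $\d a_{g_0}=\rho(s)\,\d\theta_1\wedge\d\theta_2$ with $s=\theta_2-\theta_1\in(0,1)$ and $\rho(s)=c/\sin^2(\pi s)$. Take $Y=b(s)(\partial_{\theta_2}-\partial_{\theta_1})$ with $\rho b\equiv c_0$ near $s=0$ and $\rho b\equiv c_1$ near $s=1$. This field vanishes to second order at $\Delta$ and has compactly supported divergence $2(\rho b)'/\rho$. Yet $\int_\bA\operatorname{div}Y\,\d a_{g_0}=2(c_1-c_0)\neq 0$.

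So a correct argument must use the specific form of $\hat X$, and there are two ways to do it.

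\emph{Using your divergence identity directly.} In the $\theta$-coordinate, the flux of $\hat X$ through $\{s=\epsilon\}$ is exactly $\rho(\epsilon)\int_0^1\bigl(X(\theta)-X(\theta+\epsilon)\bigr)\,\d\theta=0$, by periodicity of $X$. The same holds through $\{s=1-\epsilon\}$. Since $\alpha\in\mathsf L^1(\d a_{g_0})$, exhausting $\bA$ by $\{\epsilon<s<1-\epsilon\}$ gives $\int_\bA\alpha\,\d a_{g_0}=0$.

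\emph{The paper's route, via the constant-curvature equation.} From $\dal_{g_0}u_t=1-e^{2u_t}$ you get $\int_\bA(1-e^{2u_t})\,\d a_{g_0}=\int_\bA\d(\d u_t\circ\I)$. Here the boundary $1$-form $\d u_t\circ\I$ is $O(|x-y|)$, because $u_t$ vanishes to second order on $\Delta$, so the integral is $0$ for every $t$. Differentiating at $t=0$ yields $\int_\bA\alpha\,\d a_{g_0}=0$. In your language, linearizing gives $\dal_{g_0}\alpha=-2\alpha$. This replaces your singular primitive $\tfrac12\iota_{\hat X}\d a_{g_0}$ by the bounded primitive $-\tfrac12\,\d\alpha\circ\I$, which vanishes at $\Delta$. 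That exchange, or the periodicity cancellation above, is the missing idea.
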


\begin{proof}
Let us first prove that $h$ is in the $\cS$-class of $g_0$. We have $h=\Phi^* g_0$ for a $C^2$ conformal diffeomorphism of $\bA$. By Proposition \ref{pro:SplitDiff}, $\Phi$ is given in the splitting $\bA=\Rp\times \Rp\setminus \Delta$ by $\Phi(x,y)=(\varphi(x),\varphi(y))$ where $\varphi$ is an orientation-preserving $C^2$-diffeomorphism of $\Rp$. In particular, identifying $\Rp$ with $\mathbb R\cup\{
\infty\}$ we have
\[h=\Phi^*\left(2\frac{\d x\ \d y}{(x-y)^2}\right)=2\frac{\varphi'(x)\varphi'(y)}{(\varphi(x)-\varphi(y))^2}\d x\ \d y~.\]
Thus, we can write $h=e^{2u}g_0$ for
\[u=\frac{1}{2}\log\left(\frac{(x-y)^2\varphi'(x)\varphi'(y)}{(\varphi(x)-\varphi(y))^2} \right)~.\]
 The following lemma follows immediately from \cite{Tyurin:1978aa}.
\begin{lemma}\label{lem:LinkWithSchwarzian}
Using the notation above, we have the following estimates \[u(x,y)= \frac{1}{12}(x-y)^2S_\varphi (x)+ o((x-y)^2),\]
where $S_\varphi$ is the Schwarzian derivative of $\varphi$. 
\end{lemma} 

\begin{proof}
   For $y=x+\epsilon$, the Taylor expansion of $\varphi$ and $\varphi'$ at $x$ gives the following expansion, as observed in \cite{Tyurin:1978aa}:
    \[\frac{\varphi'(x) \varphi'(y)}{(\varphi(x)-\varphi(y))^2} = \frac{1}{(x-y)^2} + \frac16 S_\varphi (x) + o (1).\]
    Plugging this into the expression of $u$, we obtain 
    \[u (x, y) = \frac12 \log \left(1 + \frac16 (x-y)^2 S_\varphi(x) + o ((x-y)^2)\right) =  \frac{1}{12}(x-y)^2S_\varphi (x)+ o((x-y)^2)\]
   as claimed. 
\end{proof}

The above lemma directly implies that $u$ tends to $0$ uniformly on $\partial\bA$ and thus $u$ belongs to 
$L^\infty(\bA)$. 

Observe that $K(g_0)=K(h)=1$, so by Proposition \ref{pro:ConformalChange}, $\dal_{g_0}u = e^{2u}-1$ is in $L^\infty(\bA)$. Moreover, $\dal_{g_0}u$ is in $\mathsf{L}^1(\bA,\d a_{g_0})$ by Lemma \ref{lem:LinkWithSchwarzian}. This completes the proof that $h$ is in $\cS_0$.

To prove that $\cS(h,g_0)=0$, observe that the split invariance implies that for a $1$-parameter subgroup $\{\Phi_t\}_{t\in \R}$ of split diffeomorphisms, we have
\[\cS(\Phi^*_{t+s} g_0,\Phi^*_s g_0) = \cS(\Phi^*_tg_0,g_0)~.\]
Hence, it suffices to prove that 
\[\left. \frac{\d}{\d t}\right\vert_{t=0} \cS(\Phi_t^*g_0,g_0) = 0\ ,\]
for any $1$-parameter subgroup. Let $u_t$ be such that   
$\Phi_t^* g_0=e^{2u_t}g_0$ and 
\[\alpha\defeq \left.\frac{\d }{\d t}\right\vert_{t=0}u_t~.\]
Let $\xi$ be the vector field that generates $\phi_t$, then we get 
$$
\alpha(x,y)=\frac{1}{2}\left(\xi'(x)+\xi'(y)\right) -\left(\frac{\xi(x)-\xi(y)}{x-y}\right)=O((x-y)^2)\ , 
$$
since $\xi$ is $C^3$. It follows that $\alpha $ is in $L^1(\d a_{g_0})$. In particular, we have 
\begin{align}
\left.\frac{\d}{\d t}\right\vert_{t=0} \cS(\Phi^*_tg_0,g_0) =  \int_\bA \alpha\  \d a_{g_0} ~.\label{eqref:Salapha}	
\end{align}
By Proposition \ref{pro:ConformalChange}, and since $g_0$ and $\Phi_t^*g_0$ have constant curvature $\kappa$
\[\d(\d u_t\circ \I)= F_{g_0}-F_{\Phi^*_tg_0} = \kappa(\d a_{g_0} -\d a_{\Phi^*_tg_0})=\kappa (1-e^{2u_t})\d a_{g_0}\ .\]By Lemma \ref{lem:LinkWithSchwarzian}, for every $t$, the function $u_t$ extends smoothly to $\Ein $ to a $C^2$-function vanishing on $\bA$. Thus, we have
\begin{equation}\label{eq:1}\int_{\bA}  \d \left(\d u_t\circ \I\right)=  0~.\end{equation}
It follows that 
\[
\int_\bA (1-e^{2u_t})\ \d a_{g_0}=0.\]
Taking the derivative at $t=0$ and using equation \eqref{eq:1} yields
\[\int_\bA \alpha\ \d a_{g_0}= 0~,\]
and the result follows by equation \eqref{eqref:Salapha}.
\end{proof}

\section{Positive curves}\label{sec:positive-curves}

\subsection{Crossratio}

Denote by $(\Rp)^{(4)}$ the space of $4$-tuples of pairwise distinct points in $\Rp$.

\begin{definition}
A {\em crossratio} is a continuous function $\bb$ from $(\Rp)^{(4)}$ to $\R$ that satisfies the cocycle relations
\begin{equation}\label{eq:Cocycle1}
\bb(x,w,X,Y)\bb(w,y,X,Y)=\bb(x,y,X,Y) \ ,
\end{equation}
\begin{equation}\label{eq:Cocycle2}
\bb(x,y,W,Y)\bb(x,y,X,W)=\bb(x,y,X,Y)~.
\end{equation}
We say that a crossratio is {\em positive} if it satisfies furthermore $\bb(x,y,X,Y)>1$ for any cyclically oriented $4$-tuple $(x,y,X,Y)$ in $(\Rp)^{(4)}$.
\end{definition}

Smooth positive crossratios are related to smooth metrics as follows. The {\em diamond} defined by a cyclically oriented $4$-tuple $(x,y,X,Y)$ in $(\Rp)^{(4)}$ is 
\[\delta(x,y,X,Y) \defeq \left\{ (u,v) \in [x,y]\times [X,Y]\right\} \subset \bA ~,\]
where we recall that $\bA= (\Rp\times\Rp)\setminus\Delta$.

Given a positive crossratio $\bb$, define the $\bb$-area of a diamond by
\[\text{Area}_\bb\left( \delta(x,y,X,Y)\right) = \log(\bb(x,y,X,Y))~.\]
One easily checks that the cocycle relations (\ref{eq:Cocycle1}) and (\ref{eq:Cocycle2}) are equivalent to the additivity of $\text{Area}_\bb$ (see Figure \ref{fig:crossratio}). Since a smooth metric is characterized by its area form, any smooth positive crossratio $\bb$ defines a unique {\em crossratio metric} $g_\bb$.

\begin{figure}[h] 
\begin{center}
\includegraphics[width=6cm]{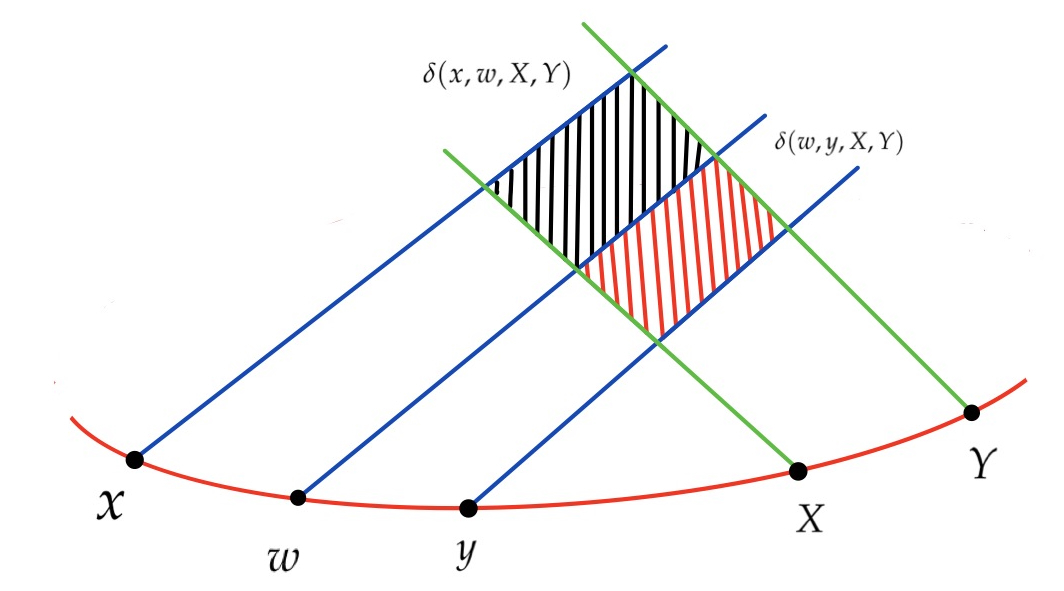}
\caption{Additivity of the crossratio area} 
\label{fig:crossratio}
\end{center}
\end{figure}

\subsection{Positive curves in flag varieties}\label{sec:positive-flag}

An important class of positive crossratios is obtained by considering positive curves in (self-dual) flag varieties (see \cite{Beyrer:2024aa}). Here, rather than providing an abstract definition of the theory, we highlight its key aspects and illustrate them with examples. The interested reader can consult \cite{Guichard:2025ab}.

Let $\G$ be a semisimple real Lie group with finite center and $\bF$ a flag variety of $\G$. The action of an element $g\in\G$ on $\bF$ is called {\em loxodromic} if there is a pair $(x_-,x_+)$ of $g$-invariant transverse points in $\bF$ such that if $U$ is the set of transverse flags to $x_-$, the sequence $\{g^n\}_{n\in\mathbb N}$ converges to the constant map $U \to \{x_+\}$ on any compact set in $U$. The points $x_+$ and $x_-$ are respectively called the attracting and repelling fixed points of $g$.

A {\em positive structure} on $\bF$ is a property of triples and quadruples of pairwise transverse flags that satisfy a set of axioms (see \cite{Guichard:2025aa} for more details). The set of pairs $(\G,\bF)$ admitting positive structures has been classified by Guichard--Wienhard in \cite{Guichard:2025ab} and is deeply connected with the theory of higher rank Teichm\"uller spaces.

As a consequence of the definition of positivity, we have for every transverse pair $p$ and $q$ in $\bF$ a finite family of nontrivial open cones $C^i_p(q)$ for $i$ in $\{1,\ldots,n\}$ in $\T_p\bF$ (which we identify with the Lie algebra of the unipotent radical of the stabilizer in $\G$ of $q$), such that for any $u$ in $C^i_p(q)$, the triple $(p,\exp(u),q)$ is positive. We call such a triple $(p,u,q)$ an {\em arrow}. By properties of positivity, if $(p,u,q)$ is an arrow, so is $(p,\lambda u,q)$. By positivity, the stabilizer of a positive triple is compact, and so is the stabilizer of an arrow.

This notion of positivity for $(\G,\bF)$ comes together with:
\begin{itemize}
	\item A notion of {\em positive curve}, which is a continuous map $\gamma$ from $\Rp$ to $\bF$ sending triples and quadruples of cyclically oriented pairwise distinct points in $\Rp$ to positive triples and quadruples of flags, respectively.
	\item A {\em conjugacy class of homomorphisms}  $\mathcal I$ from $\psld$ to $\G$, such that any semisimple element is mapped to a loxodromic element and $\iota(\psld)$ has compact centralizer for any $\iota$ in $\mathcal I$. 
	\item A notion of {\em circles} that are  $\iota$-equivariant positive curves $c$ from $\Rp$ to $\bF$ for $\iota$ in $\mathcal I$.
	\item For any dominant weight $\omega$ and positive curve $\gamma$, a positive crossratio $\bb_{\gamma,\omega}$ on $\gamma$; see \cite{Beyrer:2024aa}.
\end{itemize}

Fix $(\G,\bF)$ and a dominant weight $\omega$. By uniqueness of the crossratio on $\Rp$, there exists a positive integer $\lambda$ such that the positive crossratio associated with a circle has the form $$\bb_{c,\omega}(x,y,X,Y)=[x,y,X,Y]^\lambda\ ,$$
where $[\cdotp,\cdotp,\cdotp,\cdotp]$ is the standard projective crossratio on $\Rp$ normalized by
\[[0,1,x,\infty] = x~.\]

Thus, by construction, we can associate a metric $g_{\gamma,\omega}$ with any smooth positive curve $\gamma$ in $\bF$. When $\gamma$ is a circle, the metric is equal to $\lambda g_0$.

We define a {\em super-positive curve} to be a $C^1$-positive curve $f$ so that for any two distinct points $p$ and $q$ in $\Rp$, for any $u$ in $\T_p\Rp$, then $(f(p),\T_pf(u),f(q))$ is an arrow. We leave it to the reader to verify that the requirement that $f$ be positive is, in fact, redundant and that there exist $C^1$-positive curves that are not super-positive.

\begin{definition}[Liouville action for positive curves]
Let $(\G,\bF),~\omega$ and $\lambda$ be as above, and $\gamma$ a smooth positive curve in $\bF$ such that $g_{\gamma,\omega}$ is in the $\cS$-class of $\lambda g_0$. The {\em Liouville action of $\gamma$} is $$\cS(\gamma)\defeq\cS(\lambda g_0,g_{\gamma,\omega})\ .$$
\end{definition}

Observe that the Liouville action of curves is invariant under reparametrization. Indeed, if $\gamma$ is a smooth positive curve such that $g_{\gamma,\omega}$ is in the $\cS$-class of $\lambda g_0$, and $\varphi$ is a $C^3$ orientation-preserving diffeomorphism of $\Rp$, then 
\[g_{\delta,\omega} = \Phi^*g_{\gamma,\omega}~\]
for $\delta=\gamma\circ \varphi$ and $\Phi(x,y)=(\varphi(x),\varphi(y))$. In particular, by Chasles relation, we have
\[\cS(\lambda g_0,g_{\delta,\omega}) = \cS(\lambda g_0, \lambda \Phi^*g_0) + \cS(\lambda\Phi^*g_0, \Phi^*g_{\gamma,\omega}) = \cS(\lambda g_0,g_{\gamma,\omega})\]
where we used the split invariance and Proposition~\ref{pro:ActionBetweenUniformizing}. Similarly, the Liouville action is invariant under the left action of $\G$.

\subsection{Examples}

We now illustrate the notion of positivity on three different examples.

\subsubsection{The case $(\psld,\Rp)$}

Consider the pair $(\G,\bF)=(\psld,\Rp)$. In this setting, a positive triple is a triple of pairwise distinct points in $\Rp$ while a positive quadruple is a cyclically oriented 4-tuple of pairwise distinct points.

A positive curve is then an orientation-preserving homeomorphism, a super-positive one is a $C^1$-diffeomorphism, and a circle is just a M\"obius map.

In this situation, the dominant weight is unique and gives the standard cross-ratio. By invariance of the Liouville action under reparametrization, our invariant is always zero.

\subsubsection{The case $(\PO(2,2),\Ein )$}
Consider now $(\G,\bF)=(\PO(2,2),\Ein )$. A positive triple in $\Ein $ is a triple of points that spans a linear space of signature $(2,1)$. A positive quadruple is a $4$-tuple of points $(a,b,c,d)$ such that any subtriple is positive and whose projection on the first factor in $\Ein =\Rp\times\Rp$ is cyclically oriented.

Then a positive curve is of the form 
\[\function{
\gamma:}{\Rp}{\Rp\times \Rp\ ,}{t}{ (\psi(t),\phi(t))\ ,} \]
where $\varphi$ and $\psi$ are two orientation-preserving homeomorphisms. It is super-positive if and only if $\psi$ and $\phi$ are furthermore $C^1$-diffeomorphisms.

The group homomorphism $\iota$ is then the composition of the isomorphism between $\psld$ and $\SO_0(2,1)$ with the reducible embedding into $\PO(2,2)$. In particular, $\gamma$ is a circle when both $\phi$ and $\psi$ are M\"obius.

Let us now consider the Liouville action. By invariance under reparametrization, it is enough to consider the case $\psi=\Id$. There is a dominant weight $\omega$ such that the associated positive crossratio is
\[\bb_{\gamma,\omega}(x,y,X,Y) \defeq [x,y,X,Y]\cdotp [\varphi(x),\varphi(y),\varphi(X),\varphi(Y)]~.\]
When $\varphi$ is $C^3$, the crossratio metric is equal to $g_0+\Phi^*g_0$ for $\Phi(x,y)=(\varphi(x),\varphi(y))$ and so is in the $\cS$-class of $2g_0$ by Lemma \ref{lem:LinkWithSchwarzian}.

\subsubsection{The case $(\PSL_3(\R),\bF(\R^3))$}

Let $\bF(\R^3)$ be the space of full flags in $\R^3$ that we see as the set of pointed projective lines in $\mathbf{RP}^2$. A triple $(f_1,f_2,f_3)$ is positive if the points $(x_1,x_2,x_3)$ form a triangle inscribed in a triangle with edges $(\ell_1,\ell_2,\ell_3)$, where $f_i=(x_i,\ell_i)$. Similarly, a quadruple $(f_1,f_2,f_3,f_4)$ is positive if the points $(x_1,x_2,x_3,x_4)$ are the cyclically ordered vertices of a quadrilateral inscribed in a quadrilateral with edges $(\ell_1,\ell_2,\ell_3,\ell_4)$. See Figure \ref{fig:positivity}.

\begin{figure}[!h] 
\begin{center}
\includegraphics[width=15cm]{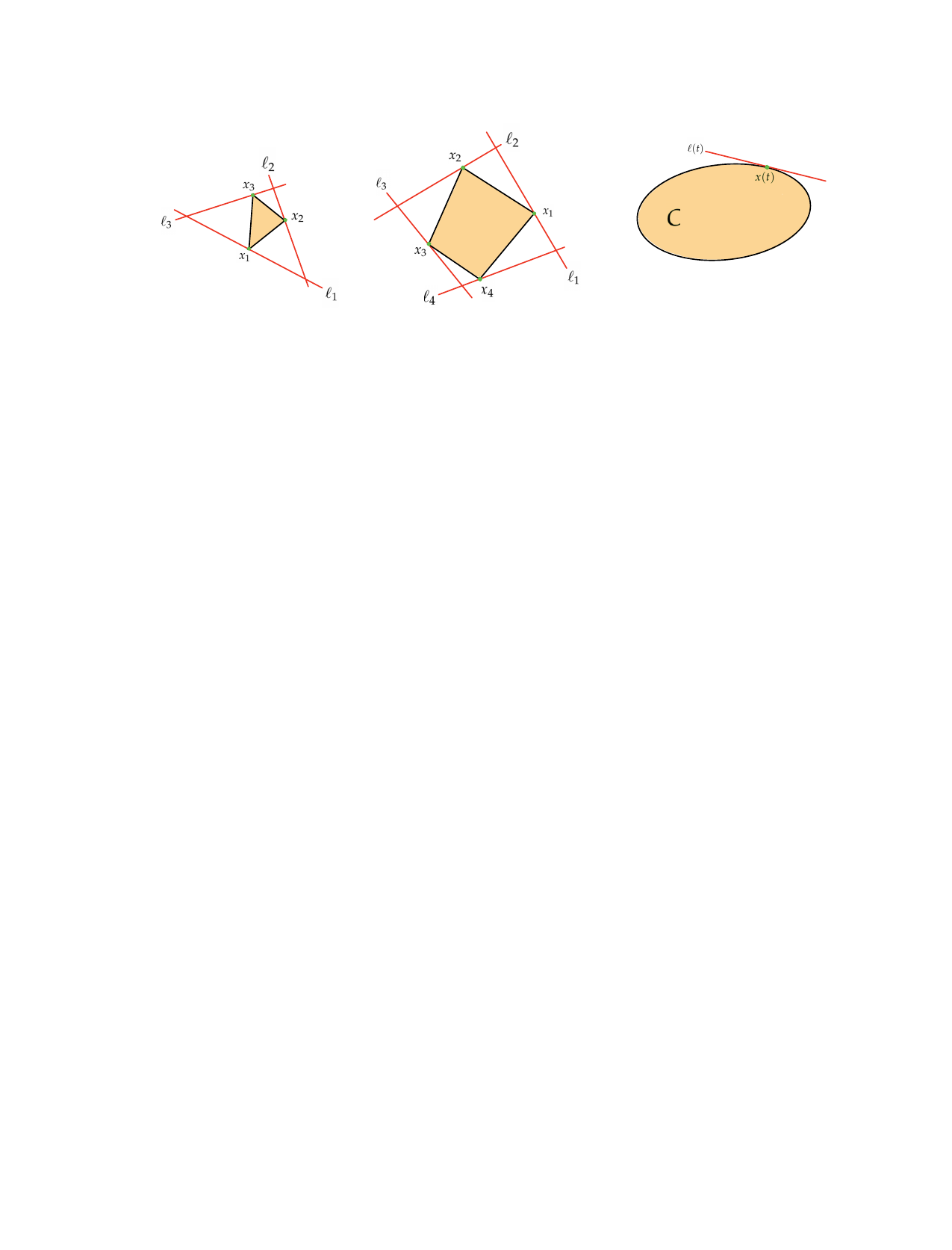}
\end{center}
\caption{From left to right: positive triple, quadruple, and curve} 
\label{fig:positivity}
\end{figure}

A positive curve in this setting is then a continuous map
\[\function{
\gamma:}{\Rp}{\bF(\R^3)\ ,}{t}{(x(t),\ell(t))\ ,} \]
where the curve defined by $x$ bounds a strictly convex set $C$ in $\mathbf{RP}^2$ and for any $t$ the line $\ell(t)$ is a support line of $C$ (see Figure \ref{fig:positivity}).

In this case, the group homomorphism $\iota$ is the composition of the isomorphism between $\psld$ and $\SO_0(2,1)$ with the embedding of $\SO_0(2,1)$ into $\PSL_3(\R)$. In particular, a circle in $\bF(\R^3)$ is the lift of a quadric in $\mathbf{RP}^2$ parametrized projectively. 

For the crossratio, there is a dominant weight $\omega$ such that
\[\bb_{\gamma,\omega}(t_1,t_2,s_1,s_2) = \frac{\langle \ell(s_1) \vert x(t_1)\rangle \langle \ell(s_2)\vert x(t_2)\rangle}{\langle \ell(s_1)\vert x(t_2)\rangle \langle \ell(s_2)\vert x(t_1)\rangle}~,\]
where to define $\langle \ell \vert x\rangle$ we choose a non-zero linear form on $\R^3$ with kernel $\ell$ and apply it to a chosen non-zero vector in $x$ (the above quotient is indeed independent of the choices). In particular, circles have Fuchsian crossratio of weight $2$.

\subsection{Piecewise circles and finiteness}

Given a conjugacy class of $\mathsf{SL}_2(\mathbb R)$ in $\G$, a {\em circle} (with respect to this class) is a closed orbit of an element of that conjugacy class $\psld$ in $\G$ isomorphic to $\Rp$.  This defines a family of circles on which $\G$ acts transitively. A {\em family of super-positive circles} is a family of positive curves such that 
\begin{enumerate}
	\item every circle in the family is  super-positive,
	\item there is a unique circle in the family passing through a given arrow.
\end{enumerate}

 Observe that there exist positive circles which are not super-positive: an example comes from the projection of the Veronese embedding (corresponding to the irreducible $\sld$ in $\mathsf{SO}(2,3)$) in the Einstein universe as in \cite[Section 5.3]{Collier:2019wu}. However, one can check that the examples in the previous paragraph are super-positive. Moreover, any positive circle invariant by the $\Theta$-positive subalgebra introduced in \cite[Section 7]{Guichard:2025ab} is super-positive. 

\begin{definition}\label{def:piecewise circle}
Let $(\G,\bF)$ be a positive flag variety. A {\em piecewise circle} is a $C^1$ positive curve $\gamma$ from $\Rp$ to $\bF$ that is piecewise a super-positive circle map. The endpoints of the intervals on which $\gamma$ is a circle map are called {\em turning points}. 
\end{definition}

\begin{theorem}\label{theo:Sclass}
Let $(\G,\bF)$ be a positive flag variety and $\omega$ a dominant weight such that circles have Fuchsian crossratio of weight $\lambda$. Then for any piecewise circle $\gamma$, the crossratio metric $g_{\gamma,\omega}$ is in the $\cS$-class of $\lambda g_0$. In particular, $\cS(\gamma)$ is finite.
\end{theorem}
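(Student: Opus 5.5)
Write $h\defeq g_{\gamma,\omega}=e^{2u}\lambda g_0$. We verify the four conditions of Definition~\ref{def:Sclass} for $u$ with respect to the metric $\lambda g_0$, which has constant curvature $1/\lambda$.

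The plan is to partition $\bA$ according to the finitely many turning points $t_1,\dots,t_m$ of $\gamma$. Let $I_1,\dots,I_m$ be the closed arcs of $\Rp$ on which $\gamma$ coincides with a super-positive circle $c_j$. The lines $\{t_i\}\times\Rp$ and $\Rp\times\{t_i\}$ cut $\bA$ into finitely many rectangles $I_j\times I_k$, each minus the diagonal.

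\textbf{Rectangles with $j=k$.} The crossratio $\bb_{\gamma,\omega}$ restricted to $I_j$ is the crossratio of the circle $c_j$. Hence on $(I_j\times I_j)\setminus\Delta$ the area form of $h$ coincides with that of $\lambda g_0$, so $u=0$ there. This is the only place where $u$ has to be controlled near $\Delta$.

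\textbf{Rectangles with $j\neq k$ and $I_j\cap I_k=\emptyset$.} These have compact closure in $\bA$. Since the crossratio is smooth on the interior of each such rectangle and continuous up to its edges, $u$ is bounded there.

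\textbf{Adjacent rectangles, $I_j\cap I_k=\{t\}$.} These touch $\Delta$ only at the corner $(t,t)$, and this is the main obstacle. The curve $\gamma$ is $C^1$ at $t$, so the arrow $(\gamma(t),\gamma'(t),\cdot)$ is shared by $c_j$ and $c_k$. By super-positivity and the uniqueness of the circle through an arrow, the two circles are tangent to first order at $t$. I would pick a model in which $c_j$ is the standard circle, so that $c_k=g\cdot c_j\circ\psi$ for some $g\in\G$ fixing the arrow at $t$ and some Möbius reparametrisation $\psi$. I would then expand $\bb_{\gamma,\omega}(x,y,X,Y)$ for $x,y\in I_j$ and $X,Y\in I_k$ near $t$. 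The expected behaviour is that the density $e^{2u}$ equals $1+O(\text{distance to }(t,t))$, the analogue of Lemma~\ref{lem:LinkWithSchwarzian} with a jump of the second derivative in place of the Schwarzian.

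Taking this for granted, the four conditions follow.
\begin{enumerate}
\item $u$ is bounded and tends to $0$ at $\partial\bA$: it vanishes near $\Delta$ off the corners and is $o(1)$ at the corners, which gives conditions (1) and (2).
\item On each rectangle interior both metrics are smooth, and Proposition~\ref{pro:ConformalChange} gives
\[
\dal u=\tfrac{1}{\lambda}-e^{2u}K(h).
\]
\item The curvature $K(h)$ of the crossratio metric on a mixed rectangle is bounded: I would compute it from the explicit $\G$-invariant formula for two circles, which is a smooth function of the relative position.
\item Across the cutting lines, the distributional d'Alembertian could carry singular measures along lightlike segments. However, $\d(\d u\circ\I)$ integrated over a diamond is an alternating sum of values of $u$ at its four corners, namely a log-crossratio. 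Since the crossratio is continuous and additive, no singular part appears, so $\dal u$ is an $L^\infty$ function.
\item For $L^1$, the function $\dal u$ vanishes on the diagonal rectangles, so it remains to check integrability against $\d a_{g_0}$ near the corners $(t,t)$. The bound $\dal u=O(\text{distance})$ from the corner expansion gives integrability against $\frac{\d x\,\d y}{(x-y)^2}$ on a quadrant wedge at the corner.
\item Finally, $u$ is piecewise $C^1$ with finitely many pieces, so $\VB(u,P)<\infty$ for any polygonal curve $P$. This is condition (4).
\end{enumerate}

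Finiteness of $\cS(\gamma)$ then follows from Definition~\ref{def:Liouville}.
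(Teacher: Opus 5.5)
Your reduction matches the paper's. You have $u=0$ on the diagonal rectangles $I_j\times I_j$, and the non-adjacent rectangles form a compact subset of $\bA$. Away from the corners, $u$ is continuous and smooth off finitely many lightlike lines, which gives local boundedness of $\dal u$ and $\VB(u,P)<\infty$. The reason no singular part appears along the cutting lines is that $u$ is $C^0$ and piecewise smooth, so its tangential derivatives are continuous across those lines; additivity of the crossratio alone does not exclude singular measures.

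\textbf{The gap.} The whole content of the theorem sits at the corners $(t,t)$ of adjacent rectangles, and there you write ``taking this for granted''. This is not a routine expansion:
\begin{itemize}
\item In a general positive flag variety there is no closed formula for $\bb_{\gamma,\omega}$ to expand.
\item The circles $c_j$ and $c_k$ agree only to first order at $\gamma(t)$. This comes from $\gamma$ being $C^1$, not from uniqueness of circles through arrows: they share a point and a tangent line, not an arrow, since otherwise they would coincide.
\item As $(x,X)\to(t,t)$ the configuration $(\gamma(x),\gamma(X))$ degenerates. So ``smooth dependence on the relative position'' in your item 3 gives no bound on $K(h)$ near the corner.
\item $o(1)$ decay would not suffice. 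A corner wedge $\{x<t<X\}$ has infinite $g_0$-area, so the $L^1$ condition needs a quantitative rate, $\dal u=O(\mathrm{dist}^\beta)$ for some $\beta>0$.
\end{itemize}
Your guess $O(\mathrm{dist})$ is right for $(\psld,\Rp)$ by direct computation, but in general it is exactly what must be proved. Your items 1, 3 and 5 all rest on it.

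\textbf{The missing idea} is the paper's renormalisation. Set $C_0=c_j$, $p=\gamma(t)$, $\ell$ the tangent line, $q$ the opposite point of $C_0$, and $H=\iota(h)$ the diagonal element of the $\psld$ of $C_0$ fixing $p$ and $q$. Then:
\begin{itemize}
\item $H$ acts on the space $\bC(p,\ell)$ of circles through $p$ tangent to $\ell$.
\item Compactness (Lemma~\ref{lem:compact}, via Rosenlicht) and super-positivity show that $C_0$ is an attracting fixed point with $\Vert\T_{C_0}H^{-1}\Vert<1$ (Lemma~\ref{lem:Attracting}). Super-positivity enters because any limit of $H^{-n}C$ passes through the arrow $(p,\ell,q)$, hence equals $C_0$.
\item With normalised parametrisations, $u_{H^{-1}C}=u_C\circ h$. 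So zooming into the corner is the same as moving $c_k$ towards $C_0$.
\item Decompose the corner region into the $g_0$-isometric $L$-shapes $L_k=h^k(L_0)$. The Lipschitz functionals $U_L,W_L,V_L$ on $\bC(p,\ell)$ vanish at $C_0$. Hence $\sup_{L_k}\vert u\vert\to0$, $\sup_{L_k}\vert\dal u\vert\to0$, and $\int_{L_k}\vert\dal u\vert\,\d a\leq K\theta^k$ with $\theta<1$, which is summable.
\end{itemize}
Without this argument, or an equivalent explicit analysis of the order of contact of the two circles in a highest-weight representation, the corner estimate, and hence the theorem, remain unproved.
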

\subsubsection{Preliminaries on super-positive circles}

Let $\alpha$ be the parameterization by $\Rp$ of a super-positive circle $C_0$ with $\alpha(0)=p$ and $\alpha(\infty)=q$. 
Let  $\iota$ be  the embedding of $\psld$ in $\G$ associated with $\alpha$ and $H\defeq\iota(h)$, where $h$ is a diagonal element in $\psld$ with attracting fixed point $0$ in $\Rp$.  
Recall that $\iota$ has a compact centralizer denoted by $\mathsf K$. Thus, if $\bC$ denotes the space of (unparametrized) circles in $\bF$ of the same type as $C_0$, we have
\[\bC = \G/\left(\iota(\psld)\times \mathsf K \right)~.\]
Set $\ell=\T_pC_0$ and define
\[\bC(p,\ell) = \{ D\in \bC~\vert ~ p\in D \text{ and }\T_pD=\ell\}~.\]

We will need several lemmas in the sequel.
\begin{lemma}[\sc Compactness]\label{lem:compact}
	Let $(p_0,u_0,q_0)$ be an arrow. Then there is a compact neighborhood  $K$ of $q_0$ such that
	\begin{itemize}
		\item for any $q$ in $K$, $(p_0,u_0,q)$ is an arrow.
		\item the set of circles in $\bC(p,\ell)$ that intersect $K$ is compact.
	\end{itemize} 
\end{lemma}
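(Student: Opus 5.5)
The plan is to derive both bullets from the openness of the arrow condition and from the description $\bC=\G/(\iota(\psld)\times\mathsf K)$, whose stabilizers are compact.

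\textbf{First bullet.} Being an arrow is an open condition in $q$. By definition, $(p_0,u_0,q)$ is an arrow when $q$ is transverse to $p_0$ and $u_0$ lies in one of the open cones $C^i_{p_0}(q)$. Transversality to $p_0$ is open. The cones $C^i_{p_0}(q)$ are obtained from a fixed model cone by conjugating with the unipotent radical of the stabilizer of $p_0$, which acts simply transitively on flags transverse to $p_0$. They therefore depend continuously on $q$. Since $u_0\in C^i_{p_0}(q_0)$ is interior, it remains in $C^i_{p_0}(q)$ for $q$ near $q_0$. So any sufficiently small compact neighborhood $K$ of $q_0$ satisfies the first bullet.

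\textbf{Second bullet.} I read $(p,\ell)$ as $(p_0,\R u_0)$. Take a sequence of circles $D_n\in\bC(p_0,\ell)$ meeting $K$ at points $q_n$; after shrinking $K$ we may assume $q_n\to q_\infty\in K$. Each $D_n$ passes through $p_0$ and $q_n$ and is tangent to $\ell$ at $p_0$. By super-positivity, $(p_0,u_0,q_n)$ is an arrow, with the sign of $u_0$ fixed by the orientation. By the uniqueness axiom for super-positive families, $D_n$ is then the unique circle through the arrow $(p_0,u_0,q_n)$.

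It therefore suffices to show that the map from arrows to circles, $(p,u,q)\mapsto D(p,u,q)$, is continuous. Then $D_n\to D(p_0,u_0,q_\infty)$, which lies in $\bC(p_0,\ell)$ and meets $K$, giving sequential compactness; and the set in question is closed by continuity plus compactness of $K$.

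For continuity, I would use homogeneity. The set of arrows is acted on by $\G$, and the set of parametrized circles equipped with a marked arrow, namely the $\G$-orbit of $(\alpha(0),\alpha'(0),\alpha(\infty))$ twisted by the scaling $u\mapsto\lambda u$, is a homogeneous space. By uniqueness, the map to arrows is injective up to this scaling. The scaling is realized by the diagonal element $H$, so $\G/(\iota(\mathrm{diag})\mathsf K)$ maps injectively onto the space of projectivized arrows (arrows up to scaling of $u$). If the space of projectivized arrows is a single $\G$-orbit, the inverse map is a homeomorphism, because an equivariant bijection between homogeneous spaces of a Lie group is a diffeomorphism. This would give continuity of $D$.

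\textbf{Main obstacle.} The hardest step is justifying this orbit structure: that the arrows locally form a single $\G$-orbit, or at least that the arrows $(p_0,u_0,q)$ with $q\in K$ all come from circles. If transitivity is unavailable, I would fall back on a direct argument. Write $D_n=g_n\cdot C_0$ with $g_n\in\G$ normalized so that $g_n$ maps $(\alpha(0),\alpha'(0),\alpha(\infty))$ to $(p_0,u_0,q_n)$ up to scale. I would then show $g_n$ stays bounded. This uses the fact that the stabilizer of an arrow is compact, together with the convergence of the arrows $(p_0,u_0,q_n)$. A limiting argument shows the $g_n$ lie in a compact set modulo $\mathsf K$, so a subsequence of $D_n$ converges in $\bC$.
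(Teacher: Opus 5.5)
\textbf{Verdict.} Your first bullet is fine and agrees with the paper, which uses openness of positivity of triples. The second bullet has a genuine gap. You reduce it to continuity of the map $(p,u,q)\mapsto D(p,u,q)$ from arrows to circles, but that continuity is never established.

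\textbf{Where the argument fails.} Your primary argument is only conditional (``if the space of projectivized arrows is a single $\G$-orbit''). Your fallback claims that compact arrow stabilizers, together with convergence of the arrows $(p_0,u_0,q_n)\to(p_0,u_0,q_\infty)$, force the $g_n$ to stay bounded modulo a compact group. That implication is false in general. For an action with compact, even trivial, stabilizers, one can have $g_n\cdot a\to a'$ with $g_n\to\infty$ whenever the orbit accumulates on itself; think of an irrational linear flow on a torus. The missing ingredient is that the relevant orbit is locally closed, so that the orbit map $\G/\mathrm{Stab}\to\text{orbit}$ is a homeomorphism onto its image.

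\textbf{How the paper supplies it.} The paper gets local closedness from Rosenlicht's theorem. The stabilizer $\G_1$ of $(p_0,u_0)$ acts algebraically on $\bF$, so $\G_1\cdot q_0$ is locally closed. Hence $K\cap \G_1\cdot q_0$ is compact for $K$ small. Take $C_0$ to be the circle through $(p_0,u_0,q_0)$. Every circle of $\bC(p,\ell)$ meeting $K$ is then $g\cdot C_0$ for some $g\in\G_1$ with $g\cdot q_0\in K$. The fibres of $g\mapsto g\cdot q_0$ on $\G_1$ are cosets of the stabilizer of the arrow $(p_0,u_0,q_0)$, which is compact. So the set of such $g$ is compact, and its image in $\bC(p,\ell)$ is compact too.

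\textbf{An alternative way to close your route.} When you assert that $\G/(\iota(\mathrm{diag})\mK)$ maps \emph{onto} the projectivized arrows, you are already using the existence half of axiom (2) in the definition of a family of super-positive circles. That surjectivity is exactly the transitivity you then list as the main obstacle, so the two statements are in tension.

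If you invoke the existence clause explicitly, your route can be completed. The arrows form a locally compact space, being an open family of cones over transverse pairs. The open mapping theorem for transitive actions of a second-countable Lie group (a Baire category argument) then makes your equivariant bijection a homeomorphism. This gives continuity of $D$, and your compactness argument via ``continuous image of $K$'' goes through.

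Compared with the paper, this variant trades Rosenlicht's algebraic input for the existence part of the axiom. The paper's argument does not rely on existence: it only works with circles that actually meet $K$. As written, however, your proposal carries out neither the Rosenlicht step nor the transitivity step, so the proof of the second bullet is incomplete.
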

\begin{proof} Observe that being positive for triples is open. Thus, for $K$ a small enough neighborhood of $q$, the first item holds.
	Let, as before, $\G_1$ be the stabilizer in $\G$ of $(p,u)$. Observe that $\G_1$ acts algebraically in the algebraic variety $\bF$.
	By a corollary of Rosenlicht (Theorem \cite[Corollary 2.2.a]{Gromov:1988aa}), the orbits of $\G_1$ are embedded. 
	In particular, $K\cap \G_1\cdot q$ is compact for $K$ small enough. Since  $\bC(p,\ell)=\G_1/\mK_1$, for some compact subgroup $\mK_1$, the second item is satisfied.
\end{proof}

\begin{lemma}[\sc Contraction]\label{lem:Attracting} The action of $H$ on $\bC$ preserves $\bC(p,\ell)$. Moreover,
the action of $H^{-1}$ on $\bC(p,\ell)$ has the circle  $C_0$ as an attracting fixed point and more precisely
$$
\Vert \T_{C_0} H^{-1}\Vert <1\ .
$$
\end{lemma}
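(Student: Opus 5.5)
Both claims reduce to the structure of the stabilizer of the pointed tangent line $(p,\ell)$. Let $\G_1$ be the stabilizer of $(p,\ell)$. By Lemma~\ref{lem:compact}'s proof, $\bC(p,\ell)=\G_1/\mK_1$.

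\emph{Invariance.} $H=\iota(h)$ with $h$ diagonal fixing $0$ and $\infty$. Since $\alpha$ is $\iota$-equivariant, $H$ fixes $p=\alpha(0)$ and preserves $C_0$, hence preserves $\ell=\T_pC_0$. Any $D\in\bC(p,\ell)$ satisfies $p\in D$ and $\T_pD=\ell$. Then $HD$ is a circle of the same type (the family is $\G$-invariant) containing $Hp=p$ with tangent $\d H(\ell)=\ell$. So $H$ preserves $\bC(p,\ell)$ and fixes $C_0$.

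\emph{Contraction.} I would compute the linearization at $C_0$ Lie-theoretically. The tangent space is $\T_{C_0}\bC(p,\ell)=\mathfrak g_1/(\mathfrak g_1\cap(\iota(\mathfrak{sl}_2)\oplus\mathfrak k))$, where $\mathfrak g_1$ is the Lie algebra of $\G_1$. Decompose $\mathfrak g$ under $\iota(\mathfrak{sl}_2)$ into irreducibles $V_d$, with weights of $\operatorname{ad}(\log H)$ in $\{-d,\dots,d\}$. Here $\mathfrak k$ is the zero-weight trivial part, and $V_2=\iota(\mathfrak{sl}_2)$. Because $p$ is the attracting fixed point of $H$, the stabilizer $\mathfrak p$ of $p$ is the sum of the nonnegative weight spaces, in the convention where positive weights are those $H$ expands. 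The unipotent radical consists of the positive weights, and $\T_p\bF$ is identified with the negative weight part.

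The stabilizer of the line $\ell$ must include the negative-weight vector of $V_2$ that moves $p$ along $C_0$. It must also cut out, inside $\mathfrak p$, the directions that rotate $\ell$. These lie in the weight-$1$ components of $\operatorname{ad}$ acting on $\T_p\bF$: the infinitesimal action of $\mathfrak p$ on the projectivized $\T_p\bF$ at $\ell$ has weight shifts of the form $(\text{weight of }\mathfrak p\text{ element})$ modulo the scaling. I would work out that the quotient $\mathfrak g_1/(\mathfrak g_1\cap(\iota\mathfrak{sl}_2\oplus\mathfrak k))$ is spanned by highest-weight-type vectors of the $V_d$ with $d\ge 2$, lying in weights $\ge 2$ (or at least $>0$). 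These are the vectors not in $V_2$ or in the trivial summands, with weight $\geq 1$ after removing $\mathfrak k$. The super-positivity of circles, namely that there is a unique circle through each arrow, is exactly what I would use to guarantee that no zero-weight directions survive. A zero-weight direction outside $\mathfrak k$ would give a one-parameter family of circles tangent at $p$ to the same arrow, contradicting uniqueness.

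Hence $\operatorname{Ad}(H)$ acts on $\T_{C_0}\bC(p,\ell)$ with all eigenvalues of modulus $>1$, so $\T_{C_0}H^{-1}$ has all eigenvalues of modulus $<1$. Since $\operatorname{Ad}(H)$ is diagonalizable with real eigenvalues, the operator norm is $<1$ for a suitable (adapted Euclidean) norm. This gives attraction, and compactness in Lemma~\ref{lem:compact} is not needed for the local statement.

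\emph{Main obstacle.} The hard step is the weight bookkeeping. One must verify that the directions preserving the tangent line but moving the circle all have strictly positive weight. In particular, one must exclude weight-zero contributions, via uniqueness of circles through arrows, and negative-weight ones, which would move $p$ or $\ell$.
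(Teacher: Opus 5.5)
Your invariance argument is fine. Your exclusion of weight-zero directions is essentially the paper's argument that $1$ is not an eigenvalue, once you note that an element of $\mk g_1$ commuting with $H$ also fixes $q$. Such an element therefore moves $C_0$ among circles through the same arrow, contradicting uniqueness.

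The gap is in excluding negative weights, i.e.\ directions expanded by $H^{-1}$; this is the real content of $\Vert \T_{C_0}H^{-1}\Vert<1$. You rest it on the claim that the stabilizer $\mathfrak q$ of $p$ is the sum of the nonnegative weight spaces ``because $p$ is the attracting fixed point of $H$''. Attraction at $p$ only says that $\T_p\bF=\mk g/\mathfrak q$ carries negative weights, i.e.\ $\mathfrak q\supseteq\bigoplus_{\lambda\geq 0}\mk g_\lambda$. Equality holds only when $\mathfrak q$ is the parabolic defined by $\log H$, and that is not among the hypotheses. For the irreducible $\sld$ in $\mathsf{SO}(2,3)$ acting on the Einstein universe (the example the paper mentions), $\log H$ is regular and $\mathfrak q$ contains a weight $-2$ vector. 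So you would have to show that such vectors never preserve $\ell$. Your text only asserts this (``they would move $p$ or $\ell$''), and the remaining bookkeeping is left as ``I would work out''. The sentence saying the stabilizer of $\ell$ contains the negative-weight vector of $\iota(\mathfrak{sl}_2)$ is also wrong, since that vector moves $p$ and so is not in $\mk g_1$. Most importantly, you never use that circles are \emph{positive} curves or that arrows have compact stabilizers. It is precisely such an input that is needed to rule these directions out.

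The paper supplies this input dynamically. For $C\in\bC(p,\ell)$, positivity makes every point of $C\setminus\{p\}$ transverse to $p$, so $H^{-n}$ attracts it to $q$. Lemma~\ref{lem:compact} and uniqueness of circles through an arrow then give $H^{-n}C\to C_0$. This global attraction forbids eigenvalues of $\T_{C_0}H^{-1}$ larger than $1$, and uniqueness of the fixed point forbids the eigenvalue $1$.

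If you want to keep your infinitesimal route, you can repair it with the compactness of arrow stabilizers stated in Section~\ref{sec:positive-flag}:
\begin{enumerate}
\item The nilradical of $\mathfrak q$ is Killing-dual to $\mk g/\mathfrak q$, so it has only positive weights.
\item Hence negative-weight elements of $\mk g_1$ lie in the Levi factor $\mathfrak q\cap\mathfrak q^-$, where $\mathfrak q^-$ is the stabilizer of $q$. So they stabilize $(p,\ell,q)$.
\item The Lie algebra of that stabilizer is $\mathfrak k'\oplus\R\log H$, with $\mathfrak k'$ the compact stabilizer of the arrow.
\item A nonzero-weight element there has vanishing scaling character on $\ell$, so it lies in $\mathfrak k'$.
\item Being also $\operatorname{ad}$-nilpotent, it must be zero.
\end{enumerate}
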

\begin{proof} 
The first statement follows from the definition of $\bC(p,\ell)$.
The element $H^{-1}$ is loxodromic with an attracting fixed point $q$ in $C_0$ and repelling fixed point $p$. Since $\bF$ is equal to its opposite, the attraction basin of $q$ is the set of elements of $\bF$ that are transverse to $p$. 

Let $C$ be an element of $\bC(p,\ell)$. Since $C$ is positive,  given $x$ in $C\setminus p$, $x$ is transverse to $p$. 

It follows that for any such $x$, $\{ H^{-n}x\}_{n\in \mathbb N}$ converges to $q$. 
Then by Lemma \ref{lem:compact}, we have that $\{H^{-n} D\}_{n\in \mathbb N}$ converges to a circle in $\bC(p,\ell)$ passing through $q$ and $p$ and tangent to $\ell$, which thus must be equal to $C_0$ by the second item in the definition of super-positive circles.

Let $\G_1$ be the subgroup of $\G$ preserving $(p,\ell)$,  $\ms L_0$ the centralizer in $\G$ of $H$, $\ms L_1$ the centralizer in $\G_1$ of $H$. Then  $(p,\ell)$ is fixed by $\ms L_1$ since $H$ has a unique fixed point in $\bC(p,\ell)$ near $(p,\ell)$ by the previous discussion.

Observe now that $\G_1$ acts transitively on $\bC(p,\ell)$ and therefore is a $\G_1$-space. Let $\ms H_{0}$ be the stabilizer of $C_0$. Then let  $\mk g$, $\mk g_1$ and $\mk h_{0}$ be the Lie algebras of $\G$, $\G_1$, and  $\ms H_{0}$ respectively. Recall that $\Ad(H)$ is real diagonalizable as an endomorphism of $\mk g$. Since $\Ad(H)$ preserves both $\mk g_1$ and $\mk h_{0}$, it follows that we can write $\mk g_1=\mk h_{0}\oplus V$, where $V$ is stable by $\Ad(H)$. By the first paragraph, all the eigenvalues of $\Ad(H)$  on $V$ are no greater than $1$. Moreover, by the previous paragraph, $\Ad(H)$ does not have $1$ as an eigenvalue on $V$. It follows that all the eigenvalues of $\Ad(H)$ on $V$ are less than $1$ and thus $\Ad(H)$ is a contracting endomorphism on $\bC(p,\ell)$. \end{proof}

We observe that as a corollary, $\bC(p,\ell)$ is contractible. We will also need
\begin{lemma}\label{lem:Hht} There is an  $H$-invariant submanifold $\Sigma$ containing $q$ and transverse to $C_0$.
\end{lemma}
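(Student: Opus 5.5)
The lemma asks for an $H$-invariant submanifold $\Sigma$ of $\bF$ through $q$, transverse to $C_0$ at $q$. The plan is to build it as an orbit of a unipotent subgroup normalized by $H$, using the fact that $H$ acts on $\T_q\bF$ by a real diagonalizable linear map whose $C_0$-direction is an eigenline.

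First, record the linear algebra at $q$. The point $q$ is fixed by $H$, since it is the attracting fixed point of $H^{-1}$, and also lies on $C_0$. Let $\mathsf P_q$ be the stabilizer of $q$ and $\mathsf U_q^-$ the unipotent radical of the opposite parabolic $\mathsf P_p$. Because $p$ and $q$ are transverse, the orbit map $\mathsf U_q^-\to\bF$, $v\mapsto v\cdot q$, is a diffeomorphism onto the open cell of flags transverse to $p$. This identifies $\T_q\bF$ with $\mk u^-\defeq\operatorname{Lie}(\mathsf U_q^-)$, equivariantly for $\Ad(H)$, since $H$ fixes both $p$ and $q$ and so normalizes $\mathsf U_q^-$. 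Now $\Ad(H)$ is real diagonalizable on $\mk g$, and it preserves $\mk u^-$. Moreover $\T_qC_0$ is $\Ad(H)$-stable, because $C_0$ is $H$-invariant. Concretely, $\T_qC_0$ is the image of the lower-triangular unipotent direction $\mk n\subset\mathfrak{sl}_2$ under $\d\iota$, and this is a root line for $\Ad(H)$.

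Next, choose the complement. Decompose $\mk u^-$ into $\Ad(H)$-eigenspaces, and pick an $\Ad(H)$-invariant complement $\mk s$ to the line $\d\iota(\mk n)$ inside $\mk u^-$. This is possible by diagonalizability: take the other eigenspaces, together with a complement of $\d\iota(\mk n)$ inside its own eigenspace. Then set
$$\Sigma\defeq \exp(\mk s)\cdot q\ .$$
The map $\mk u^-\to\mathsf U_q^-$ given by the exponential is a diffeomorphism, since the group is nilpotent and simply connected. Hence $\exp(\mk s)$ is an embedded submanifold of $\mathsf U_q^-$, and $\Sigma$ is an embedded submanifold of the open cell with $\T_q\Sigma=\mk s$.

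Finally, check the two required properties. Invariance follows from
$$H\exp(X)\cdot q=\exp(\Ad(H)X)\,H\cdot q=\exp(\Ad(H)X)\cdot q\ ,$$
together with $\Ad(H)\mk s=\mk s$. Transversality holds because $\mk s\oplus\T_qC_0=\T_q\bF$. Note that $\mk s$ need not be a subalgebra; this is harmless, since we only use $\exp$ as a diffeomorphism of $\mk u^-$ onto $\mathsf U_q^-$.

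The main obstacle is justifying that $\T_qC_0$ is exactly an $\Ad(H)$-eigenline inside $\mk u^-$ under the identification above. The point to argue is that $\iota$ maps the lower unipotent subgroup of $\psld$, which fixes $\infty$, into $\mathsf U_q^-$, and this needs care with which of $0,\infty$ corresponds to $p,q$. If that inclusion fails, the fallback is the $H$-equivariant chart given by the open cell: in a linearizing coordinate, take $\Sigma$ to be the invariant linear complement of $\T_qC_0$.
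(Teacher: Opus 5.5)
Your proof is correct, and it reaches the same underlying idea as the paper by a more intrinsic route. The paper linearizes $H$ extrinsically. It picks a representation $\G\to\mathsf{SL}_N(\R)$ with an equivariant embedding $\bF\hookrightarrow\mathbf P(\R^N)$. Since $H$ is $\R$-split, there is an $H$-invariant projective hyperplane missing $q$, and in the resulting affine chart centred at $q$ the map $H$ is linear and diagonalizable. It then takes an $H$-invariant linear hyperplane through $q$ transverse to $\T_qC_0$, and $\Sigma$ is its trace on $\bF$.

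You linearize inside $\bF$ itself. The big-cell chart $\Psi\colon X\mapsto\exp(X)\cdot q$ on $\mk u^-$ satisfies $H\circ\Psi=\Psi\circ\Ad(H)$, so your $\Sigma=\Psi(\mk s)$ is the image of an invariant linear hyperplane. What this buys you:
\begin{itemize}
\item $\Sigma$ is automatically an embedded hypersurface of $\bF$, in fact closed in the whole open cell, with $\T_q\Sigma=\d_0\Psi(\mk s)$;
\item no auxiliary representation is needed, and no check that a hyperplane section of $\bF$ is a submanifold.
\end{itemize}
What it costs is the structure theory of opposite parabolics: transverse points have opposite stabilizers, the unipotent radical of $\mathsf P_p$ acts simply transitively on the points transverse to $p$, and $\exp$ is a diffeomorphism onto it. The paper's route needs only a projective embedding and an eigenbasis of $\rho(H)$.

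The step you call the main obstacle is not needed, and you should drop it. You already observed the key fact. Let $L=(\d_0\Psi)^{-1}(\T_qC_0)\subset\mk u^-$. Then $L$ is $\Ad(H)$-invariant, because $C_0$ is $H$-invariant and $\Psi$ is $H$-equivariant. Hence $L$ lies in an eigenspace of the diagonalizable map $\Ad(H)\vert_{\mk u^-}$, and $\mk s$ should be taken as an invariant complement of $L$.

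The identification of $L$ with $\d\iota(\mk n)$ is not right as stated, for two reasons.
\begin{itemize}
\item The unipotent subgroup of $\psld$ that moves $\infty$ is the one fixing $0$, so its image fixes $p$. A subgroup fixing $\infty$ would lie in the stabilizer of $q$ and give no tangent vector at $q$.
\item Its Lie algebra maps into $\mk p_p=\mk l\oplus\mk u^-$, where $\mk l=\mk p_p\cap\mk p_q$. Nothing forces the $\mk l$-component to vanish. Only its class modulo $\mk p_q$, i.e.\ its $\mk u^-$-component, spans $L$.
\end{itemize}
Finally, your ``fallback'' is exactly the construction you had already written down, since $\Psi$ is itself a linearizing chart.
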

\begin{proof} We choose a representation $\rho$ of $\G$ in $\mathsf{SL}_N(\R)$, such that there exists an equivariant embedding of $\bF$ in $\mathbf P(\mathbb R^N)$. 

Let $\iota$ be the corresponding  representation of $\sld$ in  $\mathsf{SL}_N(R)$, let $f$ be a $\iota$-equivariant map from $\Rp$ to $\mathsf{SL}_N(R)$, $h$ be a generator of the diagonal group in $\sld$ and $H\defeq \iota(h)$. 

Since $H$ is $\mathbb R$-split, there exists a hyperplane invariant by $H$ that does not contain $q=f(\infty)$. Sending this hyperplane to $\infty$, we have a linear chart of $\mathsf{SL}_N(R)$ in which $H$ is linear and $\mathbb R$-split. Then, since the tangent line $\ell_0$ to $f(\Rp)$ at $\infty$ is $H$-invariant, there exists a hyperplane $V$ through $0$, which is transverse to $\ell$ and $H$-invariant. Then, the foliation $\cF$ by affine hyperplanes parallel to $V$ satisfies the required properties.
This concludes the proof.
\end{proof}

\subsubsection{The conformal factor}

Let $\gamma: \Rp \to \bF$ be a piecewise circle and write $g_\gamma = e^{2u}(\lambda g_0)$ where $\lambda$ is the weight of a circle and $g_0$ is the de Sitter metric on $\Rp\times\Rp\setminus\Delta$. We now prove several lemmas. 
\begin{lemma}\label{lem:uSclass0} The function $u$ satisfies the following properties
\begin{enumerate}
	\item The function $u$ is $C^0$,
	\item  there are finitely many horizontal and vertical lines outside of which $u$ is $C^\infty$,
	\item $u$ is zero on a neighborhood of $\partial \mathbf A\setminus W$, where $W$ is the set of turning points,
	\item  the distribution $\dal_g u$ is locally bounded.
\end{enumerate}
\end{lemma}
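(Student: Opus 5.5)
The plan is to work piece by piece, using the local structure of the crossratio metric. Let $t_1,\dots,t_m$ be the turning points of $\gamma$, which cut $\Rp$ into arcs $I_1,\dots,I_m$ with $\gamma_{\vert I_j}=c_j\circ\varphi_j$. Here $c_j$ is a super-positive circle and $\varphi_j$ is a Möbius reparametrization (the parametrization is irrelevant, by the reparametrization invariance of the crossratio metric). The crossratio metric $g_{\gamma,\omega}$ is the area form whose value on a diamond $\delta(x,y,X,Y)$ is $\log\bb_{\gamma,\omega}(x,y,X,Y)$. Its density at $(x,X)$ is the mixed derivative $\partial_x\partial_X\log \bb_{\gamma,\omega}(x_0,x,X_0,X)$, and this depends only on the germs of $\gamma$ near $x$ and near $X$. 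So on each open rectangle $I_j\times I_k$ minus the diagonal, the metric is the pullback of the crossratio metric of the pair of circles $(c_j,c_k)$.

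\textbf{Item (2).} On $I_j\times I_k$ the crossratio $\bb_{\gamma,\omega}(x,y,X,Y)$ is obtained by applying the algebraic formula for $\bb$, which comes from the dominant weight $\omega$ and is a ratio of pairings of highest-weight vectors, to points on the two real-algebraic curves $c_j$ and $c_k$ parametrized by Möbius maps. It is therefore real analytic in $(x,y,X,Y)$ wherever the points are transverse. Hence $u$ is $C^\infty$ off the finitely many lines $x=t_i$ and $y=t_i$.

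\textbf{Item (3).} Near a diagonal point $(x,x)$ with $x$ not a turning point, both coordinates lie in the same $I_j$, so the metric equals the circle metric $\lambda g_0$ there. Thus $u=0$ near $\partial\bA\setminus W$.

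\textbf{Item (1).} Continuity across the lines $x=t_i$ should come from $\gamma$ being $C^1$. The density is the mixed second derivative of $\log\bb$, which involves only first derivatives of $\gamma$ in each variable separately: it equals the pairing of $\gamma'(x)$ against $\gamma'(X)$ through the weight $\omega$, normalized by $\bb$-type factors at $(x,X)$. Since $\gamma$ is $C^1$ and $(\gamma(x),\gamma(X))$ stays transverse off the diagonal, this density is continuous on $\bA$. The density of $\lambda g_0$ is smooth and positive, so $u$ is $C^0$.

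\textbf{Item (4).} In isothermal coordinates, Lemma~\ref{lem:isothermal} gives $\dal_g u=2e^{-2v}\partial^2_{xy}u$ for a smooth conformal factor $v$, so it suffices to bound $\partial_x\partial_y u$ distributionally. On each rectangle $I_j\times I_k$, $u$ is smooth up to the boundary lines, since the pairs of circles extend analytically past them. The distributional mixed derivative is then the piecewise classical one plus possible singular parts supported on the lines. Because $u$ is continuous, $\partial_x u$ has no jump across horizontal lines $y=t_i$ (it is a tangential derivative there), and symmetrically for $\partial_y u$ across vertical lines. Hence $\partial^2_{xy}u$ has no singular part along the lines: each of $\partial_x(\partial_y u)$ and $\partial_y(\partial_x u)$ would only produce a delta from a jump in the normal direction, and for the mixed derivative the relevant jump is always tangential. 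At the finitely many crossing points there is no mass in two dimensions either. So $\dal_g u$ is locally bounded.

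The main obstacle is item (1), at points $(t_i,X)$: there we must check that the density really only sees $\gamma$ to first order, that is, that the mixed derivative of $\log\bb$ is continuous when $\gamma$ is merely $C^1$. I would verify this from the explicit highest-weight formula for $\bb_{\gamma,\omega}$, as in the $\mathbf{RP}^2$ example: $\log\bb$ is a sum $f(x,X)+f(y,Y)-f(x,Y)-f(y,X)$ with $f(t,s)=\log\langle\ell(s)\vert x(t)\rangle$. This gives density $\partial_t\partial_s f$, which is continuous when the curve is $C^1$ in each variable.
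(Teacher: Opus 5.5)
Your proposal is correct and follows the paper's route. Items (1)–(3) come from $\gamma$ being $C^1$ and piecewise a Möbius-parametrized circle, so the crossratio density only sees the $1$-jets of $\gamma$ at the two coordinates; item (4) is reduced, in local isothermal/flat coordinates, to bounding the distribution $\partial^2_{xy}u$. Your observation that continuity of $u$ rules out any singular part of $\partial^2_{xy}u$ along the lines $x=t_i$, $y=t_i$, because these lines are characteristic for $\partial^2_{xy}$, is exactly what justifies the paper's terse claim that $u$ being piecewise $C^2$ makes $\dal_{g_{flat}}u$ bounded. One small correction: Möbius reparametrizations are harmless because $g_0$ is Möbius-invariant, not because the crossratio metric is reparametrization invariant, since it transforms by $\Phi^*$.
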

\begin{proof} This is a consequence of the fact that $\gamma$ is $C^1$ and smooth outside the turning points. For the last statement, let $z$ be a point in $\bA$ and $g_{flat}$ a flat $(1,1)$ metric conformal to $g_0$ in the neighborhood of $z$. Since $u$ is piecewise $C^2$, it follows that $\dal_{g_{flat}}u$ is bounded in the neighborhood of $z$. Since 
$$
(\dal_{g_0}u) \omega_{g_0}=(\dal_{g_{flat}}u) \omega_{g_{flat}}\ ,
$$
the same can be said about $\dal_{g}u$. \end{proof}
\begin{corollary}\label{coro:VBu}
	There is a polygonal curve $P$ on which $\VB(u,P)$ is finite. 
\end{corollary}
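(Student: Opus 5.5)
We need a polygonal curve $P$ with $\VB(u,P)<\infty$. We use Lemma~\ref{lem:uSclass0}: $u$ is continuous, smooth off finitely many horizontal and vertical lines (those through turning points $W$), and vanishes on a neighborhood of $\partial\bA\setminus W$. The curve $P$ will be a polygonal curve avoiding the diagonal, with every vertex placed where $u$ is well controlled. The variation is then a finite sum plus contributions from segments lying in the region where $u$ is smooth or zero.

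First, let $w_1,\dots,w_m$ be the turning points in cyclic order on $\Rp$, and let $\Lambda$ be the finite union of the lines $\{x=w_j\}$ and $\{y=w_j\}$. Take any polygonal curve $P$ whose lightlike segments are transverse to $\Lambda$. This is automatic except for segments contained in $\Lambda$, and we choose $P$ so that none of its sides lies on one of these finitely many lines. Each side of $P$ is then a compact lightlike segment in $\bA$ that meets $\Lambda$ in finitely many points. Off these points, $u$ restricted to the segment is $C^\infty$ up to the endpoints of each subinterval. This needs a small additional check: $u$ is piecewise smooth, meaning smooth on the closed cells cut out by $\Lambda$. That follows from the proof of Lemma~\ref{lem:uSclass0}, since on each cell the curve $\gamma$ restricted to each factor is a single circle map, and $u$ is given there by an explicit smooth formula.

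Next, for any tuple $(\alpha_1,\ldots,\alpha_{2n})$ representing $P$, the points $\alpha_{2i-1},\alpha_{2i}$ lie on a common side of $P$. Hence
\[
\sum_i \vert u(\alpha_{2i-1})-u(\alpha_{2i})\vert
\leq \sum_{\text{sides } s} \int_s \vert \d u\vert ,
\]
where on each side we split at the finitely many points of $s\cap\Lambda$. This step uses the continuity of $u$ from item (1) of Lemma~\ref{lem:uSclass0}, so no jumps occur at those points. Each integral is finite because $u$ is $C^1$ on each closed subinterval and $P$ has finitely many sides. This bounds $\VB(u,P)$ by a finite total variation.

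The main obstacle is guaranteeing that $P$ is a genuine compact loop in $\bA$ on which $u$ has bounded derivative. The danger is that $u$ might fail to be $C^1$ up to $\Lambda$ from either side, for example because derivatives blow up near points of $\Lambda$ close to $\partial\bA$. Since $P$ is compact in $\bA$, it stays at positive distance from the diagonal, so only the behaviour of $u$ near interior points of $\Lambda$ matters. That behaviour is governed by the one-sided smooth formulas coming from the adjacent circle pieces. If one-sided smoothness up to $\Lambda$ turns out to be delicate, the fallback is to use item (3) of Lemma~\ref{lem:uSclass0}. We then push $P$ close to $\partial\bA$, so that all its vertices lie in the region where $u\equiv0$, except in small neighborhoods of the points $(w_j,w_j)$. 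In each such neighborhood $P$ makes a single staircase step, whose contribution is just a few values of the bounded function $u$.
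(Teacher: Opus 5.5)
Your argument is correct and is essentially the paper's: choose $P$ so that none of its sides lies on the finitely many horizontal or vertical lines where $u$ fails to be smooth. Then $u|_P$ is continuous and piecewise $C^1$, since $u$ is smooth up to the boundary of each closed cell, as you check, and is therefore of bounded variation. Your fallback in the last paragraph is unnecessary and slightly misstated: $\VB$ is a supremum over all representing tuples, which may subdivide sides, so even a single staircase step contributes the full variation of $u$ along it, not just a few of its values.
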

\begin{proof} We choose a polygonal curve that is transverse to the horizontal lines and vertical lines on which $u$ ceases to be $C^\infty$. Then $u$ restricted to $P$ is continuous and piecewise $C^1$. It follows that $u$ is of bounded variation on $P$ and the result follows. 
	\end{proof}
\subsubsection{The function $u$ on the neighborhood of a turning point}
Thanks to Corollary~\ref{coro:VBu} and Lemma~\ref{lem:uSclass0}, the theorem reduces to the following lemma.
\begin{lemma}\label{lem:uSclass} We have
\begin{enumerate}
	\item The functions $u$ and $\dal_g(u)$ tend to $0$ uniformly on $\partial \bA$.
	\item The distribution 
$\dal_g(u)$ belongs to $L^\infty(\bA,\d a_g)$.\end{enumerate} 
\end{lemma}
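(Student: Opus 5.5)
By Lemma~\ref{lem:uSclass0}, $u$ vanishes near every boundary point of $\bA$ other than the diagonal points $(w,w)$ with $w$ a turning point. Since there are finitely many turning points, both assertions are local: it suffices to fix a turning point $w$ and study $u$ and $\dal_g u$ on a small neighborhood $U_w$ of $(w,w)$ in $\Rp\times\Rp$. Away from such neighborhoods and away from $\Delta$, $u$ is continuous, $\dal_g u$ is locally bounded (Lemma~\ref{lem:uSclass0}(4)), and everything takes place in a compact region, so it contributes bounded terms. Both claims therefore reduce to uniform estimates as $(x,y)\to(w,w)$.

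Near $(w,w)$ I would normalize with the group action. By $\G$-invariance of the crossratio metric, move the circle $C_0$ that $\gamma$ follows on one side of $w$ so that it is the standard circle $\alpha$ with $\alpha(0)=p=\gamma(w)$; by reparametrization invariance, take $w=0$. On the other side, $\gamma$ follows another circle $C_1$. Since $\gamma$ is $C^1$ and super-positive, $C_1$ passes through $p$ with the same tangent line $\ell$, so $C_1\in\bC(p,\ell)$. The region $U_w$ then splits into four quadrants according to the signs of $x$ and $y$. On the two quadrants $x,y<0$ and $x,y>0$ the four points used in the crossratio density lie on a single circle, so there $g_\gamma$ is exactly a uniformizing metric for the corresponding Möbius parametrization. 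Lemma~\ref{lem:LinkWithSchwarzian} then gives $u=O((x-y)^2)$, and $\dal_g u = e^{2u}-1$, which also tends to $0$.

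The main obstacle is the mixed quadrant $x<0<y$ (and its symmetric counterpart), where $g_\gamma$ is computed from points on two different circles. Here I would use the dilation $H$. Lemma~\ref{lem:Attracting} shows that $H^{-n}C_1\to C_0$ exponentially in $\bC(p,\ell)$. Conjugating by $H^{n}$ rescales the coordinates $(x,y)\mapsto(\mu^n x,\mu^n y)$ and leaves $\lambda g_0$ invariant. The metric in a dyadic shell of the mixed quadrant near $(0,0)$ is therefore the pull-back of the metric of a fixed unit shell for the pair $(C_0,H^{-n}C_1)$. By the compactness of Lemma~\ref{lem:compact} and the transversal slice of Lemma~\ref{lem:Hht}, which provides smooth dependence on the circle, the conformal factor on the unit shell depends smoothly on the circle in $\bC(p,\ell)$ and vanishes when the circle equals $C_0$. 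Hence $u$ is $O(\kappa^n)$ on the $n$-th shell for some $\kappa<1$, and $\dal_{g_0}u$, being a scale-invariant quantity for $g_0$, is also $O(\kappa^n)$ there. Both therefore tend to $0$ uniformly as $(x,y)\to(0,0)$.

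On the dividing lines $x=0$ and $y=0$, $u$ is only piecewise smooth. Uniform boundedness of the distribution $\dal_g u$ comes from Lemma~\ref{lem:uSclass0}(4) on compact pieces and from the rescaling argument above, applied to shells that straddle the lines, near the corner. Combining the four quadrants and the finitely many turning points gives (1) and (2).
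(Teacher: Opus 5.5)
Your proposal is correct and follows essentially the same route as the paper. You localize at a turning point and observe that $u$ vanishes on the same-circle squares; in fact $u\equiv 0$ there exactly because circle maps are $\iota$-equivariant, so Lemma~\ref{lem:LinkWithSchwarzian} is not needed. On the mixed quadrant you then use the same three ingredients as the paper: the self-similarity $u_{H^{-1}C}=u_C\circ h$ on $h$-rescaled shells (the paper's L-shapes $L_k$), the contraction of $H^{-1}$ toward $C_0$ in $\bC(p,\ell)$ from Lemma~\ref{lem:Attracting}, and Lipschitz dependence of $u_C$ on the circle via the slice $\Sigma$ of Lemma~\ref{lem:Hht}. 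Your $O(\kappa^n)$ bound on each shell also yields the $\mathsf{L}^1$ bound on $\dal_g u$, which the paper obtains by summing $V_L(H^{-k}C)$.
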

In fact, it is enough to control the behavior of $u$ and $\dal_g u$ in the neighborhood of a turning point $w$. 
Since $\gamma$ is a piecewise $C^1$-circle, we can find a subdivision of  $\Rp$ into finitely many intervals
$$
\Rp=\bigcup_{i=0,\ldots, n}I_i\ \ \ , \ \ I_i=[a_i,a_{i+1}],
$$  
such that $\gamma_i\defeq \gamma\vert_{I_i}$ is a circle parameterization and we have
$$
\gamma_i(a_{i+1})=\gamma_{i+1}(a_{i+1})\ \ ,\ \  \dot\gamma_i(a_{i+1})=\dot\gamma_{i+1}(a_{i+1})\ .
$$
Since 
$$
K\defeq\bigcup_{\vert i-j\vert\geq 2}I_i\times I_j\ ,
$$
is compact in $\bA$, by Lemma \ref{lem:uSclass0}, $\dal_g u$ belongs to $L^1(K,\d a_{g})$ and  $L^\infty (K,\d a_{g})$. Moreover $u$ is zero on $I_i\times I_i\setminus \Delta$.
Thus, the lemma follows from  
\begin{lemma}
We have
\begin{enumerate}
		\item The function $u$ tends to $0$ uniformly as one converges to $a_i$,
	\item The distribution 
$\dal_g(u)$ belongs to $L^\infty(\bA_i,\d a_g)$ and $L^1(\bA_i,\d a_g)$, where $\bA_i=I_i\times I_{i+1}$.
\end{enumerate} 

\end{lemma}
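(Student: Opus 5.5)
We work near a single turning point $w=a_{i+1}$, on the block $\bA_i=I_i\times I_{i+1}$, where one coordinate $x$ lies in $I_i$ and the other $y$ lies in $I_{i+1}$. Throughout, $\gamma$ equals the circle $\gamma_i$ on the first factor and the circle $\gamma_{i+1}$ on the second. The two circles share the arrow at $w$: the same point $p=\gamma(w)$ and the same tangent direction $\ell$.

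\textbf{Step 1: reduce to a fixed circle $C_0$ and a comparison circle $D$.} Using the $\G$-invariance of the crossratio, we move $\gamma_i$ to a fixed circle $C_0$ parametrized by $\alpha$, with $\alpha(w)=p$. Reparametrizing by a Möbius map, which changes $u$ only by a function vanishing on the diagonal by Lemma~\ref{lem:LinkWithSchwarzian}, we may also assume $\gamma_{i+1}$ is a circle $D\in\bC(p,\ell)$ whose parametrization agrees with $\alpha$ to first order at $w$. Then on $\bA_i$
\[
e^{2u}(x,y)=\frac{\partial^2_{xy}\log \bb(x,y)}{\lambda\,\partial^2_{xy}\log[\cdot]},
\]
where $\bb$ is built from $\alpha$ on the $x$-side and from the parametrization of $D$ on the $y$-side. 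Concretely, $\bb_{\gamma,\omega}$ is computed from the $\omega$-representation as a ratio of pairings $\langle \xi(X)\,\vert\, x(t)\rangle$ of a "point" map $x$ and a "dual" map $\xi$. The conformal factor is therefore a smooth function of the pair (circle through $x$, circle through $y$) and of the two points.

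\textbf{Step 2: rescale with the contraction $H$.} Points $(x,y)\in\bA_i$ near $(w,w)$ are pushed toward scale one by $H^{-n}$, using Lemma~\ref{lem:Attracting} with $p$ repelling. The metric $g_\gamma$ is natural, so $u(x,y)$ equals the conformal factor for the pair $(C_0, H^{-n}D)$ evaluated at the rescaled points, which sit at $g_0$-bounded distance in a compact region. This uses Lemma~\ref{lem:compact} and the submanifold $\Sigma$ of Lemma~\ref{lem:Hht} to normalize the parametrizations. The key estimate comes from $\Vert \T_{C_0}H^{-1}\Vert<1$: the circle $H^{-n}D$ converges exponentially to $C_0$ in $\bC(p,\ell)$, that is, like $\mu^n$ with $\mu<1$. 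Since $u$ vanishes identically for the pair $(C_0,C_0)$, smoothness in the circle variable gives
\[
|u|+|\dal_g u|\leq C\,\mathrm{dist}(H^{-n}D,C_0)\leq C'\mu^n .
\]
This holds on the rescaled compact region, with constants uniform in $n$. Here $n\sim\log$ of the scale $|x-w|+|y-w|$, so we obtain $|u|,|\dal_g u|=O\big((|x-w|+|y-w|)^{\delta}\big)$ for some $\delta>0$. This proves item (1) and the $L^\infty$ bound.

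\textbf{Step 3: the $L^1$ bound.} Near $\Delta$ inside $\bA_i$, that is, near the corner $(w,w)$, the $g_0$-area is $2\,\d x\,\d y/(x-y)^2$. We decompose $\bA_i$ into dyadic regions $R_n$, each of which is an $H^{n}$-image of a fixed fundamental region $R_0$ of bounded $g_0$-area. This is possible because $H$ acts on $\bA_i$ by $g_0$-isometries preserving the corner. Step 2 then gives
\[
\int_{\bA_i}|\dal_g u|\,\d a_g\leq \sum_n C\mu^n\,\mathrm{Area}(R_0)<\infty .
\]
Near the other corners of $\bA_i$ (those at $a_i$ and $a_{i+2}$), $u$ vanishes in a neighborhood by Lemma~\ref{lem:uSclass0}(3).

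\textbf{Main obstacle.} The delicate point is Step 2. We must make the dependence of the conformal factor on the circle pair genuinely smooth and uniform on the rescaled compact set, including the fact that $u\equiv 0$ when $D=C_0$, so that the first-order contraction $\Vert\T_{C_0}H^{-1}\Vert<1$ really yields geometric decay. We would first try to write $u$ as an explicit function of $(\iota,\iota')$ via the pairing formula for $\bb$, and control it on the compact set given by Lemma~\ref{lem:compact}.
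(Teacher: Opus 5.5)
Your proposal is correct and follows essentially the paper's proof: the equivariance $u_{H^{-1}C}=u_C\circ h$ transports $u$ on the $k$-th fundamental domain near the turning point (the paper's $L$-shapes $L_k=h^k(L_0)$, your $R_n$) to a fixed compact domain for the circle $H^{-k}C$, and Lipschitz dependence on the circle together with $\Vert \T_{C_0}H^{-1}\Vert<1$ gives uniform decay of $u$ and $\dal_g u$ and a geometrically summable $L^1$ series. The only cosmetic difference is that you bound each term of the $L^1$ sum by a supremum times the ($H$-invariant) $g_0$-area of $R_0$, whereas the paper applies the Lipschitz estimate directly to the integral $V_L$.
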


\begin{proof}
Since the ordering is meaningless, we can focus on $i=0$.
After a Möbius change of parametrization of $\Rp$, we further restrict to the following situation: 
	\begin{enumerate}
	\item let $\gamma_0$ be the restriction, on an interval $[-c,0]$ with $c$ positive, of the parametrization $\alpha_0$ of a circle $C_0$,
	\item then let $q\defeq \alpha_0(\infty)$, let $h$ be a diagonal element in $\sld$, and let $H\defeq \iota_0(h)$ where $\iota_0$ is the embedding of $\sld$ associated with $\alpha_0$,
	\item let $\Sigma$ be the hypersurface of $\bF$, transverse to $\alpha_0$ and passing through $q$, invariant by $H$ (provided by Lemma \ref{lem:Hht}).
\end{enumerate}

Let $\mathcal O$ be the open neighborhood of $C_0$ in $\mathbf C(p,\ell)$ of circles that intersect $\Sigma$. We saw in Lemma \ref{lem:Attracting} that $H^{-1}$ acts on $\mathcal O$ and furthermore that $C_0$ is an attracting fixed point of $H^{-1}$ with 
$$
\Vert \T_{C_0}H^{-1}\Vert < 1\ .
$$
\vskip 0.2 truecm 
\noindent{\sc First step:} a function on  $\mathcal O$ 

We now parameterize each of the circles $C$ in $\mathcal O$ by $\alpha_C$ uniquely defined by 
$$
\alpha_C(0)=p\ ,\  \dot\alpha_C(0)=\dot\alpha_0(0)\ ,\  \alpha_C(\infty)\in\Sigma\ .
$$
From the uniqueness of the parametrization we have that 
\begin{align}
\alpha_{H^{-1}C}=H^{-1}\circ \alpha_C\circ  h\ .\label{eq:alphaC}	
\end{align}
For each $C$, we then have a $C^1$ (piecewise $C^2$) metric $g_C$ on $[-c,0]\times [0,\infty]\setminus\{(0,0)\}$ and a $C^1$ (piecewise $C^2$) function $u_C$ defined by 
$$
g_C=e^{2u_C}g_0\ .
$$
It then follows from equation \eqref{eq:alphaC}, that
\begin{align}
u_{H^{-1}C}=u_C\circ h\ . \label{eq:u_C}	
\end{align}
A final step in our construction is the choice once and for all $x$ and $w$ points in $\Rp$ with $x$ in $]-c,0[$ and $w$ in $]0,\infty[$. This allows us to define the $L$-shape
$$
L_k\defeq ([h^{k}(x),h^{k+1}(x)]\times [0,h^k(w)])\cup ([h^k(x),0]\times [h^{k+1}(w),h^{k}(w))\ ,
$$ 
and observe that \begin{align}
 L_k=h^k(L_0)\ . \label{eq:Lkh}	
 \end{align}

Observe that the intersection of $L_p$ and $L_k
$ has measure zero for $k$ different from $p$. Moreover
\begin{align}
\bigcup_{k=0}^\infty L_k=[x,0]\times[0,w]\setminus\{(0,0\}\eqdef A_0 .\label{eq:unioL}	
\end{align}

We now define the functions $U_L, V_L$ and $W_L$ on $\mathcal O$ by 
\begin{align}
	V_L(C)&\defeq\int_{L_0}\vert \dal_{g_C} u_C\vert  \ \omega_{g_C}\ ,\\
	U_L(C)\defeq\max_{L_0}(\vert u_C\vert)\ &, \
	W_L(C)\defeq\max_{L_0}(\vert \dal_{g_C} u_C\vert)\ .
	\end{align} 
\vskip 0.2 truecm
\noindent{\sc Second step:} We now prove that  the functions $V_L$,  $U_L$  and $W_L$ are Lipschitz on $\mathcal O$, and moreover,
\begin{align}
V_L(H^{-k}C)&=\int_{L_k}\vert \dal_{g_C} u_C\vert  \ \omega_{g_C}\ \ , \\ \ U_L(H^{-k}C)=\max_{L_k}(\vert u_C\vert)\ &, \ \ W_L(H^{-k}C)=\max_{L_k}(\vert \dal_{g_C} u_C\vert)\ .\label{eq:UWk}
\end{align}
The fact that the functions are Lipschitz follows from the fact that for any compact $K$ in $[-c,0]\times[0,\infty]$, $C\mapsto u_C$ is a smooth function with values in $C^\infty(K,\mathbb R)$. Let us check the final statement. By definition, setting $C_k=H^{-k}(C)$
\begin{align}
V_L(C_k)&=\int_{L_0}\vert\dal_{g_{C_k}} u_{C_k}\vert\cdot \omega_{g_{C_k}}=\int_{L_0}\vert \d (\d u_{C_k}\circ I)\vert \\ &=\int_{L_0}\vert\d (\d u_{C}\circ h^k\circ I)\vert=\int_{L_0}\vert\d (\d u_{C}\circ I\circ h^k)\vert=\int_{L_k}\vert\d (\d u_{C}\circ I)\vert\ .	
\end{align}
Here we used equation \eqref{eq:u_C} in the third equality and equation \eqref{eq:Lkh} in the last.  The assertion  \eqref{eq:UWk} that $U_L(H^{-k}C)=\max_{L_k}(\vert u_C\vert)$  follows from a similar proof. This concludes the proof of the second step.

\vskip 0.2 truecm
\noindent{\sc Final  step:} It follows from the previous step that $U_L(C_k)$ converges to $U_L(C_0)=0$ when $k$ goes to infinity. This implies that $\max_{L_k}\vert u\vert$ converges to zero when $k$ converges to infinity, and thus, $u_C$ converges uniformly to zero as one approaches 0. The same holds for $\max_{L_k}\vert \dal_{g_C}u_C\vert$. 

Moreover, by equation \eqref{eq:unioL}
$$
\int_{A_0}\vert \d (\d u_C\circ I)\vert =\sum_{k=0}^{\infty}\int_{L_k}\vert \d (\d u_C\circ I)\vert=\sum_{k=0}^{\infty}V_L(H^{-k}(C))\ .
$$

But for $C$ in a compact,  $\vert V_L(C_0)-V_L(C)\vert \leq K_1 d(C_0,C)$ since $V$ is Lipschitz.  It follows that for $k$ large enough, there exists a constant $K_2$ such that  
$$
\vert V(C_k)\vert \leq K_2\lambda^k\ ,
$$
since $V_L(C_0)=0$ and $\Vert \T_{C_0}H^{-1}\Vert<\lambda<1$. The result follows.
\end{proof}
\vskip 1 truecm
\begin{center}
{\sc Data availability statement:}\\	
There is no data associated to this manuscript.\\
\vskip 0.3 truecm
{\sc Conflict of interest statement:}\\
All authors declare that they have no conflicts of interest.
\end{center}
\bibliographystyle{amsplain}
\providecommand{\bysame}{\leavevmode\hbox to3em{\hrulefill}\thinspace}
\providecommand{\MR}{\relax\ifhmode\unskip\space\fi MR }
\providecommand{\MRhref}[2]{%
  \href{http://www.ams.org/mathscinet-getitem?mr=#1}{#2}
}
\providecommand{\href}[2]{#2}

\end{document}